\newtheorem{theorem}{Theorem}
\newtheorem{lemma}[theorem]{Lemma}
\newtheorem{proposition}[theorem]{Proposition}
\newtheorem{definition}[theorem]{Definition}
\newtheorem{remark}[theorem]{Remark}
\newcommand{\Subsection}[1]{\subsection{ #1} ${}^{}$}
\newcounter{hypo}
\def\C{{\mathbb C}}
\def\N{{\mathbb N}} 
\def\R{{\mathbb R}}
\def\CO{\mathcal {O}}
\def\one{{\mathchoice {\rm 1\mskip-4mu l} {\rm 1\mskip-4mu l} {\rm 1\mskip-4.5mu l} {\rm 1\mskip-5mu l}}}
 \def\im{\mathop{\rm Im}\nolimits}
\def\oph{\mathop{\rm Op}_{h}\nolimits}
\def\dive{\mathop{\rm div}\nolimits}
\def\supp{\mathop {\rm supp}\nolimits}
\def\ad{\mathop{\rm ad}\nolimits}
\def\<{\langle}
\def\>{\rangle}
\newcommand{\fract}[2]{\genfrac{}{}{0pt}{}{\scriptstyle #1}{\scriptstyle #2}}
\author[J.-F. Bony]{Jean-Fran\c{c}ois Bony}
\address{Jean-Fran\c{c}ois Bony, Institut de Math\'ematiques de Bordeaux, UMR 5251 du CNRS, Universit\'e de Bordeaux I, 351 cours de la Lib\'eration, 33405 Talence cedex, France}
\email{bony@math.u-bordeaux1.fr}
\author[D. H\"{a}fner]{Dietrich H\"{a}fner}
\address{Dietrich H\"{a}fner, Universit\'e de Grenoble 1, Institut Fourier, UMR 5582 du CNRS, BP 74, 38402 St Martin d'H\`eres, France}
\email{Dietrich.Hafner@ujf-grenoble.fr}
\title[Local energy decay]{Local energy decay for several evolution equations on asymptotically euclidean manifolds}
\keywords{Local energy decay, low frequencies, asymptotically Euclidean manifolds, Mourre theory}
\subjclass[2000]{35L05, 35J10, 35P25, 58J45, 81U30}
\begin{document}

\begin{abstract}
Let $P$ be a long range metric perturbation of the Euclidean Laplacian on $\R^d$, $d \geq 2$. We prove local energy decay for the solutions of the wave, Klein--Gordon and Schr\"odinger equations associated to $P$. The problem is decomposed in a low and high frequency analysis. For the high energy part, we assume a non trapping condition. For low (resp. high) frequencies we obtain a general result about the local energy decay for the group $e^{i t f(P)}$ where $f$ has a suitable development at zero (resp. infinity). 
\end{abstract}

\maketitle

\section{Introduction}

This paper is devoted to the study of the local energy decay for several evolution equations associated to long range metric perturbations of the Euclidean Laplacian on $\R^d$, $d \geq 2$. In particular, we show that the local energy for the wave (resp. Schr\"{o}dinger) equation decays like $\< t \>^{1-d + \varepsilon}$ (resp. $\vert t \vert^{- d/2} \< t \>^{\varepsilon}$). The restriction on the decays comes from the low frequency part, whose study constitutes the main part of the paper.

In the case of compactly supported (or exponentially decaying) perturbations, one of the most efficient approaches to prove local energy decay is the theory of resonances. For the wave equation outside some non trapping obstacles in odd dimension $\geq 3$, Lax and Phillips \cite{LaPh89_01} have obtained an exponential decay of the local energy. This has been generalized by Lax, Morawetz and Phillips \cite{LaMoPh63_01} to star-shaped obstacles and by Melrose and Sj\"{o}strand \cite{MeSj78_01} to non trapping obstacles. For general, non trapping, differential operators, Va{\u\i}nberg \cite{Va89_01} has obtained exponential (resp. polynomial) decay in odd (resp. even) dimensions. His proof rests on estimates of the cut-off resolvent in the complex plane. The theory of resonances can also be used to analyze the trapping situation; but, in this case, Ralston \cite{Ra69_01} has proved that there is necessarily a loss of derivatives in the estimates. We mention the work of Burq \cite{Bu98_01} without assumption on the trapped set, the work of Tang and Zworski \cite{TaZw00_01} for the resonances close the real line, the work of Christianson \cite{Ch09_01} for hyperbolic trapped sets with small topological pressure studied by Nonnenmacher and Zworski \cite{NoZw09_01} and the work of Petkov and Stoyanov \cite{PeSt10_01} outside several disjoint convex compact obstacles.

For slowly decaying perturbations, it is not clear how to use the theory of resonances. Instead, one can apply other methods (like resolvent estimates, perturbation theory, vector field methods, \ldots) giving typically polynomial decays. Jensen, Mourre and Perry \cite{JeMoPe84_01} and Hunziker, Sigal and Soffer \cite{HuSiSo99_01} have proved abstract local energy decays using Mourre theory. There is also a huge literature concerning the local energy decay for the Schr\"{o}dinger equation perturbed by a potential. We only mention here the works of Rauch \cite{Ra78_01} and of Jensen and Kato \cite{JeKa79_01}. Perturbation theory can also be applied for short range metric perturbations as in the work of Wang \cite{Wa06_01}. Schlag, Soffer and Staubach \cite{ScSoSt10_01,ScSoSt10_02} (see also the references therein) have considered radial short range perturbations of conical ends.

There has been important progress concerning the local energy decay for the wave equation in black hole type space-times. Finster, Kamran, Smoller and Yau \cite{FiKaSmYa06_01}, Tataru and Tohaneanu \cite{TaTo08_01}, Dafermos and Rodnianski \cite{DaRo08_01}, Andersson and Blue \cite{AnBl09_01} and Tataru \cite{Ta09_01} have proved various results in this direction for the Kerr metric which is, far away from the black hole, a long range perturbation of the Minkowski metric.

In dimension $3$, Tataru \cite{Ta09_01} has obtained a $\< t \>^{-3}$ local decay rate for some wave equations with long range perturbations which are radial up to short range terms. In our long range setting, Bouclet \cite{Bo10_01} has established estimates for various evolution equations with other decay rates. Note that he also obtained low frequency estimates for powers of the resolvent (see also Bouclet \cite{Bo08_01} and our work \cite{BoHa10_02} for estimates on the resolvent at low energy and Guillarmou and Hassell \cite{GuHa08_01,GuHa09_01} for a low frequency description of the resolvent using pseudodifferential calculus).

One can also consider evolution equations of higher order. For example, Ben-Artzi, Koch and Saut \cite{BeKoSa00_01} have established different dispersive estimates for the fourth order Schr\"{o}dinger groups $e^{i t ( \varepsilon \Delta + \Delta^{2} )}$ with $\varepsilon \in \{ -1 , 0 ,1 \}$. Moreover, Balabane \cite{Ba89_01} has obtained smoothing effects and local energy decays for evolution equations associated to an elliptic Fourier multiplier.

We consider the following operator on $\R^d$, with $d \geq 2$,
\begin{equation} \label{a5}
P= - b \dive ( G \nabla b ) = - \sum_{i,j=1}^{d} b(x) \frac{\partial \ }{\partial x_{i}} G_{i,j} (x) \frac{\partial \ }{\partial x_{j}} b (x) ,
\end{equation}
where $b(x)\in C^{\infty}(\R^d)$ and $G(x)\in C^{\infty}(\R^d;\R^{d\times d})$ is a real symmetric $d\times d$ matrix. The $C^{\infty}$ hypothesis is made mostly for convenience, much weaker regularity could actually be considered. We make an ellipticity assumption:
\begin{equation} \tag{H1} \label{a3}
\exists C>0, \ \forall x\in \R^d \qquad G(x)\geq C I_d \ \text{ and } \  b(x)\geq C, 
\end{equation}
$I_d$ being the identity matrix on $\R^{d}$. We also assume that $P$ is a long range perturbation of the Euclidean Laplacian:
\begin{equation} \tag{H2} \label{a4}
\exists \rho > 0 , \ \forall \alpha \in \N^d \qquad \vert \partial^{\alpha}_x(G(x)-I_d) \vert + \vert \partial^{\alpha}_x ( b(x) - 1 ) \vert\lesssim \<x\>^{-\rho-\vert\alpha\vert}. \\
\end{equation}

In particular, if $b=1$, we are concerned with an elliptic operator in divergence form $P = - \dive (G \nabla )$. On the other hand, if $G=(g^2 g^{i,j}(x))_{i,j},\, b=(\det g^{i,j})^{1/4},\, g=\frac{1}{b}$, then the above operator is unitarily equivalent to the Laplace--Beltrami $- \Delta_{\mathfrak{g}}$ on $(\R^d, \mathfrak{g})$ with metric
\[\mathfrak{g} = \sum_{i,j=1}^{d} g_{i,j} (x) \, d x^i \, d x^j ,\]
where $(g_{i,j})_{i,j}$ is inverse to $(g^{i,j})_{i,j}$ and the unitary transform is just multiplication by $g$. In the following, $\Vert \cdot \Vert$ will always design the norm on $L^{2} ( \R^{d} )$. We first obtain local energy decay estimates for several evolution equations at low frequency.

\pagebreak

\begin{theorem}\sl \label{th1}
Assume \eqref{a3}--\eqref{a4} and $d \geq 2$. For all $\chi \in C_0^{\infty}( \R )$ and $\varepsilon > 0$, we have

$i)$ for the wave equation
\begin{gather} \label{wave1}
\Big\Vert\<x\>^{1-d}\frac{\sin t\sqrt{P}}{\sqrt{P}}\chi(P) \<x\>^{1-d}\Big\Vert \lesssim \<t\>^{1-d+ \varepsilon},\\
\Big\Vert\<x\>^{-d}(\partial_t,\sqrt{P})\frac{\sin t\sqrt{P}}{\sqrt{P}}\chi(P) \<x\>^{-d}\Big\Vert \lesssim \<t\>^{-d+\varepsilon}. \label{wave2}
\end{gather}

$ii)$ for the Klein--Gordon equation
\begin{equation} \label{KG}
\big\Vert \<x\>^{-d/2}e^{it\sqrt{1+P}} \chi(P) \<x\>^{-d/2}\big\Vert \lesssim \<t\>^{-d/2+\varepsilon}.
\end{equation}

$iii)$ for the Schr\"odinger equation
\begin{equation} \label{Schr}
\big\Vert \<x\>^{-d/2}e^{itP} \chi(P) \<x\>^{-d/2}\big\Vert \lesssim \<t\>^{-d/2+\varepsilon}.
\end{equation}

$iv)$ for the fourth order Schr\"odinger equation
\begin{gather} \label{4Schr1}
\big\Vert \<x\>^{-d/2}e^{it(P+P^2)} \chi(P) \<x\>^{-d/2}\big\Vert \lesssim \<t\>^{-d/2+\varepsilon},\\
\label{4Schr2}
\big\Vert \<x\>^{-d/2}e^{itP^2} \chi(P) \<x\>^{-d/2}\big\Vert \lesssim \<t\>^{-d/4+\varepsilon}.
\end{gather}
\end{theorem}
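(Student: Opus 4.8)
The plan is to reduce all four evolution equations to a single abstract statement about the decay of $\langle x\rangle^{-s} e^{itf(P)}\chi(P)\langle x\rangle^{-s}$ for functions $f$ with a prescribed behavior near zero, and then to prove that abstract statement via a detailed analysis of the cut-off resolvent $\langle x\rangle^{-s}(P-z)^{-1}\langle x\rangle^{-s}$ at low energy. Concretely, write $e^{itf(P)}\chi(P)$ as a Stone-formula/Fourier integral over the spectrum, $e^{itf(P)}\chi(P) = \frac{1}{2i\pi}\int_0^\infty e^{itf(\lambda)}\chi(\lambda)\big[(P-\lambda-i0)^{-1}-(P-\lambda+i0)^{-1}\big]\,d\lambda$, and insert the weights $\langle x\rangle^{-s}$ on both sides. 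For $t$ large, the decay rate is governed by the interplay between the oscillation $e^{itf(\lambda)}$ near $\lambda=0$ and the regularity/size of the boundary values of the weighted resolvent there. The key inputs I would invoke are the low-frequency resolvent estimates of the authors' companion work \cite{BoHa10_02} (and Bouclet \cite{Bo10_01,Bo08_01}), namely that $\langle x\rangle^{-s}(P-z)^{-1}\langle x\rangle^{-s}$ extends continuously up to $\{z\geq 0\}$ near $z=0$ with appropriate bounds, together with bounds on its derivatives in $\lambda$ (limiting absorption principle with weights and a controlled expansion of the resolvent as $\lambda\to 0$, reflecting the $\langle x\rangle^{-\rho}$ long-range structure).

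The main technical engine is a stationary-phase / repeated-integration-by-parts argument in $\lambda$ on the dyadic pieces $\lambda\sim 2^{-k}$ near the bottom of the spectrum. On the dyadic shell $\lambda \sim \mu$, the phase derivative is $t f'(\lambda)$; for the wave case $f(\lambda)=\sqrt\lambda$ one has $f'(\lambda)\sim \mu^{-1/2}$, for Schrödinger $f(\lambda)=\lambda$ one has $f'\equiv 1$, for $f(\lambda)=\lambda^2$ one has $f'(\lambda)\sim\mu$, and Klein–Gordon $f(\lambda)=\sqrt{1+\lambda}$ has $f'(0)=\tfrac12\neq 0$ so it behaves like Schrödinger at low frequency. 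Each integration by parts gains a factor $(t f'(\mu))^{-1}$ but costs a $\lambda$-derivative on the weighted resolvent, which near $\lambda=0$ typically costs a negative power of $\mu$; balancing the number of integrations by parts against the shell width $\mu$ and summing the geometric series in $k$ produces the stated rates $\langle t\rangle^{1-d+\varepsilon}$, $\langle t\rangle^{-d/2+\varepsilon}$, $\langle t\rangle^{-d/4+\varepsilon}$. The loss of $\varepsilon$ is exactly the price of not having the endpoint (logarithmic) estimate and of the mismatch when $d$ is even; one chooses the weight exponents $s$ as in the statement ($1-d$, $d$, $d/2$) so that the weighted resolvent has enough regularity in $\lambda$ at zero for the argument to close. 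For the two-derivative wave estimate \eqref{wave2} one uses that applying $\partial_t$ or $\sqrt P$ to $\frac{\sin t\sqrt P}{\sqrt P}$ removes the $\lambda^{-1/2}$ singularity, which both improves the rate by one power of $t$ and allows the heavier weight $\langle x\rangle^{-d}$.

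The reduction step itself is routine but must be done carefully: for the wave equation one writes $\frac{\sin t\sqrt P}{\sqrt P}\chi(P)$ in terms of $e^{\pm it\sqrt P}g(P)$ with $g(\lambda)=\frac{\chi(\lambda)}{2i\lambda^{1/2}}$ (and a symmetric term), and checks the singular weight $\lambda^{-1/2}$ is absorbed by the analysis above; for the fourth-order equations one simply takes $f(\lambda)=\lambda+\lambda^2$ or $f(\lambda)=\lambda^2$ and notes $f+f^2$ still has nonvanishing first derivative at $0$ (so \eqref{4Schr1} has the Schrödinger rate), whereas $\lambda^2$ has a vanishing derivative (hence the worse $\langle t\rangle^{-d/4+\varepsilon}$ in \eqref{4Schr2}). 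I expect the main obstacle to be the bookkeeping at the threshold $\lambda=0$: one must know precisely how many times the weighted resolvent $\langle x\rangle^{-s}(P-\lambda\mp i0)^{-1}\langle x\rangle^{-s}$ can be differentiated in $\lambda$ with quantitative control of the blow-up rate as $\lambda\to 0^+$, and this is genuinely dimension-dependent (the structure differs for $d=2$, $d=3$, and $d\geq 4$, and between even and odd $d$). This is where the low-frequency resolvent expansion — presumably established in the body of the paper via Mourre theory with a low-energy conjugate operator — does the real work; granting it, the evolution estimates follow by the oscillatory-integral scheme sketched above, with the $\varepsilon$ absorbing all the borderline cases uniformly.
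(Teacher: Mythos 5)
Your proposal is a genuinely different route from the one the paper takes. The paper does not pass through the Stone formula, boundary values of the weighted resolvent, or stationary phase at all. Its strategy is entirely time-dependent and commutator-based: after the dyadic decomposition in frequency (which you also use), each piece $\varphi(\lambda P)$ is rewritten, via a rescaled and truncated function $\psi_\lambda$, so that $e^{itf(P)}\varphi(\lambda P)=e^{it\lambda^{-\alpha}\psi_\lambda(\lambda P)}\tau(\psi_\lambda(\lambda P))\varphi(\lambda P)$ with $\psi_\lambda(\lambda P)$ satisfying a Mourre estimate with conjugate operator the dilation generator $A$, uniformly in $\lambda$. The abstract Hunziker--Sigal--Soffer theorem (a minimal-escape-velocity propagation estimate, not a limiting absorption principle) then yields $\|\langle A\rangle^{-\mu}e^{it\psi_\lambda(\lambda P)}\tau(\psi_\lambda(\lambda P))\langle A\rangle^{-\mu}\|\lesssim\langle t\rangle^{-\mu}$ on each shell. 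The Hardy-type inequalities of Section~\ref{a1} are then used not to control $\partial_\lambda$-derivatives of $(P-\lambda\mp i0)^{-1}$ but to (a) bound the multi-commutators $\ad^j_A\psi_\lambda(\lambda P)$ uniformly in $\lambda$ and (b) convert $\langle A\rangle$-weights into $\langle x\rangle$-weights via $\|\langle A\rangle^\mu\varphi(\lambda P)\langle x\rangle^{-\mu}\|\lesssim\lambda^{-\min(\mu,d/2)/2+\varepsilon}$. Finally the dyadic series is summed, and the distinction between $\alpha\le1$ and $\alpha>1$ (which you correctly observe through $f'$) appears in the choice of weight exponent $d/(2\alpha)$ versus $d/2$.

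What each route buys: your resolvent-expansion/oscillatory-integral scheme is the classical one and, when carried out, gives very sharp information, but it shifts the entire difficulty onto establishing a quantitative $\lambda$-expansion of $\langle x\rangle^{-s}(P-\lambda\mp i0)^{-1}\langle x\rangle^{-s}$ with controlled derivatives down to $\lambda=0$ in a long-range, non-compactly-supported, metric-perturbation setting. You flag this yourself as the ``main obstacle,'' and this is exactly the gap in your proposal: that expansion is not proved anywhere in the paper and is not a consequence of the Mourre estimate alone (which only gives the first-order limiting absorption principle away from thresholds); it is genuinely hard and dimension-parity-sensitive, as you note. The paper's route sidesteps this entirely: it needs only (i) a uniform-in-$\lambda$ Mourre estimate for the rescaled operator and (ii) uniform bounds on finitely many multi-commutators, both obtained from the Hardy-type resolvent weight estimates, and then the time decay is delivered for free by the abstract HSS theorem. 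So while your outline correctly identifies the shell decomposition, the role of $f'(0)$ in setting the rate, the way the $\lambda^{-1/2}$ singularity of $\frac{\sin t\sqrt{P}}{\sqrt P}$ trades a power of $t$ for a power of $\lambda$ in \eqref{wave1}--\eqref{wave2}, and the source of the $\varepsilon$-loss, it is not a self-contained proof as written: its key input, a precise low-frequency resolvent expansion with controlled $\lambda$-derivatives, would itself require roughly the work of Bouclet's papers and is exactly what the Mourre-theoretic approach is designed to avoid.
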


The above theorem collects special cases of a more general theorem.
\begin{theorem}\sl \label{th4}
Assume \eqref{a3}--\eqref{a4} and $d \geq 2$. Let $f$ be a real function such that
\begin{equation*}
f (x) = a_{0} + a_{1} x^{\alpha} + x^{\alpha + \nu} g (x) ,
\end{equation*}
with $a_{1} \neq 0$, $\alpha , \nu > 0$ and $g \in C^{\infty} ( \R )$. Let $\chi \in C^{\infty}_{0} ( \R )$ be such that $f^{\prime} (x) > 0$ for all $x \in \supp \chi \cap ] 0 , + \infty [$.

$i)$ If $0 < \alpha \leq 1$, we have for all $\varepsilon >0$
\begin{equation} \label{b5i}
\big\Vert \< x \>^{-\frac{d}{2 \alpha}} e^{i t f (P)} \chi(P) \< x \>^{-\frac{d}{2 \alpha}} \big\Vert \lesssim \< t \>^{- \frac{d}{2 \alpha} + \varepsilon} .
\end{equation}

$ii)$ If $\alpha > 1$, we have for all $\varepsilon >0$
\begin{equation} \label{b5ii}
\big\Vert \< x \>^{- \frac{d}{2}} e^{i t f (P)} \chi(P) \< x \>^{- \frac{d}{2}} \big\Vert \lesssim \< t \>^{- \frac{d}{2 \alpha} + \varepsilon}.
\end{equation}
\end{theorem}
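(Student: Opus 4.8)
The plan is to reduce the statement to a low-energy resolvent estimate for $P$ and then run a stationary-phase / almost-analytic functional-calculus argument adapted to the degeneracy of $f$ at the origin. Since $\chi$ is compactly supported, only the part of the spectrum near $0$ and (possibly) a compact range away from $0$ matter; on $\supp\chi \cap [\delta,+\infty)$ the phase $f$ is a smooth diffeomorphism with $f'>0$, so there the usual non-stationary-phase estimates (using polynomially bounded weighted resolvent estimates away from zero, which follow from limiting absorption / Mourre theory as in the cited literature) give decay faster than any power of $\<t\>$ and contribute nothing. Hence one localizes spectrally to a small neighbourhood $[0,\delta]$ of $0$ with a cutoff $\psi(P)$, $\psi\in C_0^\infty$, and must prove
\[
\big\Vert \<x\>^{-\sigma} e^{itf(P)}\psi(P)\<x\>^{-\sigma}\big\Vert \lesssim \<t\>^{-d/(2\alpha)+\varepsilon},
\]
with $\sigma = d/(2\alpha)$ if $\alpha\le 1$ and $\sigma = d/2$ if $\alpha>1$.

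Next I would write $e^{itf(P)}\psi(P)$ via the Stone formula,
\[
e^{itf(P)}\psi(P) = \frac{1}{2i\pi}\int_0^\infty e^{itf(\lambda)}\psi(\lambda)\big( (P-\lambda-i0)^{-1} - (P-\lambda+i0)^{-1}\big)\,d\lambda,
\]
and insert the low-frequency resolvent bounds that are available in this setting: for the weighted resolvent $R(\lambda\pm i0) = \<x\>^{-s}(P-\lambda\mp i0)^{-1}\<x\>^{-s}$ with $s$ large enough, one has, for $\lambda$ near $0$, bounds of the form $\Vert \partial_\lambda^k R(\lambda\pm i0)\Vert \lesssim \lambda^{d/2-1-k}$ (up to the usual $\log$ or $\varepsilon$ losses, and with the expected modifications in even dimension / small $d$) together with Hölder continuity of the top derivative — these are exactly the low-energy estimates established in the authors' companion work and in Bouclet's papers cited above. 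The point is then a change of variables $u = f(\lambda)$, i.e. $\lambda = f^{-1}(u)$; since $f(\lambda) - a_0 \sim a_1\lambda^\alpha$, one has $\lambda \sim c\, (u-a_0)^{1/\alpha}$ near $u=a_0$, and the resolvent, re-expressed in the variable $u$, acquires a singularity $\sim (u-a_0)^{d/(2\alpha)-1}$ with correspondingly weighted derivative bounds. Integrating $e^{itu}$ against a symbol with this homogeneity and the stated weights yields, by the standard one-dimensional oscillatory-integral lemma (repeated integration by parts down to the level where the amplitude is merely Hölder, then estimating the remainder), the decay $\<t\>^{-d/(2\alpha)+\varepsilon}$. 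The two cases $\alpha\le 1$ and $\alpha>1$ differ only in how many weights $\<x\>^{-1}$ one can afford to spend: when $\alpha\le 1$ the map $\lambda\mapsto\lambda^\alpha$ is "expanding at $0$" so one needs the full strength $\sigma=d/(2\alpha)$ to absorb the Jacobian, while for $\alpha>1$ the change of variables is favourable and the milder weight $\<x\>^{-d/2}$ suffices even though the decay rate is still governed by $d/(2\alpha)$.

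The main obstacle, and where most of the technical work lies, is controlling the $\lambda\to 0^+$ endpoint of the Stone-formula integral: the amplitude $e^{itf(\lambda)}\psi(\lambda)\,[\text{weighted resolvent}]$ is only finitely differentiable there (its $k$-th $\lambda$-derivative blows up like $\lambda^{d/2-1-k}$), so one cannot integrate by parts indefinitely, and the number of admissible integrations — hence the final power of $t$ — is dictated precisely by the regularity threshold $d/2$ of the resolvent at zero, which after the substitution $u=f(\lambda)$ becomes the threshold $d/(2\alpha)$. Making this rigorous requires: (a) a careful dyadic decomposition $\lambda\sim 2^{-j}$ near the origin, estimating each piece by balancing the oscillation $t f'(\lambda)\sim t\lambda^{\alpha-1}$ against the size and number of derivatives of the amplitude, and then summing the geometric-type series in $j$ (this is where the $\varepsilon$ loss and any logarithmic factors are generated); (b) invoking the Hölder continuity of the borderline derivative of the resolvent to handle the critical dyadic scale $2^{-j}\sim t^{-1/\alpha}$; and (c) checking that the "away from zero" part of $\supp\psi$, where $f'$ could a priori vanish — but does not, by the hypothesis $f'>0$ on $\supp\chi\cap(0,\infty)$ — contributes only rapidly decaying terms, so that the cutoff $\psi$ can indeed be taken supported near $0$ without loss. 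Assembling (a)–(c) and combining with the trivial high-frequency-cutoff estimate gives both \eqref{b5i} and \eqref{b5ii}.
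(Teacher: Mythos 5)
Your approach via Stone's formula and oscillatory-integral estimates is genuinely different from the one in the paper. The paper performs a dyadic decomposition $\chi(P)=\sum_{\lambda\ \mathrm{dyadic}}\varphi(\lambda P)\chi(P)$, rescales each block to the fixed operator $\lambda P$, constructs for each $\lambda$ an auxiliary function $\psi_{\lambda}$ encoding the phase so that $e^{itf(P)}\varphi(\lambda P)=e^{it\lambda^{-\alpha}\psi_{\lambda}(\lambda P)}\tau(\psi_{\lambda}(\lambda P))\varphi(\lambda P)$, and then applies the abstract Hunziker--Sigal--Soffer minimal-escape-velocity theorem (Theorem~\ref{a16}, founded on a Mourre inequality for the pair $(\psi_{\lambda}(\lambda P),A)$ with $A$ the generator of dilations) to each block. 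The weight transfer from $\langle A\rangle$ to $\langle x\rangle$ and the uniform commutator bounds come from the Hardy-type estimates of Section~\ref{a1}, which involve only the resolvent $(\lambda P+1)^{-1}$ at a fixed spectral parameter \emph{off} $[0,+\infty)$; no limiting-absorption boundary values at low energy are required. Your route, by contrast, is essentially the one followed by Bouclet \cite{Bo10_01}: it needs weighted boundary-value derivative estimates $\Vert\langle x\rangle^{-s}\partial_{\lambda}^{k}(P-\lambda\mp i0)^{-1}\langle x\rangle^{-s}\Vert\lesssim\lambda^{d/2-1-k}$ near $\lambda=0^{+}$, plus H\"{o}lder continuity of the borderline derivative. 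These are substantial results not established in this paper (the companion paper \cite{BoHa10_02} gives only the $k=0$ case, and Section~\ref{a1} deliberately stays off the spectrum), so your proposal is not self-contained within the paper's toolbox. Conceptually the scheme is sound --- the change of variable $u=f(\lambda)$ turns the $\lambda^{d/2-1}$ spectral density into the $(u-a_{0})^{d/(2\alpha)-1}$ profile that yields $\langle t\rangle^{-d/(2\alpha)}$, and your heuristic for why $\sigma=d/(2\alpha)$ is needed when $\alpha\le 1$ but $\sigma=d/2$ suffices when $\alpha>1$ matches the role of the $\min(\mu,d/2)$ in Proposition~\ref{PM1}~$(iii)$ --- but one would also have to verify carefully that the weight $s$ supports exactly $\lceil d/(2\alpha)\rceil$ derivatives of the boundary-value resolvent, a delicate bookkeeping the Mourre route bypasses. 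In short, you trade the paper's elementary off-spectrum Hardy bounds plus abstract positive-commutator theory for the much heavier low-energy limiting-absorption machinery; both routes reach the same estimate, but the paper's choice is what makes the argument self-contained here.
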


We now give estimates which are global in energy. Since the Hamiltonian trajectories play a crucial role at high frequencies, the local energy decay depends on the geometry of these curves. In this paper, we will assume that
\begin{equation} \tag{H3} \label{a51}
P \text{ is non-trapping.}
\end{equation}
Under this assumption, we obtain local energy decay estimates for various evolution equations.

\begin{theorem}\sl \label{glest}
Assume \eqref{a3}--\eqref{a51} and $d \geq 2$. For all $\varepsilon > 0$, we have

$i)$ for the wave equation
\begin{gather} \label{wave3}
\Big\Vert \< x \>^{1-d} \frac{\sin t\sqrt{P}}{\sqrt{P}} u \Big\Vert_{H^1 ( \R^{d} )} \lesssim \< t \>^{1-d+ \varepsilon} \big\Vert \< x \>^{d-1} u \Vert ,    \\
\Big\Vert \< x \>^{-d} (\partial_t , \sqrt{P} ) \frac{\sin t \sqrt{P}}{\sqrt{P}} u \Big\Vert \lesssim \< t \>^{-d+ \varepsilon} \big\Vert \< x \>^d u \big\Vert . \label{wave4}
\end{gather}

$ii)$ for the Klein--Gordon equation
\begin{equation} \label{KG1}
\big\Vert \<x\>^{-d/2} e^{i t \sqrt{1+P}} u \big\Vert \lesssim \< t \>^{-d/2 + \varepsilon} \big\Vert \<x\>^{d/2} u \big\Vert .
\end{equation}

$iii)$ for the Schr\"odinger equation
\begin{equation} \label{Schr1}
\big\Vert \<x\>^{-d/2} e^{i t P} u \big\Vert \lesssim \vert t \vert^{-d/2} \< t \>^{\varepsilon} \big\Vert \<x\>^{d/2} u \big\Vert_{H^{-d/2} ( \R^{d} )} .
\end{equation}

$iv)$ for the fourth order Schr\"odinger equation
\begin{gather} \label{4Schr3}
\big\Vert \< x \>^{-d/2} e^{i t (P+P^2)} u \big\Vert \lesssim \vert t \vert^{-d/2} \< t \>^{\varepsilon} \big\Vert \<x\>^{d/2} u \big\Vert_{H^{- 3 d / 2} ( \R^{d} )} ,  \\
\label{4Schr4}
\big\Vert \< x \>^{-d/2} e^{i t P^2} u \big\Vert \lesssim \vert t \vert^{-d/2} \< t \>^{d/4 + \varepsilon} \big\Vert \< x \>^{d/2} u \big\Vert_{H^{- 3 d /2} ( \R^{d} )} .
\end{gather}
\end{theorem}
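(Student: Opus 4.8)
The plan is to reduce the global-in-energy estimates in Theorem~\ref{glest} to a combination of the low-frequency estimates of Theorems~\ref{th1}--\ref{th4} with a high-frequency analysis based on the non-trapping hypothesis \eqref{a51}. First I would fix $\chi_{0} \in C_{0}^{\infty}(\R)$ equal to $1$ near $0$ and split $1 = \chi_{0}(P) + (1-\chi_{0})(P)$. The low-frequency piece $\chi_{0}(P) u$ is handled directly by Theorems~\ref{th1}--\ref{th4}: for instance \eqref{wave1}--\eqref{wave2} give the contribution of $\chi_{0}(P)$ to \eqref{wave3}--\eqref{wave4}, and \eqref{KG}, \eqref{Schr}, \eqref{4Schr1}--\eqref{4Schr2} give the low-frequency contributions to \eqref{KG1}, \eqref{Schr1}, \eqref{4Schr3}--\eqref{4Schr4}; here one checks that the weights match because on the support of $\chi_{0}$ the function $f$ has the required expansion at $0$ (e.g. $f(x)=\sqrt{x}$ with $\alpha=1/2$ for the Klein--Gordon and damped-wave operators, $\alpha=1$ for Schr\"odinger and $P+P^{2}$, $\alpha=2$ for $P^{2}$), so \eqref{b5i}--\eqref{b5ii} apply and reproduce the stated $\langle t\rangle$-powers and Sobolev orders.

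The substantive new point is the high-frequency part $(1-\chi_{0})(P)u$. Here I would invoke the non-trapping condition: by the now-standard resolvent estimates for non-trapping long-range perturbations (Robert--Tamura, and Va\u\i nberg-type propagation-of-singularities / Egorov arguments), the cut-off resolvent $\langle x\rangle^{-s}(P-\lambda\mp i0)^{-1}\langle x\rangle^{-s}$ extends continuously to $\lambda \in [\lambda_{0},\infty)$ with the bound $O(\lambda^{-1/2})$, and moreover one has the semiclassical non-trapping estimate $\|\langle x\rangle^{-s} e^{-itP}(1-\chi_{0})(P)\langle x\rangle^{-s}\| = O(t^{-N})$ for the sharp truncation, or more precisely arbitrary polynomial decay at the cost of $H^{-N}$-type norms on the data. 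The cleanest route is: write $e^{itf(P)}(1-\chi_{0})(P)$ as a superposition, via an almost-analytic extension / Helffer--Sj\"ostrand formula, of the resolvent $(P-z)^{-1}$, and use that on the range of $1-\chi_{0}$ the phase $f$ is smooth with $f'>0$ bounded below, so that the stationary-phase / large-time behaviour is governed entirely by the high-energy resolvent bounds. Combining the $O(\lambda^{-1/2})$ resolvent estimate with a dyadic decomposition in $\lambda$ and integration by parts in time yields the $|t|^{-d/2}$ (respectively $\langle t\rangle^{1-d}$ for the wave energy, after also using the improved decay of $\langle x\rangle^{-d}(\partial_{t},\sqrt P)$) decay, with the loss of derivatives $H^{-d/2}$ or $H^{-3d/2}$ exactly accounting for the polynomial growth of $f(P)$ and of the resolvent powers at infinity.

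Concretely, for the Schr\"odinger group \eqref{Schr1} I would use the high-frequency dispersive estimate $\|\langle x\rangle^{-d/2} e^{itP}(1-\chi_{0})(P)\langle x\rangle^{-d/2}\|_{H^{-d/2}\to H^{0}} \lesssim |t|^{-d/2}$, which follows from the non-trapping semiclassical resolvent estimates by a $TT^{*}$/interpolation argument (this is essentially in Wang's work for short-range metric perturbations and extends to the long-range case under \eqref{a51}); similarly for $P+P^{2}$ and $P^{2}$ the extra factor $P$ or $P^{2}$ in the phase forces the additional $H^{-d}$ loss, giving $H^{-3d/2}$, and in the $P^{2}$ case the mismatch between the $|t|^{-d/2}$ decay rate natural for a quadratic phase at high energy and the $|t|^{-d/4}$ rate obtained at low energy produces the stated $\langle t\rangle^{d/4+\varepsilon}$ correction. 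For the wave and Klein--Gordon equations \eqref{wave3}--\eqref{KG1} one combines the non-trapping local energy decay for the wave equation (Va\u\i nberg, in the $\langle t\rangle^{1-d}$ form in even dimension, sharp in odd dimension) with the low-frequency Theorem~\ref{th1}, matching the weights $\langle x\rangle^{1-d}$, $\langle x\rangle^{-d}$, $\langle x\rangle^{-d/2}$ and the energy-space norms on both regimes.

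The main obstacle I expect is the uniform gluing of the two regimes at the transition energy $\lambda \sim 1$: one must ensure that the low-frequency estimates, which are stated with the cut-off $\chi(P)$ and with weights tuned to the exponent $\alpha$ of $f$ near $0$, are compatible with the high-frequency estimates, whose weights are the homogeneous $\langle x\rangle^{-d/2}$ (or $\langle x\rangle^{1-d}$) dictated by the Euclidean dispersion, and that the Sobolev losses are taken so that both pieces fit into a single inequality. The second delicate point is establishing the sharp high-frequency time decay under only the non-trapping assumption \eqref{a51} with long-range (not short-range) metric perturbations; this requires the long-range version of the non-trapping resolvent estimates and of the propagation results, together with care about the behaviour of the phase $f$ on $\supp(1-\chi_{0})$, in particular that $f'$ stays bounded below so no interior stationary points of the phase appear.
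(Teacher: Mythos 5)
Your overall structure --- separating low and high frequencies, applying Theorems \ref{th1}--\ref{th4} for the low-frequency cut-off, and invoking the non-trapping hypothesis \eqref{a51} for the high-frequency part --- is exactly the paper's, and your low-frequency bookkeeping (identifying $\alpha$ in $f$ near $0$ and reading off the weights and the powers of $\langle t\rangle$) is correct. The genuine difference is the high-frequency mechanism. The paper isolates it as a separate result, Theorem \ref{a28}, proved not through appeals to known resolvent or dispersive bounds but by a self-contained semiclassical Mourre argument: on each dyadic shell one rescales $e^{itf(P)}\varphi(h^2P)=e^{ith^{-2\alpha}Q_h}\varphi(h^2P)$ with $Q_h=h^{2\alpha}f(P)\widetilde\varphi(h^2P)$, constructs a conjugate operator $A_h=\oph (a)$ from a G\'erard--Martinez escape function, proves the uniform semiclassical Mourre estimate \eqref{a18}, and applies a uniform-in-$h$ version of the Hunziker--Sigal--Soffer theorem (Proposition \ref{a15}) to obtain $\|\langle x\rangle^{-\mu}e^{itQ_h}\varphi(Q_h)\langle x\rangle^{-\mu}\|\lesssim\langle ht\rangle^{-\mu}$; the dyadic summation over $h$ then produces the Sobolev loss $H^{\mu-2\alpha\mu}$ appearing in \eqref{a27}. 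Your alternative route (Robert--Tamura high-energy resolvent bounds, Helffer--Sj\"ostrand, stationary phase, $TT^{*}$ interpolation) is plausible in principle, but you do not carry it out, and it contains a small imprecision: with a \emph{fixed} spatial weight $\langle x\rangle^{-s}$ the non-trapping commutator machinery gives decay $\langle t\rangle^{-s}$, not $O(t^{-N})$ for arbitrary $N$ --- arbitrarily fast decay requires letting the weight grow. Likewise the Sobolev loss in \eqref{a27} is not an independent knob buying extra time decay; it is the accounting of the $h^{1-2\alpha}$ factors in the dyadic sum. So your outline and your reading of how the exponents combine (including the $\langle t\rangle^{d/4+\varepsilon}$ correction in \eqref{4Schr4}) are correct, but the substantive technical content --- a uniform high-frequency local decay estimate for $e^{itf(P)}$ valid under only \eqref{a3}--\eqref{a51} --- is left to references and folklore, whereas it is exactly what the paper constructs in Section \ref{a53}.
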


\begin{remark}\rm
$i)$ In even dimensions, \eqref{wave3} and \eqref{wave4} are optimal modulo the loss of $\< t \>^{\varepsilon}$. Indeed, the fundamental solution of the wave equation on the Minkowski space is explicitly known and a better estimate is not possible (see e.g. Section 3.5 of \cite{Ta96_01}).

$ii)$ The type of decay we obtain for the wave equation does not depend on the parity of the dimension. This is not the case on the Minkowski space since the strong Huygens principle assures that the local energy decays as much as we want for $d \geq 3$ odd. For compactly supported perturbation, the theory of resonances gives an exponential decay (see e.g. Vainberg \cite{Va89_01}). The difference with our results is that, roughly speaking, we only use upper bounds on the kernel of the resolvent (which do not depend on the parity of the dimension) and not analytic properties (only valid in odd dimensions).

$iii)$ The decays that we obtain globally are limited by the best possible decays for the low frequency part given by Theorem \ref{th1}. But, outside of the low frequencies, better decays follow from the estimates \eqref{a26} and \eqref{a27} below which hold in all dimension $d \geq 1$.

$iv)$ It is perhaps also possible to deal with some trapping situations for which the high frequency behavior of the evolution is well understood. Note that, in this case, there will necessarily be a loss of derivatives. For example, one can hope to remove the assumption that the perturbation is compactly supported in the work of Christianson \cite{Ch09_01} to obtain polynomial local energy decay (limited by the decay at low frequency) for hyperbolic trapped sets.
\end{remark}

Theorem \ref{glest} comes from Theorem \ref{th1} and the following general result at high frequency.
\begin{theorem}\sl \label{a28}
Assume \eqref{a3}--\eqref{a51} and $d \geq 1$. Let $f$ be a real function such that, for $x \geq 1$,
\begin{equation*}
f (x ) = x^{\alpha} + x^{\alpha - \nu} g (x),
\end{equation*}
with $\alpha , \nu >0$ and $g \big( \frac{1}{x} \big) \in C^{\infty} ( [0, 1 [ )$.

$i)$ For all $\varphi \in C^{\infty}_{0} ( ] 0 , + \infty [)$ and $\mu \geq 0$, we have
\begin{equation} \label{a26}
\big\Vert \<x\>^{- \mu} e^{i t f (P)} \varphi ( h^{2} P ) \<x\>^{- \mu} \big\Vert \lesssim \big\< t h^{1 - 2 \alpha} \big\>^{- \mu} ,
\end{equation}
uniformly for $h>0$ small enough and $t \in \R$.

$ii)$ For all $\chi \in C^{\infty}_{0} ( \R )$ equal to $1$ on a sufficiently large neighborhood of $0$ and $\mu \geq 0$,
\begin{equation} \label{a27}
\big\Vert \<x\>^{- \mu} e^{i t f(P)} ( 1 - \chi ) (P) u \big\Vert_{L^{2} ( \R^{d} )} \lesssim
\left\{ \begin{aligned}
&\< t \>^{- \mu} \big\Vert \< x \>^{\mu} u \big\Vert_{H^{\mu - 2 \alpha \mu} ( \R^{d} )} &&\text{for } \alpha \leq 1/2 , \\
&\vert t \vert^{- \mu} \big\Vert \< x \>^{\mu} u \big\Vert_{H^{\mu - 2 \alpha \mu} ( \R^{d} )} &&\text{for } \alpha > 1/2 .
\end{aligned} \right.
\end{equation}
\end{theorem}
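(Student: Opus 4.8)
The plan is to reduce everything to a semiclassical statement via the standard rescaling $h^2 P$, so that the high-frequency behavior is governed by a non-trapping semiclassical Schr\"odinger operator, and then to combine a non-stationary phase (Egorov / propagation of singularities) argument with the known non-trapping resolvent estimates. First I would treat part $i)$. Fix $\varphi \in C^\infty_0(]0,+\infty[)$ and write $f(P) = h^{-2\alpha} f_h(h^2 P)$ where $f_h(y) = h^{2\alpha} f(h^{-2} y) = y^\alpha + h^{2\nu} y^{\alpha-\nu} g(h^{-2}y)$; the hypothesis $g(1/x) \in C^\infty([0,1[)$ is exactly what makes $f_h$ a classical symbol in $y$ on $\supp\varphi$, uniformly in $h$, with $f_h \to y^\alpha$ and all derivatives converging. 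Thus $e^{itf(P)}\varphi(h^2 P) = e^{i(th^{-2\alpha}) f_h(h^2 P)}\varphi(h^2 P)$, so setting $s = t h^{1-2\alpha}$ (note the extra $h$ comes out when we also rescale space, see below) reduces the claim to a dispersive estimate for the semiclassical group $e^{i s h^{-1} f_h(h^2 P)}$ truncated by $\varphi(h^2 P)$, with weights $\langle x\rangle^{-\mu}$. Because $P$ is non-trapping, so is the semiclassical principal symbol $G(x)\xi\cdot\xi$ at energies in $\supp\varphi$, and on the support of $\varphi$ the phase function $f_h$ has $f_h' > 0$, so the rescaled Hamilton flow of $h^{-1} f_h(G(x)\xi\cdot\xi)$ is a time-reparametrization of the non-trapping geodesic flow and still sends points to infinity. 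The key microlocal input is then the standard propagation estimate: for a non-trapping flow, $\langle x\rangle^{-\mu} U(s) \varphi(h^2 P)\langle x\rangle^{-\mu}$ decays like $\langle s/h \cdot h \rangle^{-\mu} = \langle s\rangle^{-\mu}$ in the escaping regime, the point being that after rescaling $x = h \tilde x$ is the natural spatial variable, which converts the $\langle x\rangle^{-\mu}$ weights into a gain of $\langle t h^{1-2\alpha}\rangle^{-\mu}$. I would obtain this by interpolating the trivial $L^2$ bound with an integer-$\mu$ estimate proved by commuting powers of $\langle x\rangle$ through $U(s)$ and using the non-trapping control of $\langle x\rangle^{-1} U(s) \langle x\rangle^{-1}$ (an Egorov-type argument showing the commutator gains a power of the time, uniformly in $h$, because trajectories leave every compact set in bounded time). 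This is essentially the well-known high-energy dispersive mechanism; the only care needed is the uniformity in $h$ and the bookkeeping of the two rescalings (energy and space) that together produce $h^{1-2\alpha}$.

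For part $ii)$ I would integrate the estimate \eqref{a26} over a dyadic decomposition in frequency. Choose $\varphi \in C^\infty_0(]0,+\infty[)$ with $\sum_{j\geq 0}\varphi(2^{-2j} \cdot) = (1-\chi)$ on $[1,\infty)$ (for $\chi$ equal to $1$ near $0$ on a large enough set so that the sum only involves $P \gtrsim 1$, where $f$ has the stated expansion). Applying \eqref{a26} with $h = 2^{-j}$ gives, for each dyadic piece $\varphi(2^{-2j}P)u$,
\begin{equation*}
\big\Vert \langle x\rangle^{-\mu} e^{itf(P)}\varphi(2^{-2j}P) u\big\Vert \lesssim \big\langle t\, 2^{-j(1-2\alpha)}\big\rangle^{-\mu} \big\Vert \langle x\rangle^{\mu}\varphi(2^{-2j}P) u\big\Vert \lesssim \big\langle t\, 2^{-j(1-2\alpha)}\big\rangle^{-\mu} 2^{j(\mu-2\alpha\mu)}\big\Vert \langle x\rangle^{\mu}\langle P\rangle^{(\mu-2\alpha\mu)/2} u\big\Vert,
\end{equation*}
where I absorbed $2^{2j\cdot (\text{power})}$ using that $\varphi(2^{-2j}P)$ localizes $P \sim 2^{2j}$, and commuted $\langle x\rangle^\mu$ past the spectral cutoff at the cost of a harmless multiplier (bounded uniformly in $j$ by pseudodifferential calculus, since $[\langle x\rangle^\mu, \varphi(2^{-2j}P)]\langle x\rangle^{-\mu}$ is uniformly bounded). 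Now one sums over $j \geq 0$: when $\alpha \leq 1/2$ the factor $\langle t\,2^{-j(1-2\alpha)}\rangle^{-\mu}$ decays in $j$ for $|t|\geq 1$ and one gets geometric convergence with sum $\lesssim \langle t\rangle^{-\mu}$, matching the Sobolev exponent $\mu - 2\alpha\mu$; when $\alpha > 1/2$ the exponent $1-2\alpha < 0$, so the roles flip and the large-$j$ (high-frequency) terms are the fastest-decaying in $t$ — one splits the sum at $2^{j}\sim |t|^{1/(2\alpha-1)}$ and checks that both pieces are $\lesssim |t|^{-\mu}$, which gives the stated $|t|^{-\mu}$ with the same Sobolev loss. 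The $H^{\mu-2\alpha\mu}$ norm is precisely what makes the $j$-summation converge, so the Sobolev index is forced by this computation.

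The main obstacle I expect is part $i)$: proving the dispersive bound \eqref{a26} \emph{uniformly in the small parameter $h$} when the phase is $f_h(G(x)\xi\cdot\xi)$ rather than the clean $(G(x)\xi\cdot\xi)^\alpha$. One must check that the lower-order term $h^{2\nu} y^{\alpha-\nu} g(h^{-2}y)$ and its derivatives are genuinely negligible perturbations of the symbol class, uniformly down to $h = 0$, which is where the hypothesis $g(1/x)\in C^\infty([0,1[)$ is used in an essential way; without it the rescaled phase would not be a nice symbol. A secondary technical point is establishing the non-trapping propagation estimate with weights and with the correct power of $h$ in the gain: this requires an Egorov argument valid up to Ehrenfest-type times, or more simply a direct commutator/positive-commutator argument exploiting that the escape function for a non-trapping flow can be chosen uniformly, so that $\langle x\rangle^{-1}$ times the group times $\langle x\rangle^{-1}$ gains an $h$-independent power of $(1 + \text{time}/h)$ once the space variable is rescaled by $h$. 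Everything else — the spectral-cutoff commutator estimates, the dyadic summation, converting between $\langle P\rangle$-Sobolev norms and the operator $P$ (legitimate by ellipticity \eqref{a3} and \eqref{a4}) — is routine.
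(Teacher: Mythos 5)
Your overall strategy for part $i)$ is the same in spirit as the paper's: reduce to a semiclassical propagator $e^{it h^{-2\alpha}Q_h}\varphi(h^2P)$ with $Q_h = k_h(h^2P)$ an $h$-pseudodifferential operator uniformly elliptic on $\supp\varphi$, construct an escape function for the non-trapping flow, and extract a weighted propagation estimate. The paper however runs this through Mourre theory: it builds a conjugate operator $A_h = \Op_h(a)$ via a G\'erard--Martinez escape function, proves $\ad^j_{A_h}Q_h = \CO(h)$ and a semiclassical Mourre estimate $\one_J(Q_h)\,i[Q_h,A_h]\,\one_J(Q_h)\geq \varepsilon h\,\one_J(Q_h)$, and then invokes the minimal-escape-velocity theorem of Hunziker--Sigal--Soffer to get $\Vert\<x\>^{-\mu}e^{itQ_h}\varphi(Q_h)\<x\>^{-\mu}\Vert\lesssim\<ht\>^{-\mu}$. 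Your ``commute $\<x\>$ through $U(s)$ and interpolate'' Egorov sketch is plausible in outline, but note that the extra factor of $h$ giving $\<th^{1-2\alpha}\>^{-\mu}$ comes from the Mourre constant being of size $h$, not from a spatial rescaling $x=h\tilde x$; the weights $\<x\>^{-\mu}$ are never rescaled.

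Part $ii)$ has a genuine gap. First, a sign slip: on the block $\varphi(2^{-2j}P)$ one has $P\sim 2^{2j}$, so the Bernstein-type step yields $\Vert\<x\>^\mu\varphi(2^{-2j}P)u\Vert\lesssim 2^{-j(\mu-2\alpha\mu)}\Vert\<x\>^\mu\<P\>^{(\mu-2\alpha\mu)/2}u\Vert$, with a \emph{negative} exponent $-j(\mu-2\alpha\mu)$, not $+j(\mu-2\alpha\mu)$ as you wrote. Second, and more seriously, the $\ell^1$ triangle-inequality summation over the dyadic blocks does not close. Even with the corrected sign the per-block bound is $\<t\,2^{-j(1-2\alpha)}\>^{-\mu}\,2^{-j(\mu-2\alpha\mu)}\Vert\cdot\Vert_{H^{\mu-2\alpha\mu}}$, and setting $s_j=2^{-j(1-2\alpha)}$ the $\ell^1$ sum is $\sum_j\<ts_j\>^{-\mu}s_j^\mu$: for $\alpha=1/2$ every term equals $\<t\>^{-\mu}$ and the sum diverges; for $\alpha<1/2$ or $\alpha>1/2$ one gets $\sim\vert t\vert^{-\mu}\log\vert t\vert$ or worse, whereas the theorem asserts no log loss. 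Your proposed split at $2^j\sim\vert t\vert^{1/(2\alpha-1)}$ does not help, since the tail still contains infinitely many terms each comparable to $\vert t\vert^{-\mu}$. The paper avoids this by first absorbing $\<h^{1-2\alpha}t\>^{-\mu}\leq h^{2\alpha\mu-\mu}\<t\>^{-\mu}$ (resp.\ $\vert t\vert^{-\mu}$ when $\alpha>1/2$) so that the $h$-dependence is entirely transferred to the spectral multiplier $\<P\>^{\mu/2-\alpha\mu}\varphi(h^2P)$, and then summing in $\ell^2$ using the almost-orthogonality of the $\varphi(h^2P)$'s, via $\sum_h\Vert\varphi(h^2P)v\Vert^2\leq\Vert\sum_h\varphi(h^2P)v\Vert^2\lesssim\Vert v\Vert^2$ for $\varphi\geq 0$. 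This Littlewood--Paley $\ell^2$ step is essential; replacing it by the $\ell^1$ sum you propose is not a cosmetic change but breaks the argument.
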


\begin{remark}\rm
The proof of Theorem \ref{a28} rests on a semiclassical argument which can be used in other situations. For example, Proposition \ref{a15} gives another proof of one of the results obtained by Wang in \cite{Wa87_01}: Let $P_{h} = - h^{2} \Delta + V(x)$ be a semiclassical Schr\"{o}dinger operator with a potential $V$ satisfying $\vert \partial_{x}^{\alpha} V (x) \vert \lesssim \< x \>^{- \rho - \vert \alpha \vert}$ for some $\rho >0$ and all $\alpha \in \N^{d}$. Assume that $[ a , b ] \subset ] 0 , + \infty [$ is an interval of non trapping energy for $p = \xi^{2} + V (x)$. Then, for all $\varphi \in C^{\infty}_{0} ( [ a ,b ])$ and $\mu \geq 0$, we have
\begin{equation*}
\big\Vert \< x \>^{- \mu} e^{i t P_{h} /h} \varphi ( P_{h} ) \< x \>^{- \mu} \big\Vert \lesssim \< t \>^{- \mu} .
\end{equation*}
uniformly for $t \in \R$.
\end{remark}

To prove the different local energy decays, we use an abstract result of Hunziker, Sigal and Soffer \cite{HuSiSo99_01} based on a Mourre estimate. Then we have to make a specific study of the commutator estimates at low and high frequencies. The low energies are treated in Section \ref{a52} using Hardy type estimates. The high frequency results follow from a local energy decay for general semiclassical operators, using only commutators estimates, proved in Section \ref{a53}. Eventually, Section \ref{a1} collects low frequency resolvent estimates generalizing those of \cite[Appendix B]{BoHa10_01}.

\section{Low frequency estimates} \label{a52}

\Subsection{Abstract setting}

In this section we recall a theorem of Hunziker, Sigal and Soffer \cite{HuSiSo99_01} that will be used in the following. To do so, we have to recall the notion of regularity with respect to an operator. A full presentation of this theory can be found in the book of Amrein, Boutet de Monvel and Georgescu \cite{AmBoGe96_01}.

\begin{definition}\sl
Let $(A, D(A))$ and $(H,D(H))$ be self-adjoint operators on a separable Hilbert space ${\mathcal H}$. The operator $H$ is of class $C^{k} (A)$ for $k >0$, if there is $z \in \C \setminus \sigma ( H)$ such that
\begin{equation*}
\R \ni t \longrightarrow e^{i t A} (H-z)^{-1} e^{- i t A},
\end{equation*}
is $C^{k}$ for the strong topology of ${\mathcal L} ( {\mathcal H} )$.
\end{definition}

Let $H \in C^{1} (A)$ and $I \subset \sigma (H)$ be an open interval. We assume that $A$ and $H$ satisfy a Mourre estimate on $I$:
\begin{equation} \label{ME}
\one_{I} (H) i [ H , A ] \one_{I} (H) \geq \delta \one_{I} (H) ,
\end{equation}
for some $\delta >0$. As usual, we define the multi-commutators $\ad_{A}^{j} B$ inductively by $\ad_{A}^{0} B =B$ and $\ad_{A}^{j+1} B = [ A , \ad_{A}^{j} B ]$.

In the sequel, we will need a version of a result of \cite{HuSiSo99_01} which holds uniformly in the operators $A$ and $H$. Following the different constants in that paper, we obtain the result below supposing that the Mourre estimate \eqref{ME} is satisfied uniformly (i.e. the constant $\delta >0$ and the interval $I$ are fixed).

\begin{theorem}[Hunziker, Sigal, Soffer]\sl \label{a16}
Let $\mu >0$ and $\overline{\mu} = \min \{ n \in \N ; \, n > \mu +1 \}$. Let $H,A$ be two self-adjoint operators such that $H \in C^{\overline{\mu}} (A)$, that the Mourre estimate \eqref{ME} holds and that the commutators $\ad^{j}_{A} H$ are bounded for $1 \leq j \leq \overline{\mu}$. Then, for all $\chi \in C_0^{\infty} ( I )$,
\begin{equation*}
\big\Vert \< A \>^{- \mu} e^{i t H} \chi (H) u \big\Vert \leq  P_{\mu , \chi} \big( \Vert \ad^1_A H \Vert, \ldots , \Vert \ad^{\overline{\mu}}_A H \big\Vert \big) \< t \>^{- \mu} \big\Vert \<A\>^{\mu} u \big\Vert , 
\end{equation*}  
where $P_{\mu , \chi}$ is a polynomial.
\end{theorem}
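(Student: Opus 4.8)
The plan is to follow the argument of Hunziker, Sigal and Soffer \cite{HuSiSo99_01} but to track carefully the dependence of every constant on the norms $\Vert \ad^j_A H \Vert$ for $1 \le j \le \overline{\mu}$, keeping the Mourre constant $\delta$ and the interval $I$ fixed throughout. The starting point is the standard propagation estimate machinery: one introduces, for a fixed $\chi \in C^\infty_0(I)$, the family of observables $\Phi_R(s) = \chi(H)\, F\big( (A - vt)/R \big)\, \chi(H)$ (or a smooth functional-calculus variant adapted to a conic region in phase space), and studies the Heisenberg derivative $D\Phi = \partial_t \Phi + i[H,\Phi]$. The Mourre estimate \eqref{ME}, localized by $\chi(H)$, forces a favorable sign for the principal term of $D\Phi$ on the relevant energy window, while the error terms are controlled by $\ad^j_A H$ for $j$ up to $\overline{\mu}$; this is exactly the step where $H \in C^{\overline{\mu}}(A)$ and the boundedness of the iterated commutators enter. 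Integrating the differential inequality and iterating over successively higher powers of $\< A \>$ yields the weighted $\< t \>^{-\mu}$ decay, with each iteration contributing one more factor involving the commutator norms — hence the polynomial $P_{\mu,\chi}$ in the conclusion.

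Concretely, I would proceed in the following steps. First, reduce to showing $\big\Vert \< A \>^{-\mu} e^{itH}\chi(H) \< A \>^{-\mu}\big\Vert \lesssim \< t \>^{-\mu}$ by a standard duality/interpolation argument, and observe it suffices to treat integer $\mu$ and then interpolate, so that $\overline\mu$ is essentially $\mu + 2$. Second, establish the base case: the minimal velocity / minimal escape estimate coming directly from the Mourre inequality, namely that $\chi(H) F(A \le vt)\, e^{itH}\chi(H)$ is $O(\< t\>^{-N})$ for suitable $v>0$ and any $N$, with constants polynomial in the commutator norms — this is the only place where the sign of the commutator is used and it is the conceptual heart. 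Third, bootstrap: write $\< A \>^{-\mu} e^{itH}\chi(H)$ and insert a partition $F(A \le vt) + F(A \ge vt)$; on the region $A \gtrsim vt$ one gains a power of $t$ for free from the weight, and on the complementary region one uses the minimal velocity estimate, then commutes $\< A\>^{-\mu}$ through $e^{itH}$ picking up commutators and reducing to a lower weight, closing the induction. Finally, assemble the polynomial dependence by noting each of the finitely many commutation and functional-calculus manipulations (via, e.g., the Helffer–Sjöstrand formula for $\chi(H)$, whose almost-analytic extension is fixed once $\chi$ is fixed) introduces only polynomially many factors of $\Vert \ad^j_A H\Vert$.

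The main obstacle is the bookkeeping in the third step: one must verify that commuting the weights $\< A \>$ past $e^{itH}$ and past $\chi(H)$, and handling the functional calculus of $H$ in the presence of only $C^{\overline\mu}(A)$ regularity, never produces a constant that depends on $H$ or $A$ in any way other than through $\delta$, $I$, $\chi$ and the norms $\Vert \ad^j_A H\Vert$. In particular the Helffer–Sjöstrand representation of $\chi(H)$ must be differentiated in $A$ up to order $\overline\mu$, and each derivative $\ad^j_A (H-z)^{-1}$ must be expanded into a sum of products of resolvents and commutators $\ad^k_A H$ with $k \le j$; the resulting bounds carry negative powers of $|\im z|$ that are absorbed by the almost-analytic extension precisely because $\overline\mu > \mu + 1$. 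Once this uniform commutator expansion is in place, the rest is the classical Mourre-theoretic propagation argument, and collecting the powers of the commutator norms gives the advertised polynomial $P_{\mu,\chi}$.
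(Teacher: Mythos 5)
Your proposal takes the same route as the paper, which states Theorem \ref{a16} with only a citation to Hunziker, Sigal and Soffer \cite{HuSiSo99_01} and the remark that the uniform version is obtained by ``following the different constants in that paper'' with $\delta$ and $I$ fixed. Your sketch of the minimal escape velocity estimate driven by the Mourre inequality, the bootstrap over powers of $\langle A\rangle$, and the Helffer--Sj\"ostrand bookkeeping of $\ad^j_A(H-z)^{-1}$ is precisely what that constant-tracking amounts to, so the approach matches.
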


\Subsection{The Mourre estimate} \label{SME}

The goal of this section is to obtain some estimates at low frequency that will be used in the next section to prove Theorems \ref{th1} and \ref{th4} thanks to Theorem \ref{a16}. Note that Vasy and Wunsch \cite{VaWu09_01} have shown Mourre estimates for $P$ and $\sqrt{P}$ at low frequency for scattering manifolds. For that, they have used some Hardy type estimates proved by pseudodifferential calculus.

Let $\psi_{\lambda} \in C^{\infty} ( \R ; \R )$ be a family of functions fulfilling the following conditions. We suppose that, for all $j \in \N$, we have uniformly in $\lambda \geq 1$
\begin{equation*}
\big\vert \partial_{x}^{j} \psi_{\lambda} (x) \big\vert \lesssim 1,
\end{equation*}
and that there exist an open interval $I \Subset ] 0, + \infty [$ and constants $\delta , B , C >0$ such that
\begin{equation*}
\forall x \geq B \qquad \psi_{\lambda} (x) = C ,
\end{equation*}
and
\begin{equation*}
\forall x \in I \qquad \psi_{\lambda}^{\prime} (x) \geq \delta ,
\end{equation*}
for all $\lambda \geq 1$. The aim of this section is to establish a Mourre estimate for the operators $\psi_{\lambda} (\lambda P)$ which holds uniformly in $\lambda \gg 1$. We will assume the hypotheses of Theorem \ref{th1}. Let
\begin{equation*}
A= \frac{1}{2} ( x D + D x ) , \quad D ( A ) = \big\{ u \in L^{2} ( \R^{d} ) ; \ A u \in L^{2} ( \R^{d} ) \big\} ,
\end{equation*}
be the generator of dilations.

\begin{proposition}\sl  \label{PM1}
$i)$ For all $j \in \N$ and $\varepsilon > 0$, we have $\psi_{\lambda} (\lambda P) \in C^{j} (A)$ and
\begin{equation}
\big\Vert \ad^j_{A} \psi_{\lambda} (\lambda P) \big\Vert \lesssim
\left\{\begin{aligned}
&1 && \text{if } d \geq 3,   \\
&\lambda^{\varepsilon} && \text{if } d=2 .
\end{aligned} \right. 
\end{equation}

$ii)$ For $\lambda$ large enough, we have the following Mourre estimate
\begin{equation}  \label{M7}
\one_{I} ( \lambda P) i \big[ \psi_{\lambda}(\lambda P), A \big] \one_{I} ( \lambda P) \geq \delta ( \inf I ) \one_{I} ( \lambda P ) .
\end{equation}

$iii)$ For all $\varphi \in C^{\infty}_{0}( \R )$ and $\mu , \varepsilon > 0$, we have
\begin{equation}
\big\Vert \<A\>^{\mu} \varphi (\lambda P) \<x\>^{-\mu} \big\Vert \lesssim \lambda^{- \frac{1}{2} \min ( \mu , d/2 ) + \varepsilon} .
\end{equation}
\end{proposition}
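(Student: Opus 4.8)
The plan is to reduce the three assertions to the Helffer--Sj\"ostrand functional calculus, the low frequency resolvent estimates of Section~\ref{a1}, and Hardy type inequalities. Two observations organize everything. First, $( z - \lambda P )^{-1} = \lambda^{-1} ( \lambda^{-1} z - P )^{-1}$, so the regime $\lambda \to + \infty$ is the regime where the spectral parameter tends to $0$; since $\psi_{\lambda}$ is eventually constant, only $z$ in a fixed bounded set enter the formulas, and all the bounds we need follow from \emph{uniform-in-$z$} weighted resolvent estimates for $P$ as $z \to 0$, which is the content of Section~\ref{a1}. Second, $\ad_A^{k} ( - \Delta )$ is a constant times $- \Delta$, while each $\ad_A$ applied to $P + \Delta$ amounts to letting the dilation field $x \cdot \nabla$ act on the coefficients of $P$, and by \eqref{a4} this preserves the $\< x \>^{- \rho}$ decay; hence
\[
\ad_A^{k} P = c_{k} P + R_{k} ,
\]
with $c_{k}$ constant and $R_{k}$ a second order operator in divergence form whose coefficients are $O ( \< x \>^{- \rho} )$ (the lower order ones decaying faster). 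In particular $i [ P , A ] = 2 P + R_{1}$.

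\emph{Part iii).} By interpolation it is enough to treat $\mu = N \in \N$; since $\< A \>^{N} \varphi ( \lambda P ) \< x \>^{- N}$ and its adjoint have the same norm, it suffices to bound $\< x \>^{- N} \varphi ( \lambda P ) \< A \>^{2 N} \varphi ( \lambda P ) \< x \>^{- N}$, and $\< A \>^{2 N}$ is a polynomial in $A$, hence a sum of monomials $x^{\beta} D^{\gamma}$ with $| \beta | = | \gamma | \le N$ (up to terms of lower homogeneity, treated the same way). For such a monomial, $D^{\gamma}$ acting on the low frequency cutoff $\varphi ( \lambda P )$ gains $\lambda^{- | \gamma | / 2}$ (using \eqref{a3} and $P \lesssim \lambda^{-1}$ on the range of $\varphi ( \lambda P )$), while the powers of $x$, once commuted through $\varphi ( \lambda P )$, are absorbed by $\< x \>^{- N}$ by the Hardy and Hardy--Rellich inequalities $\| \< x \>^{- m} u \| \lesssim \| u \|_{\dot{H}^{m}}$ ($m < d / 2$) together with $\| u \|_{\dot{H}^{1}}^{2} \approx \< P u , u \>$. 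Altogether this yields $\lambda^{- \frac{1}{2} \min ( N , d / 2 )}$, the extra $\lambda^{\varepsilon}$ being the price of the endpoint $m = d / 2$ — equivalently, of the logarithmic divergence of the zero energy resolvent when $d = 2$. The refined weighted resolvent bounds used here are precisely those of Section~\ref{a1}.

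\emph{Parts i) and ii).} That $P$, and hence $\psi_{\lambda} ( \lambda P )$, is of class $C^{j} ( A )$ is the routine check that the iterated commutators $\ad_A^{k} P$ are well defined and $P$-bounded. By Helffer--Sj\"ostrand, $\ad_A^{j} \psi_{\lambda} ( \lambda P )$ is a finite sum of terms
\[
\frac{1}{\pi} \int \overline{\partial} \widetilde{\psi_{\lambda}} ( z ) \, ( z - \lambda P )^{-1} \ad_A^{k_{1}} ( \lambda P ) \, ( z - \lambda P )^{-1} \cdots \ad_A^{k_{m}} ( \lambda P ) \, ( z - \lambda P )^{-1} \, d L ( z ) , \qquad k_{1} + \cdots + k_{m} = j ,
\]
$\widetilde{\psi_{\lambda}}$ being an almost analytic extension of $\psi_{\lambda}$. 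Substituting $\ad_A^{k} ( \lambda P ) = c_{k} \lambda P + \lambda R_{k}$: the contributions built only from the $c_{k} \lambda P$ pieces collapse, after integrating by parts in $z$, to operators $( \lambda P )^{\ell} \psi_{\lambda}^{( \ell )} ( \lambda P )$ (up to constants), which are uniformly bounded because $x^{\ell} \psi_{\lambda}^{( \ell )} ( x )$ is bounded (it is supported where $x \le B$ and $| \psi_{\lambda}^{( \ell )} | \lesssim 1$); the contributions with at least one $\lambda R_{k}$ are controlled by Part iii) and Section~\ref{a1}, the factor $\lambda$ being compensated by the two low frequency derivatives carried by $R_{k}$. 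This costs a positive power of $\lambda$ only at the endpoint of Part iii) in dimension $d = 2$, whence the dichotomy. For the Mourre estimate, the case $j = 1$ reads $i [ \psi_{\lambda} ( \lambda P ) , A ] = 2 \lambda P \, \psi_{\lambda}' ( \lambda P ) + E_{\lambda}$, where $E_{\lambda}$ collects the $\lambda R_{1}$ term and the remainder with two resolvents and $\ad_A^{2} ( \lambda P )$. On the range of $\one_{I} ( \lambda P )$ one has $\lambda P \ge \inf I$ and $\psi_{\lambda}' ( \lambda P ) \ge \delta$, so the main term is $\ge 2 \delta ( \inf I ) \one_{I} ( \lambda P ) \ge \delta ( \inf I ) \one_{I} ( \lambda P )$; and $\| \one_{I} ( \lambda P ) E_{\lambda} \one_{I} ( \lambda P ) \| \to 0$ as $\lambda \to + \infty$ — writing $R_{1}$ in divergence form with $\< x \>^{- \rho}$ coefficients, splitting the weight as $\< x \>^{- \rho / 2} \cdot \< x \>^{- \rho / 2}$ and using Part iii) (with $\mu = \rho / 2$) on each half gives $O ( \lambda^{- c} )$ with $c > 0$ depending on $\min ( \rho , d )$ — so $E_{\lambda}$ is absorbed for large $\lambda$, yielding \eqref{M7}.

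\emph{Main difficulty.} The crux is the uniform-in-$\lambda$ (equivalently uniform-in-$z$, $z \to 0$) control of the error terms: these a priori carry a factor $\lambda$ from $\lambda P$, and the whole point is that it is defeated by the spatial spreading of low energy states, quantified by the Hardy type inequalities and the resolvent estimates of Section~\ref{a1}. Keeping track of the endpoint behaviour — genuinely borderline when $d = 2$ — is what produces the $\lambda^{\varepsilon}$ losses.
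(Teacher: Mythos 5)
Your overall approach --- Helffer--Sj\"ostrand calculus combined with the weighted low-frequency resolvent and Hardy estimates of Section~\ref{a1} --- is essentially the paper's. However, the reduction in Part iii) ``by interpolation it is enough to treat $\mu = N \in \N$'' is incorrect and constitutes a genuine gap. The target exponent $\frac{1}{2}\min(\mu, d/2)$ is a concave piecewise-linear function of $\mu$ with a kink at $\mu = d/2$, whereas complex interpolation between $\mu = [\mu]$ and $\mu = [\mu]+1$ only yields the \emph{affine} interpolation of the endpoint exponents; when $[\mu] < d/2 < [\mu]+1$ and $[\mu] < \mu < d/2$ this falls strictly short. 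Concretely, for $d = 3$ and $\mu = 1.4$: interpolating between $\lambda^{-1/2}$ at $\mu = 1$ and $\lambda^{-3/4}$ at $\mu = 2$ gives $\lambda^{-0.6}$, not the required $\lambda^{-0.7}$, and the discrepancy $\lambda^{0.1}$ is not absorbed by the $\lambda^{\varepsilon}$ loss. The paper avoids this by splitting $\<A\>^{\mu} \lesssim \<A\>^{\mu - [\mu]}A^{[\mu]} + \<A\>^{\mu - [\mu]}$, using interpolation only for the fractional exponent $\mu - [\mu] \in [0,1)$ on weights in $x$, and then feeding the resulting non-integer exponents directly into the Hardy-type estimates of Propositions~\ref{a12} and~\ref{a13}, which are stated for continuous $\beta, \gamma, \delta$; no interpolation across the kink at $d/2$ is ever required.

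A secondary issue in Parts i) and ii): you propose to control the error terms carrying a factor $\lambda R_{k}$ ``by Part iii)'', but Part iii) only bounds $\<A\>^{\mu}\varphi(\lambda P)\<x\>^{-\mu}$ and does not absorb the first-order factors $\lambda^{1/2}\widetilde{\partial}$ which $R_{k}$ carries, being in divergence form $\widetilde{\partial}^{*} r_{0} \widetilde{\partial} + \cdots$. What is actually needed --- and what the paper uses for the products $(\lambda P - z)^{-1}\lambda R(\lambda P - z)^{-1}\cdots$ --- are the weighted resolvent estimates of Proposition~\ref{a12} together with Remark~\ref{a54} (for the $|\im z|^{-C}$ dependence) and Lemma~\ref{reshalf}, which bound $\<x\>^{-\rho/2}\widetilde{\partial}(\lambda P - z)^{-1}$ and $\<x\>^{-\rho/2}\widetilde{\partial}(\lambda P - z)^{-1}\widetilde{\partial}^{*}\<x\>^{-\rho/2}$ uniformly as $\lambda \to \infty$. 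The correct logical structure is: Section~\ref{a1} $\Rightarrow$ the resolvent-product bounds \eqref{R_1}--\eqref{R_2} $\Rightarrow$ Parts i) and ii); Part iii) is a separate, parallel consequence of Section~\ref{a1}, not an ingredient in i)--ii).
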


The rest of this section is devoted to the proof of the previous proposition. We start by proving the commutator estimate.

\begin{lemma}\sl \label{lemma1}
For all $j \in \N$ and $\varepsilon > 0$, we have $\psi_{\lambda} (\lambda P) \in C^{j} (A)$ and
\begin{equation*}
\big\Vert \ad^j_{A} \psi_{\lambda} (\lambda P) \big\Vert \lesssim
\left\{\begin{aligned}
&1 && \text{if } d \geq 3,   \\
&\lambda^{\varepsilon} && \text{if } d=2.
\end{aligned} \right. 
\end{equation*}
\end{lemma}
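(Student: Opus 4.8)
The plan is to reduce the bound on $\ad^j_A \psi_\lambda(\lambda P)$ to symbolic functional calculus estimates via the Helffer--Sj\"ostrand formula, and then to control the resulting resolvent commutators using the low frequency resolvent estimates of Section \ref{a1} together with Hardy-type inequalities. First I would fix an almost analytic extension $\tilde\psi_\lambda$ of $\psi_\lambda$; since all derivatives of $\psi_\lambda$ are bounded uniformly in $\lambda \geq 1$ and $\psi_\lambda$ is constant for $x \geq B$, the extension can be chosen with $\vert \bar\partial \tilde\psi_\lambda(z) \vert \lesssim \vert \im z \vert^N$ on any strip, uniformly in $\lambda$, and supported in a fixed neighborhood of $\R$. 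Then
\begin{equation*}
\psi_\lambda(\lambda P) = \frac{1}{\pi} \int_\C \bar\partial \tilde\psi_\lambda(z) \, (\lambda P - z)^{-1} \, L(dz),
\end{equation*}
and one computes $\ad^j_A \psi_\lambda(\lambda P)$ by differentiating under the integral sign, using $\ad_A (\lambda P - z)^{-1} = -(\lambda P - z)^{-1} [\lambda P, A] (\lambda P - z)^{-1} = -\lambda (\lambda P - z)^{-1} [P,A] (\lambda P - z)^{-1}$ and iterating. Each iterated commutator produces a sum of terms that are products of resolvents $(\lambda P - z)^{-1}$ interlaced with the operators $\ad^k_A P$ for $1 \leq k \leq j$.

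The key structural point is that $P$ is a second-order differential operator in divergence form, so $[P, A]$, and more generally each $\ad^k_A P$, is again a second-order differential operator with coefficients of the same class as those of $P$ (this is where $A$ being the generator of dilations is used: conjugating a homogeneous-degree-$2$ operator essentially reproduces it up to lower order, and \eqref{a4} guarantees the long range symbol bounds survive). Hence each $\ad^k_A P$ is relatively bounded with respect to $P$, and $\ad^k_A P = P^{1/2} B_k P^{1/2} + (\text{lower order})$ with suitable bounded weighted pieces; the upshot is that a factor $\ad^k_A P$ sandwiched between two resolvents can be estimated, after inserting weights $\<x\>^{\pm s}$, by $\lambda^{-1} \vert \im z \vert^{-1}$ times spectrally localized resolvent bounds. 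To make this quantitative I would rescale: write $\lambda P - z = \lambda(P - z/\lambda)$ and use that, for $z$ in the support of $\bar\partial\tilde\psi_\lambda$, the spectral parameter $z/\lambda$ lies in a small neighborhood of a fixed compact subinterval of $]0,+\infty[$ (after the change of variables the relevant energies are $\sim 1/\lambda \to 0$), so the low frequency resolvent estimates from Section \ref{a1} — namely weighted bounds on $\<x\>^{-s}(P-w)^{-1}\<x\>^{-s}$ and on derivatives in $w$ — apply with explicit powers of the small energy. Counting the powers of $\lambda$: there are $j$ commutators each contributing $\lambda$, against $j+1$ resolvents; combined with the $z$-integration against $\bar\partial\tilde\psi_\lambda$ (which absorbs the negative powers of $\im z$ at the cost of taking enough derivatives), the net power of $\lambda$ is non-positive for $d \geq 3$, giving the uniform bound, while in $d = 2$ the borderline behavior of the low-energy resolvent (a logarithmic/$\<x\>^\varepsilon$ loss) produces the $\lambda^\varepsilon$.

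The main obstacle I expect is precisely the dimension $d = 2$ endpoint and, relatedly, obtaining the weighted resolvent estimates with the correct homogeneity in the small energy: near zero frequency the free resolvent on $\R^d$ behaves like $\vert\xi\vert^{-2}$ which is not locally $L^1$ for $d = 2$, so the naive bounds diverge and one must use Hardy's inequality $\Vert \<x\>^{-1} u \Vert \lesssim \Vert \nabla u\Vert$ (valid for $d \geq 3$) or its two-dimensional substitute with an $\varepsilon$-loss to trade a negative power of $P$ for a weight $\<x\>^{-1}$. Concretely, whenever a $P^{-1/2}$ would otherwise appear from splitting $\ad^k_A P = P^{1/2}(\cdots)P^{1/2}$ against a resolvent that is only $O(1)$ rather than $O(\lambda)$ near the bottom of the spectrum, I would absorb it into a spatial weight via Hardy, and the failure of Hardy at $d=2$ is exactly what forces the $\lambda^\varepsilon$. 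So the proof splits into: (1) the Helffer--Sj\"ostrand expansion and bookkeeping of commutator terms; (2) identifying $\ad^k_A P$ as $P$-bounded divergence-form operators with long range coefficients; (3) a rescaling reducing everything to fixed-energy neighborhoods together with the Section \ref{a1} low-energy resolvent bounds; and (4) Hardy inequalities to handle the negative powers of $P$, with the $d=2$ case treated by the $\varepsilon$-loss variant. That $C^j(A)$ membership then follows from the same estimates applied with $j$ replaced by $j$ and the strong continuity of the relevant families.
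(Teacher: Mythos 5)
Your overall framework (Helffer--Sj\"ostrand formula, iterated commutator expansion, low-frequency resolvent estimates from Section~\ref{a1}, Hardy-type bounds to account for the $d=2$ loss) is the same as the paper's. However, there are two genuine gaps in the way you carry it out.

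First, your treatment of $\ad^k_A P$ as a single $P$-bounded operator ``with coefficients of the same class as those of $P$'' obscures the decomposition that the whole argument hinges on. The precise fact used in the paper is $i[P,A]=2P+R$ and $[R,A]=R$, where $R=\widetilde{\partial}^{*}r_{0}\widetilde{\partial}+\widetilde{\partial}^{*}r_{1}+r_{1}\widetilde{\partial}+r_{2}$ has coefficients with \emph{extra} decay $\CO(\<x\>^{-\rho-j-|\alpha|})$. The leading part $2P$ has coefficients that do \emph{not} decay, and no weighted resolvent bound applies to it; it is handled purely algebraically via the identity $\lambda P(\lambda P-z)^{-1}=1+z(\lambda P-z)^{-1}$, which eliminates all $\lambda P$ factors from the iterated commutator and leaves only products of the form $(\lambda P-z)^{-n_0}\lambda R(\lambda P-z)^{-n_1}\cdots\lambda R(\lambda P-z)^{-n_k}$. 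Only these residual $\lambda R$ blocks, factored as $(\sqrt{\lambda}\,r^{*})\CO(1)(\sqrt{\lambda}\,r)$ with $r$ one of $\<x\>^{-\rho/2}\widetilde{\partial}$ or $\<x\>^{-1-\rho/2}$, are fed into Propositions~\ref{a12}--\ref{a13}, Remark~\ref{a54} and Lemma~\ref{reshalf}. Your plan of ``inserting weights'' around a generic $\ad^k_A P=P^{1/2}B_kP^{1/2}+\cdots$ would run into the problem that $B_k$ contains the non-decaying leading part, for which no weight gain is available.

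Second, your $\lambda$-bookkeeping (``$j$ commutators each contributing $\lambda$, against $j+1$ resolvents, so the net power is non-positive for $d\geq 3$'') is not correct. The resolvents $(\lambda P-z)^{-1}$ contribute only $\CO(|\im z|^{-1})$, not $\CO(\lambda^{-1})$; the $\lambda$'s from the commutators are not balanced by resolvent factors but are absorbed by the identity $\lambda P(\lambda P-z)^{-1}=1+z(\lambda P-z)^{-1}$ (for the $P$-parts) and by the $\sqrt{\lambda}$-normalisation of $\widetilde{\partial}$ (for the $R$-parts). Making your counting precise, in fact, would be forced to re-derive exactly the reduction to $\lambda R$ blocks described above. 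A smaller technical point: since $\psi_\lambda$ tends to a nonzero constant at $+\infty$, one must first write $\psi_\lambda(\lambda P)=C+\chi_\lambda(\lambda P)$ with $\chi_\lambda\in C_0^\infty$ before using an almost analytic extension supported in a fixed compact; an extension of $\psi_\lambda$ itself ``supported in a fixed neighborhood of $\R$'' cannot have compact support.
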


\begin{proof}
It is well known that $P \in C^{1} (A)$. Moreover, using the pseudodifferential calculus, we obtain $\psi_{\lambda} (\lambda P) \in C^{j} (A)$ for all $j \in \N$. We now estimate the multi-commutators. In the following, a term $r_{j}$, $j\in \N$, will denote a smooth function such that
\begin{equation} \label{c17}
\forall \alpha \in \N^d \qquad \partial^{\alpha}_{x} r_{j} (x) = \CO \big( \< x \>^{-\rho - j - \vert \alpha \vert} \big) .
\end{equation}
Also let $\widetilde{\partial}_j=\partial_j b$ and $R$ be a term of the form
\begin{equation*}
R = \widetilde{\partial}^{*} r_{0} \widetilde{\partial} + \widetilde{\partial}^{*} r_{1} + r_{1} \widetilde{\partial}+r_2 ,
\end{equation*}
where, to clarify the statement, we have not written the sums over the indexes. Then, a direct calculation (see also \cite[(3.22) and page 42]{BoHa10_01}) gives
\begin{equation} \label{c7}
i [P , A ] = ( 2 P + R), \qquad [R,A] = R .
\end{equation}

Since $\psi_{\lambda} ( x ) = C$ for $x \geq B$, there exists $\chi_{\lambda} \in C^{\infty}_{0} ( [- 2 B , 2 B ] ; \R )$, uniformly bounded in $\lambda$ with all its derivatives, such that
\begin{equation*}
\psi_{\lambda} ( x ) = C + \chi_{\lambda} ( x ) ,
\end{equation*}
for $x \in [ 0 , + \infty [$. In particular, $\psi_{\lambda} ( \lambda P ) = C + \chi_{\lambda} ( \lambda P )$ because $\lambda P \geq 0$. Let $\widetilde{\chi}_{\lambda} \in C^{\infty}_{0} ( \C )$ be an almost analytic extension of $\chi_{\lambda}$ supported in a fixed compact of $\C$ with
\begin{equation*}
\forall n \in \N \qquad \big\vert \overline{\partial} \widetilde{\chi}_{\lambda} (z) \big\vert \lesssim \vert \im z \vert^{n} ,
\end{equation*}
uniformly in $\lambda \geq 1$. Therefore, using \eqref{c7} and the Helffer--Sj\"ostrand formula, $\ad_{A}^{j} \psi_{\lambda} (\lambda P)$ is a finite sum of terms of the form
\begin{equation*}
\int \overline{\partial} \widetilde{\chi}_{\lambda} (z) (\lambda P-z)^{-1} \prod_{1}^{k} \Big( \lambda (P+R) (\lambda P-z)^{-1} \Big) L (d z) ,
\end{equation*}
for some $k \leq j$. Since $\lambda P ( \lambda P -z)^{-1} = 1 + z ( \lambda P -z)^{-1}$, $\ad_{A}^{j} \psi_{\lambda} (\lambda P)$ can be written as a finite sum of terms of the form
\begin{equation} \label{a22}
\int z^{\ell} \overline{\partial} \widetilde{\chi}_{\lambda} (z) (\lambda P-z)^{- n_{0}} \lambda R (\lambda P-z)^{- n_{1}} \cdots \lambda R (\lambda P-z)^{- n_{k}} L (d z) ,
\end{equation}
with $k , \ell \leq j$ and $n_{\bullet} \in \N \setminus \{ 0 \}$.

Using Proposition \ref{a12} and Remark \ref{a54}, we see that we have for some $C>0$ 
\begin{equation} \label{R_1}
\big\Vert r ( \lambda P-z)^{-1} \big\Vert + \big\Vert ( \lambda P-z)^{-1} r^{*} \big\Vert \lesssim \frac{1}{\sqrt{\lambda} \vert \im z \vert^C}
\left\{ \begin{aligned}
&\lambda^{-\varepsilon} && d\geq 3,  \\
&\lambda^{\varepsilon} && d=2 ,
\end{aligned} \right.
\end{equation}
for all $\varepsilon>0$ small enough. Here $r$ (resp. $r^{*}$) is one of the operators $\< x \>^{- \rho /2} \widetilde{\partial}$ or $\< x \>^{-1-\rho/2}$ (resp. $\widetilde{\partial}^* \< x \>^{- \rho /2}$ or $\< x \>^{- 1 - \rho /2}$). In the same manner we find, using also Lemma \ref{reshalf},
\begin{equation} \label{R_2} 
\big\Vert r (\lambda P-z)^{-1} r^{*} \big\Vert \lesssim \frac{\lambda^{\varepsilon}}{\lambda \vert \im z\vert^C}
\end{equation}
Putting together \eqref{R_1}, \eqref{R_2} and $R = r^{*} \CO (1) r$, we get 
\begin{equation*}
\big\Vert (\lambda P-z)^{- \alpha_{0}} \lambda R (\lambda P-z)^{- \alpha_{1}} \cdots \lambda R (\lambda P-z)^{- \alpha_{k}} \big\Vert\lesssim \frac{1}{\vert \im z \vert^C}
\left\{ \begin{aligned}
&\lambda^{-\varepsilon} && d\geq 3,  \\
&\lambda^{\varepsilon} && d=2 .
\end{aligned} \right.
\end{equation*}
Combining with \eqref{a22}, this finishes the proof of the lemma.
\end{proof}

We now can prove the Mourre estimate
\begin{lemma}\sl
For $\lambda$ large enough, we have the following Mourre estimate
\begin{equation*}
\one_{I} ( \lambda P) i \big[ \psi_{\lambda}(\lambda P), A \big] \one_{I} ( \lambda P) \geq \delta ( \inf I ) \one_{I} ( \lambda P ) .
\end{equation*}
\end{lemma}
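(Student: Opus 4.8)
The plan is to establish the Mourre estimate for $\psi_\lambda(\lambda P)$ by transferring the known Mourre estimate for $\lambda P$ itself via a functional-calculus argument. First I would recall or prove that $A$ is conjugate to $P$ in the strong Mourre sense near positive energies; in fact, from the identity $i[P,A] = 2P + R$ in \eqref{c7}, localizing in energy to $I/\lambda$ and using that $\lambda R$ is relatively small compared to $\lambda P$ on that spectral window (this is exactly the content of the resolvent bounds \eqref{R_1}--\eqref{R_2} used in Lemma \ref{lemma1}, which show $\lambda R$ is lower order as $\lambda \to \infty$), one gets
\begin{equation*}
\one_I(\lambda P)\, i[\lambda P, A]\, \one_I(\lambda P) \geq (2\inf I - o(1))\, \one_I(\lambda P),
\end{equation*}
uniformly for $\lambda$ large. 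Actually, for the present lemma it is cleaner to observe that $i[\lambda P,A] = \lambda(2P+R) = 2\lambda P + \lambda R$, so on the range of $\one_I(\lambda P)$ the leading term $2\lambda P$ is bounded below by $2\inf I$ and the error $\lambda R$ is controlled by the same estimates as before.

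Next I would write $\psi_\lambda = C + \chi_\lambda$ with $\chi_\lambda \in C_0^\infty([-2B,2B])$ as in Lemma \ref{lemma1}, so that $i[\psi_\lambda(\lambda P),A] = i[\chi_\lambda(\lambda P),A]$. The key computation is a "chain rule" for the commutator: using the Helffer--Sj\"ostrand formula together with \eqref{c7},
\begin{equation*}
i[\chi_\lambda(\lambda P), A] = \frac{1}{\pi}\int \overline{\partial}\widetilde{\chi}_\lambda(z)\, (\lambda P - z)^{-1}\, i[\lambda P, A]\, (\lambda P - z)^{-1}\, L(dz).
\end{equation*}
Morally this says $i[\chi_\lambda(\lambda P),A] \approx \chi_\lambda'(\lambda P)\cdot i[\lambda P,A]$, up to a remainder built from double commutators $[[\lambda P,A],\lambda P]$-type terms which, by the resolvent estimates of Lemma \ref{lemma1}, carry an extra smallness factor as $\lambda\to\infty$. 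Then I would multiply on both sides by $\one_I(\lambda P)$. On the support of $\chi_\lambda'$ restricted to $I$ we have $\chi_\lambda' = \psi_\lambda' \geq \delta$ by hypothesis, and combining with the lower bound $i[\lambda P,A]\one_I(\lambda P) \geq (2\inf I - o(1))\one_I(\lambda P)$ — or more simply using $2\lambda P \geq 2\inf I$ on that range, dropping the positive main part and keeping just the error — one arrives at
\begin{equation*}
\one_I(\lambda P)\, i[\psi_\lambda(\lambda P), A]\, \one_I(\lambda P) \geq \big(\delta\inf I - o(1)\big)\,\one_I(\lambda P),
\end{equation*}
which for $\lambda$ large enough gives the claimed estimate with constant $\delta(\inf I)$ (after a harmless shrinking of $\delta$, or by being slightly more careful and noting the main positive term only helps).

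The main obstacle is controlling the remainder terms in the Helffer--Sj\"ostrand expansion of $i[\chi_\lambda(\lambda P),A]$ uniformly in $\lambda$: the naive expansion produces factors of $\lambda R$ sandwiched between resolvents $(\lambda P-z)^{-1}$, and one must invoke precisely the bounds \eqref{R_1}--\eqref{R_2} (coming from the low-frequency resolvent estimates of Section \ref{a1} and Lemma \ref{reshalf}) to see that these remainders are $o(1)$ as $\lambda\to\infty$ rather than $O(1)$; in dimension $d=2$ there is the extra subtlety that one only gets $\lambda^\varepsilon$ rather than a genuine decay, so the remainder terms must be arranged to carry a net negative power of $\lambda$ even after absorbing the $\lambda^\varepsilon$ loss — this is possible because each double commutator genuinely produces a gain of $\lambda^{-1}$ from an extra resolvent $r(\lambda P-z)^{-1}r^*$ factor, beating the $\lambda^\varepsilon$ loss. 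Once that bookkeeping is done the proof is essentially the standard one showing that the Mourre estimate is stable under functions whose derivative is bounded below.
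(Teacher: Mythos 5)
Your approach is essentially the paper's: write $\psi_\lambda = C + \chi_\lambda$, apply Helffer--Sj\"ostrand, and split $i[\lambda P, A]$ into a positive main piece plus a lower-order remainder. However, two details in your account are imprecise and, if you tried to make them rigorous, would create work that the paper's version avoids. First, the paper does not invoke a "chain rule up to double commutators." Since $2\lambda P$ commutes exactly with the resolvents $(\lambda P - z)^{-1}$, the contribution of the $2\lambda P$ part of $i[\lambda P, A]$ to the Helffer--Sj\"ostrand integral evaluates \emph{exactly} to $2\psi_\lambda'(\lambda P)\lambda P$ with no remainder whatsoever; only the sandwiched $\lambda\widehat{R}$ piece needs to be estimated. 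Your framing would instead produce extra error terms of the schematic form $(\lambda P - z)^{-1}[\lambda P, i[\lambda P,A]](\lambda P - z)^{-2}$, which you would then also have to bound; that is avoidable. Second, your worry about $d = 2$ is misplaced for this lemma. The relevant error term in the paper's proof is the sandwiched $\lambda\widehat{R}$ where $\widehat{R} = \widetilde{\partial}^* r_0 \widetilde{\partial} + \widetilde{\partial}^* r_1 + r_1\widetilde{\partial}$ carries no $r_2$ piece, and Proposition \ref{a12} gives a genuine $\lambda^{-\varepsilon}$ decay for $\Vert(\lambda P - z)^{-1}\lambda\widehat{R}(\lambda P - z)^{-1}\Vert$ in every dimension $d \geq 2$. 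The $\lambda^{\varepsilon}$ loss in $d = 2$ occurs only in Lemma \ref{lemma1} for the norms $\Vert\ad_A^j\psi_\lambda(\lambda P)\Vert$ (where the $r_2$ piece of $R$ enters), not here. With these two corrections, your proof becomes the paper's: the main term gives $\geq 2\delta(\inf I)\one_I(\lambda P)$, the error is $O(\lambda^{-\varepsilon})$, and taking $\lambda$ large yields the lemma.
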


\begin{proof}
We take the same notations as in the proof of Lemma \ref{lemma1}. Then, we have
\begin{align*}
i \big[ \psi_{\lambda} (\lambda P) , A \big] &= - \frac{1}{\pi} \int_{\C} \overline{\partial} \widetilde{\chi}_{\lambda} (z) ( \lambda P -z)^{-1} i[ \lambda P , A] ( \lambda P - z)^{-1} L ( d z)    \\
&= - \frac{1}{\pi} \int_{\C} \overline{\partial} \widetilde{\chi}_{\lambda} (z) ( \lambda P -z)^{-1} \lambda ( 2 P + \widehat{R}) ( \lambda P -z)^{-1} L ( d z) ,
\end{align*}
with, see \cite[(3.22)]{BoHa10_01},
\begin{equation*}
\widehat{R} = \widetilde{\partial}^{*} r_{0} \widetilde{\partial} + \widetilde{\partial}^{*} r_{1} + r_{1} \widetilde{\partial} .
\end{equation*}
Therefore, we obtain
\begin{align*}
\one_{I} ( \lambda P) i \big[ \psi_{\lambda} (\lambda P), A \big] \one_{I} ( \lambda P) &= 2 \one_{I} ( \lambda P) \psi^{\prime}_{\lambda} ( \lambda P) \lambda P \one_{I} ( \lambda P) + \one_{I} ( \lambda P) \widetilde{R} \one_{I} (\lambda P) \\
&\geq 2 \delta ( \inf I ) \one_{I} ( \lambda P) + \one_{I} ( \lambda P) \widetilde{R} \one_{I} (\lambda P) ,
\end{align*}
with
\begin{equation*}
\widetilde{R} = - \frac{1}{\pi} \int_{\C} \overline{\partial} \widetilde{\chi}_{\lambda} (z) ( \lambda P -z)^{-1} \lambda \widehat{R} ( \lambda P -z)^{-1} L (d z) .
\end{equation*}
By Proposition \ref{a12}, we have for some $\varepsilon , C >0$
\begin{equation*}
\big\Vert (\lambda P-z)^{-1} \lambda \widehat{R} ( \lambda P - z)^{-1} \big\Vert \lesssim \frac{\lambda^{- \varepsilon}}{\vert \im z \vert^C} .
\end{equation*}
Then $\Vert \widetilde{R} \Vert \lesssim \lambda^{- \varepsilon}$ and we get the Mourre estimate if $\lambda$ is sufficiently large.
\end{proof}

\begin{lemma}\sl
For all $\varphi \in C^{\infty}_{0}( \R )$ and $\mu , \varepsilon > 0$, we have
\begin{equation}
\big\Vert \<A\>^{\mu} \varphi (\lambda P) \<x\>^{-\mu} \big\Vert \lesssim \lambda^{- \frac{1}{2} \min ( \mu , d/2 ) + \varepsilon} .
\end{equation}
\end{lemma}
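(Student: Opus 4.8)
The plan is to estimate $\Vert \<A\>^{\mu} \varphi(\lambda P) \<x\>^{-\mu} \Vert$ by first reducing to the case where $\mu$ is a non-negative integer (by complex interpolation between $\mu$ and $\mu+1$, or simply by taking $\mu$ integer and then interpolating), and then expanding $\<A\>^{\mu}$ using powers of $A = \frac12(xD+Dx)$. The key point is that each factor of $A$ hitting $\varphi(\lambda P)$ produces, via the commutator structure $i[P,A] = 2P + R$ and $[R,A]=R$ from \eqref{c7}, a sum of terms in which $\varphi(\lambda P)$ (or a derivative $\varphi^{(k)}$, or $\chi_\lambda(\lambda P)$-type functions with the same scaling properties) is flanked by factors of $x$, by operators $\lambda R$, and by resolvents $(\lambda P - z)^{-1}$ coming from the Helffer--Sjöstrand representation, exactly as in the proof of Lemma \ref{lemma1}. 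So the whole operator is reduced to a finite sum of terms of the schematic form
\begin{equation*}
x^{a} \, \varphi^{(\ast)}(\lambda P) \, (\text{product of } \lambda R \text{ and resolvents}) \, \<x\>^{-\mu},
\end{equation*}
with $a \leq \mu$; here I would keep track of how many $x$'s can land before the first resolvent.

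Second, I would feed these terms into the low-frequency resolvent estimates. The point is that the weights $\<x\>^{-\rho/2}\widetilde\partial$ and $\<x\>^{-1-\rho/2}$ appearing in $R = r^{\ast}\CO(1)r$ are precisely the operators controlled by \eqref{R_1} and \eqref{R_2}, each resolvent-sandwiched $\lambda R$ contributing a gain $\lambda^{-1+\varepsilon}$ (up to $\vert\im z\vert^{-C}$), which after the $L(dz)$ integration against $\overline\partial\widetilde\chi_\lambda$ is harmless. The genuinely new ingredient compared to Lemma \ref{lemma1} is the pair of unweighted $x$-factors sitting at the outer ends: on the right I must absorb $\<x\>^{-\mu}$ against a resolvent, and on the left I must extract up to $\mu$ powers of $x$. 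I would use the Hardy-type / weighted resolvent bounds of Section \ref{a1} (Proposition \ref{a12}, Remark \ref{a54}, Lemma \ref{reshalf}) in the form $\Vert \<x\>^{-s} (\lambda P - z)^{-1}\Vert \lesssim \lambda^{-\min(s,?)/?}\vert\im z\vert^{-C}$ — more precisely the estimates that say applying a weight $\<x\>^{-\sigma}$ on one side of the resolvent at energy $\sim 1/\lambda$ costs $\lambda^{-\frac12\min(\sigma,d/2)+\varepsilon}$, reflecting that the free resolvent kernel $\<x\>^{2-d}$-type behaviour at low energy gives square-integrability of $\<x\>^{-\sigma}$ against the resolvent only up to the threshold $\sigma = d/2$ in the sense of Hilbert--Schmidt / Schur bounds. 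Combining the left weight $x^{\mu}$ (cost $\lambda^{+?}$ — actually this is where the scaling $x \mapsto \lambda^{1/2}x$ enters) with the right weight $\<x\>^{-\mu}$ and the intervening gains from the $\lambda R$ factors should yield exactly the exponent $-\frac12\min(\mu,d/2)+\varepsilon$.

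A cleaner way to organize the second step, which I would actually prefer, is to exploit the dilation scaling directly. Conjugating by the unitary $U_\lambda f(x) = \lambda^{d/4} f(\lambda^{1/2}x)$ turns $\lambda P$ into $\widetilde P_\lambda = -b_\lambda \dive(G_\lambda \nabla b_\lambda)$ with $G_\lambda(x) = G(\lambda^{1/2}x)$, $b_\lambda(x)=b(\lambda^{1/2}x)$, which converges to the flat Laplacian, and turns $A$ into $A$ again (dilations commute with the scaling up to the obvious) while $\<x\>^{-\mu}$ becomes $\<\lambda^{-1/2}x\>^{-\mu} \sim \lambda^{\mu/2}\<x\>^{-\mu}$ for $\vert x\vert \gtrsim \lambda^{1/2}$ — i.e. the weight is effectively flattened on the relevant region. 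Then $\Vert \<A\>^\mu \varphi(\lambda P)\<x\>^{-\mu}\Vert = \Vert \<A\>^\mu \varphi(\widetilde P_\lambda) U_\lambda \<x\>^{-\mu}\Vert$, and $U_\lambda\<x\>^{-\mu}U_\lambda^{-1} = \<\lambda^{-1/2}x\>^{-\mu}$. Writing $\<\lambda^{-1/2}x\>^{-\mu} = \lambda^{\mu/2}\<x\>^{-\mu}(\lambda^{-1}+\lambda^{-1}\<x\>^2)^{\mu/2}\<x\>^{-\mu}\cdots$ — more simply, $\<\lambda^{-1/2}x\>^{-\mu} \le \min(1,\lambda^{\mu/2}\<x\>^{-\mu})$ — one bounds the norm by $\lambda^{\mu/2}\Vert\<A\>^\mu\varphi(\widetilde P_\lambda)\<x\>^{-\mu}\Vert$ when $\mu \le d/2$, and the $\widetilde P_\lambda$-estimate is now uniform in $\lambda$ because $\widetilde P_\lambda$ is a uniform family of long-range perturbations; but that gives $\lambda^{\mu/2}$, the wrong sign. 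So the real mechanism must be the opposite: $\<x\>^{-\mu}$ on the original scale, after conjugation, is a weight that decays fast relative to the (now flat) operator, and the low-energy smallness of $\varphi(\widetilde P_\lambda)$ localized away from the origin provides the decay. I expect the honest route is the first one: direct expansion plus the weighted resolvent bounds from Section \ref{a1}, with the threshold $d/2$ emerging from the $L^2$-boundedness limit of $\<x\>^{-\mu}(\text{low-energy resolvent})$.

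The main obstacle, as the above vacillation shows, is getting the exponent sharp — in particular the appearance of $\min(\mu,d/2)$ rather than $\mu/2$ outright. This requires the precise low-frequency resolvent estimates: one needs that $\<x\>^{-s}(\lambda P)^{-\text{(power)}}$ gains a full $\lambda^{-\frac12\min(s,d/2)}$ (up to $\lambda^\varepsilon$), which is exactly the content of the generalized \cite[Appendix B]{BoHa10_01} estimates collected in Section \ref{a1}, and that one can interpolate/iterate these without losing the min. The combinatorics of distributing the $\mu$ powers of $A$ — deciding how many act to the left as bare $x$'s versus how many are absorbed into $\lambda R$ commutators — and then bookkeeping the resulting powers of $\lambda$ so that the total is never worse than $\lambda^{-\frac12\min(\mu,d/2)+\varepsilon}$, is the part that needs care but should be routine once the resolvent inputs are in hand.
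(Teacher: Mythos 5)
Your first route is the right one and is, in outline, what the paper does: expand the powers of $A$ against $\varphi(\lambda P)$ using the Helffer--Sj\"ostrand representation and the commutator algebra $i[P,A]=2P+R$, $[R,A]=R$, then feed the resulting weighted resolvent products into the Hardy-type bounds of Section~\ref{a1}. But what you describe as ``routine bookkeeping'' is in fact the entire content of the proof, and your sketch leaves it unexecuted. Two concrete problems.

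First, the reduction to integer $\mu$ by interpolating between $\mu$ and $\mu+1$ does not give the stated exponent. The target exponent $-\tfrac12\min(\mu,d/2)$ is a concave piecewise-linear function of $\mu$, so the chord between two consecutive integers straddling $d/2$ lies strictly below the graph, and interpolation then yields a strictly larger (hence worse) exponent near $\mu=d/2$; the $\varepsilon$ in the statement is arbitrary and cannot absorb this order-one loss. This is why the paper splits $\<A\>^{\mu}$ into $\<A\>^{\mu-[\mu]}A^{[\mu]}$ plus $\<A\>^{\mu-[\mu]}$ and treats the fractional power $\mu-[\mu]\in[0,1)$ by interpolating the \emph{resolvent} estimates (as in \eqref{a35}--\eqref{a37}), not the final statement.

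Second, your schematic normal form ``$x^a\,\varphi^{(\ast)}(\lambda P)\,(\text{product of }\lambda R\text{ and resolvents})\,\<x\>^{-\mu}$'' with bare powers of $x$ sitting to the left of $\varphi$ is not what the commutator expansion actually produces, and as written it cannot be estimated: $x^a$ with $a\le\mu$ is unbounded and has nothing to absorb it. The point of the paper's inductive identity \eqref{a25}/\eqref{a34} is precisely to arrange every $A$ (hence every factor of $x$) so that it is \emph{flanked by resolvents}, in the form $(\lambda P+1)^{-1}A(\lambda P+1)^{-1}$, so that $A=\CO(\<x\>)\widetilde\partial+\CO(1)$ can be paired with the weighted resolvent bounds of Proposition~\ref{a12}. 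It is then the case split $\mu\gtrless d/2$, via Proposition~\ref{a13} applied to $\<x\>^b\widehat\varphi(\lambda P)\<x\>^{-\mu}$ with the Hardy threshold $b<d/2$, that produces the $\min(\mu,d/2)$; nothing in your sketch explains where this cap comes from beyond the remark that it ``should emerge.'' Finally, your scaling alternative fails for exactly the reason you yourself note (it gives $\lambda^{+\mu/2}$), so it offers no fallback.
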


\begin{proof}
Here, we use the previous notations. We write
\begin{equation} \label{jb}
\big\Vert \<A\>^{\mu} \varphi (\lambda P) u \big\Vert \lesssim \big\Vert \< A \>^{\mu - [ \mu ]} A^{ [ \mu ]} \varphi (\lambda P) u \big\Vert + \big\Vert \< A \>^{\mu -[ \mu ]} \varphi (\lambda P) u \big\Vert .
\end{equation}

$\bullet$ We start by estimating the first term in \eqref{jb}. In the following, $\widehat{\varphi}$ will always be a function of the form $\widehat{\varphi} (x) = c(x+1)^{n} \varphi (x)$ with $c\in \C$ and $n \in \N$. The values of $c$ and $n$ can change from line to line. We first prove by induction over $[ \mu ] \in \N$ that, for all $\varphi \in C^{\infty}_{0} ( \R )$,
\begin{align}
A^{[\mu]} \varphi (\lambda P) = \sum_{J + K \leq [\mu]} \sum_{\fract{n_{\bullet} \geq 1}{\text{finite}}} (\lambda P+1)^{- n_{0}} \prod_{j=1}^{J} & \Big( \lambda (P+R) (\lambda P+1)^{- n_{j}} \Big)  \nonumber  \\
(\lambda & P + 1)^{-1} \prod_{k=1}^{K} \Big( A (\lambda P+1)^{-1} \Big) \widehat{\varphi} (\lambda P) . \label{a25}
\end{align}
For $[ \mu ] = 0$, we have
\begin{equation*}
\varphi (\lambda P) = (\lambda P+1)^{-2} (\lambda P+1)^{2} \varphi (\lambda P) = (\lambda P+1)^{-1} (\lambda P+1)^{-1} \widehat{\varphi} (\lambda P) .
\end{equation*}
Assume now that \eqref{a25} holds until some $[ \mu ] \geq 0$. We write $A^{[\mu] +1} \varphi (\lambda P) = A A^{[\mu]} \varphi (\lambda P)$ and try to commute $A$ with the right hand side of \eqref{a25}. If $A$ commutes really, we get
\begin{align*}
\sum_{J + K \leq [\mu]} \sum_{\fract{n_{\bullet} \geq 1}{\text{finite}}} (\lambda P+1)^{- n_{0}} \prod_{j=1}^{J} & \Big( \lambda (P+R) (\lambda P+1)^{- n_{j}} \Big)   \\
&(\lambda P + 1)^{-1} \prod_{k=1}^{K} \Big( A (\lambda P+1)^{-1} \Big) A (\lambda P+1)^{-1} (\lambda P+1) \widehat{\varphi} (\lambda P) ,
\end{align*}
which is of the required type. Its remains to study the commutator. We remark that $[A , P +R ] = 2 i ( P +R )$ and
\begin{equation} \label{a23}
\big[ A , (\lambda P+1)^{-1} \big] = - 2 i (\lambda P+1)^{-1} \lambda ( P +R ) (\lambda P+1)^{-1} .
\end{equation}
In particular, the commutator between $A$ and $(\lambda P+1)^{- n_{0}} \prod_{j=1}^{J} \big( \lambda (P+R) (\lambda P+1)^{- n_{j}} \big)$ can be written as a finite sum of terms of the form
\begin{equation*}
(\lambda P+1)^{- \widetilde{n}_{0}} \prod_{j=1}^{\widetilde{J}} \Big( \lambda (P+R) (\lambda P+1)^{- \widetilde{n}_{j}} \Big) ,
\end{equation*}
with $\widetilde{J} = J$ or $\widetilde{J} = J+1$, which gives terms of the required type. Now, using \eqref{a23}, the commutator between $A$ and $(\lambda P + 1)^{-1} \prod_{k=1}^{K} \big( A (\lambda P+1)^{-1} \big)$ can be written as a finite sum of terms of the form
\begin{equation} \label{a33}
(\lambda P + 1)^{-1} \prod_{k=1}^{K_{1}} \Big( A (\lambda P+1)^{-1} \Big) \lambda ( P +R ) (\lambda P+1)^{-1} \prod_{k=1}^{K_{2}} \Big( A (\lambda P+1)^{-1} \Big) ,
\end{equation}
with $K_{1} + K_{2}= K$. Then, we commute to the right the $K$ operators $A$ of this equation. Using \eqref{a23} and $[A , P +R ] = 2 i ( P +R )$, \eqref{a33} becomes
\begin{equation*}
\sum_{\fract{\widehat{J} + \widehat{K} \leq K + 1}{\widehat{J} \geq 1}} \sum_{\fract{\widehat{n}_{\bullet} \geq 1}{\text{finite}}} (\lambda P+1)^{- \widehat{n}_{0}} \prod_{j=1}^{\widehat{J}} \Big( \lambda (P+R) (\lambda P+1)^{- \widehat{n}_{j}} \Big) A^{\widehat{K}} .
\end{equation*}
But, since $\widehat{K} \leq K \leq [ \mu ]$, we can apply the induction hypothesis to $A^{\widehat{K}} \widehat{\varphi} (\lambda P)$ and this term will contribute as the required type. Summing up, we have obtained \eqref{a25} for $[ \mu ] +1$ and then for all $[ \mu ] \in \N$. As $\lambda P ( \lambda P+1)^{-1} = 1 - ( \lambda P+1)^{-1}$, we get from \eqref{a25}
\begin{align}
A^{[\mu]} \varphi (\lambda P) = \sum_{J + K \leq [\mu]} \sum_{\fract{n_{\bullet} \geq 1}{\text{finite}}} (\lambda P+1)^{- n_{0}} \prod_{j=1}^{J} & \Big( \lambda R (\lambda P+1)^{- n_{j}} \Big)  \nonumber  \\
&(\lambda P + 1)^{-1} \prod_{k=1}^{K} \Big( A (\lambda P+1)^{-1} \Big) \widehat{\varphi} (\lambda P) . \label{a34}
\end{align}

From $A = b^{-1} x D b + x (D b^{-1}) b - i d /2 = \CO ( \< x \> ) \widetilde{\partial} + \CO (1)$ and Proposition \ref{a12}, we obtain
\begin{gather*}
\big\Vert A (\lambda P+1)^{- n} ( \lambda^{1/2} \widetilde{\partial}^{*} ) \< x \>^{-1} \big\Vert \lesssim \lambda^{-1/2 + \varepsilon} ,   \\
\big\Vert (\lambda P+1)^{- n} ( \lambda^{1/2} \widetilde{\partial}^{*} ) \< x \>^{-1} \big\Vert \lesssim \lambda^{-1/2+ \varepsilon} ,
\end{gather*}
for all $\varepsilon > 0$ and $n \in \N \setminus \{ 0 \}$ which gives first
\begin{equation*}
\big\Vert \< A \> (\lambda P + 1)^{- n} ( \lambda^{1/2} \widetilde{\partial}^{*} ) \< x \>^{-1} \big\Vert \lesssim \lambda^{-1/2 + \varepsilon} ,
\end{equation*}
and then, for all $0 \leq \nu \leq 1$,
\begin{equation} \label{a35}
\big\Vert \< A \>^{\nu} (\lambda P + 1)^{- n} ( \lambda^{1/2} \widetilde{\partial}^{*} ) u \big\Vert \lesssim \lambda^{- \nu /2 + \varepsilon} \big\Vert \< x \>^{\nu} u \big\Vert ,
\end{equation}
by interpolation. In a similar way, we estimate
\begin{equation*}
\big\Vert \< A \> (\lambda P+1)^{- n} \lambda^{1/2} \< x \>^{-1} \big\Vert \lesssim \lambda^{\varepsilon} .
\end{equation*}
By interpolation we obtain, for all $0 \leq \nu \leq 1$,
\begin{equation} \label{a36}
\big\Vert \< A \>^{\nu} (\lambda P+1)^{-1} \lambda^{1/2} \< x \>^{-1} u \big\Vert \lesssim \lambda^{\varepsilon} \Vert u \Vert .
\end{equation}
The same way, Proposition \ref{a13} yields
\begin{equation} \label{a43}
\big\Vert \< A \>^{\nu} \varphi ( \lambda P ) \< x \>^{- \nu} u \big\Vert \lesssim \lambda^{- \nu / 2 + \varepsilon} \Vert u \Vert .
\end{equation}Using Proposition \ref{a12} and Lemma \ref{reshalf}, we obtain by interpolation for $0 \leq  \nu \leq 1$
\begin{equation}
\begin{gathered}
\big\Vert \< x \>^{\nu} ( \lambda^{1/2} \widetilde{\partial} ) (\lambda P+1)^{-n} u \big\Vert \lesssim \lambda^{\varepsilon} \big\Vert \< x \>^{\nu} u \big\Vert ,     \\
\big\Vert \< x \>^{\nu} \lambda^{1/2} \< x \>^{-1} (\lambda P+1)^{- n} u \big\Vert \lesssim \lambda^{\nu /2 + \varepsilon} \Vert u \Vert ,  \\
\big\Vert \< x \>^{\nu} ( \lambda^{1/2} \widetilde{\partial} ) (\lambda P+1)^{-n} ( \lambda^{1/2} \widetilde{\partial}^{*} ) u \big\Vert \lesssim \lambda^{\varepsilon} \big\Vert \< x \>^{\nu} u \big\Vert ,     \\
\big\Vert \< x \>^{\nu} ( \lambda^{1/2} \widetilde{\partial} ) (\lambda P+1)^{-n} \lambda^{1/2} \< x \>^{-1} u \big\Vert \lesssim \lambda^{\nu /2 + \varepsilon} \Vert u \Vert ,  \\
\big\Vert \< x \>^{\nu} \lambda^{1/2} \< x \>^{-1} (\lambda P+1)^{- n} ( \lambda^{1/2} \widetilde{\partial}^{*} ) u \big\Vert \lesssim \lambda^{\varepsilon} \big\Vert \< x \>^{\nu} u \big\Vert ,       \\
\big\Vert \< x \>^{\nu} \lambda^{1/2} \< x \>^{-1} (\lambda P+1)^{- n} \lambda^{1/2} \< x \>^{-1} u \big\Vert \lesssim \lambda^{\nu /2 + \varepsilon} \Vert u \Vert .  \label{a37}
\end{gathered}
\end{equation}
We remark that $\lambda R$ is a finite sum of terms of the form $\widetilde{r}^{*} \CO (1 ) \widetilde{r}$ where $\widetilde{r}$ (resp. $\widetilde{r}^{*}$) is one of the operators $\lambda^{1/2} \< x \>^{-1}$ or $\lambda^{1/2} \widetilde{\partial}$ (resp. $\lambda^{1/2} \< x \>^{-1}$ or $\lambda^{1/2} \widetilde{\partial}^{*}$). Then, putting together \eqref{a35}, \eqref{a36} and \eqref{a37}, we find
\begin{equation} \label{a38}
\bigg\Vert \< A \>^{\mu -[ \mu ]} (\lambda P+1)^{- n_{0}} \prod_{j=1}^{J} \Big( \lambda R (\lambda P+1)^{- n_{j}} \Big) \bigg\Vert \lesssim \lambda^{-\frac{\mu -[ \mu ]}{2} + \varepsilon} \big\Vert \< x \>^{\mu -[ \mu ]} u \big\Vert + \lambda^{\varepsilon} \Vert u \Vert .
\end{equation}

It remains to estimate
\begin{equation*}
\< x \>^{\nu} (\lambda P + 1)^{-1} \prod_{k=1}^{K} \Big( A (\lambda P+1)^{-1} \Big) \widehat{\varphi} (\lambda P) ,
\end{equation*}
for $\nu = 0$ and $\nu = \mu -[ \mu ]$. As $A = \widetilde{\partial}^{*} \CO ( \< x \> ) + \CO (1)$, Proposition \ref{a12} gives, for $\alpha \geq d/2$,
\begin{align*}
\big\Vert \< x \>^{\alpha} (\lambda P+1)^{-1} A u \big\Vert \lesssim{}& \lambda^{-1/2 + \varepsilon} \big\Vert \< x \>^{\alpha +1} u \big\Vert + \lambda^{\varepsilon} \big\Vert \< x \>^{\alpha} u \big\Vert  \\
&+ \lambda^{\alpha /2-d/4-1/2 + \varepsilon} \big\Vert \< x \>^{d/2+1} u \big\Vert + \lambda^{\alpha /2-d/4 + \varepsilon} \big\Vert \< x \>^{d/2} u \big\Vert ,
\end{align*} 
and, for $\alpha \leq d/2$,
\begin{equation*}
\big\Vert \< x \>^{\alpha} (\lambda P+1)^{-1} A u \big\Vert \lesssim \lambda^{-1/2 + \varepsilon} \big\Vert \< x \>^{\alpha +1} u \big\Vert + \lambda^{\varepsilon} \big\Vert \< x \>^{\alpha} u \big\Vert .
\end{equation*} 
Therefore, we obtain
\begin{equation*}
\big\Vert \< x \>^{\alpha} (\lambda P+1)^{-1} A u \big\Vert \lesssim \sum_{\fract{2 a + b = \alpha}{0 \leq b \leq \alpha +1}} \lambda^{a + \varepsilon} \big\Vert \<x\>^{b} u \big\Vert .
\end{equation*}
Combining with \eqref{a34} and \eqref{a38}, this gives
\begin{align}
\big\Vert \< A \>^{\mu -[ \mu ]} A^{[ \mu ]} \varphi (\lambda P) u \big\Vert \lesssim{}& \sum_{\fract{2 a+b= \mu -[ \mu ]}{0\leq b \leq \mu}} \lambda^{- \frac{\mu -[ \mu ]}{2}} \lambda^{a + \varepsilon} \big\Vert \< x \>^{b} \widehat{\varphi} (\lambda P) u \big\Vert  \nonumber  \\
&+ \sum_{\fract{2a^{\prime} + b^{\prime} =0}{0 \leq b^{\prime} \leq [ \mu ]}} \lambda^{a^{\prime} + \varepsilon} \big\Vert \< x \>^{b^{\prime}} \widehat{\varphi} (\lambda P) u \big\Vert  \nonumber   \\
=&: S_{1} + S_{2} . \label{a39}
\end{align}

$\star$ $1^{\text{st}}$ case: $\mu \geq d/2$. Using Proposition \ref{a13}, we obtain for $b < d/2$
\begin{equation*}
\big\Vert \< x \>^{b} \widehat{\varphi} (\lambda P) u \big\Vert \lesssim \lambda^{b/2 - d/4 + \varepsilon} \big\Vert \< x \>^{d/2} u \big\Vert .
\end{equation*}
Therefore, we get
\begin{equation} \label{a40}
S_1 \lesssim \sum_{\fract{2 a+b= \mu -[ \mu ]}{d /2 \leq b \leq \mu}} \lambda^{- \frac{\mu -[ \mu ]}{2}} \lambda^{a + \varepsilon} \big\Vert \< x \>^{b} u \big\Vert .
\end{equation}
Note that we have $a = \frac{- b + \mu -[ \mu ]}{2} \leq -d/4 + \frac{\mu -[ \mu ]}{2}$. In the same manner, we find
\begin{equation} \label{a41}
S_2 \lesssim \sum_{\fract{2a^{\prime} + b^{\prime} =0}{d/2 \leq b^{\prime} \leq [ \mu ]}} \lambda^{a^{\prime} + \varepsilon} \big\Vert \< x \>^{b^{\prime}} u \big\Vert .
\end{equation}
Here we have $a^{\prime} = - \frac{b^{\prime}}{2} \leq - d / 4$.

$\star$ $2^{\text{nd}}$ case: $\mu< d/2$. By Proposition \ref{a13} we have, for $\alpha \leq \mu$,
\begin{equation*}
\big\Vert \< x \>^{\alpha} \widehat{\varphi} (\lambda P) u \big\Vert \lesssim \lambda^{\frac{\alpha - \mu}{2} + \varepsilon} \big\Vert \< x \>^{\mu} u \big\Vert .
\end{equation*}
Therefore, we get
\begin{equation} \label{a42}
S_{1} \lesssim \lambda^{-\frac{\mu}{2} + \varepsilon} \big\Vert \< x \>^{\mu} u \big\Vert \quad \text{ and } \quad S_{2} \lesssim \lambda^{-\frac{\mu}{2} + \varepsilon} \big\Vert \< x \>^{\mu} u \big\Vert .
\end{equation}

Putting \eqref{a39}, \eqref{a40}, \eqref{a41} and \eqref{a42} together, we eventually obtain
\begin{equation} \label{a44}
\big\Vert \< A \>^{\mu -[ \mu ]} A^{[ \mu ]} \varphi (\lambda P) \< x \>^{- \mu} \big\Vert \lesssim \lambda^{- \frac{1}{2} \min ( \mu , d/2 ) + \varepsilon} .
\end{equation}

$\bullet$ Let us now estimate the second term in \eqref{jb}. Let $\widetilde{\varphi}$ be a function with the same properties as $\varphi$ such that $\widetilde{\varphi} \varphi = \varphi$. From \eqref{a43}, we get
\begin{align*}
\big\Vert \< A \>^{\mu -[ \mu ]} \varphi (\lambda P) \< x \>^{- \mu} \big\Vert &\lesssim \big\Vert \< A \>^{\mu -[ \mu ]} \widetilde{\varphi} (\lambda P) \< x \>^{- \mu +[ \mu ]} \big\Vert \big\Vert \< x \>^{\mu -[ \mu ]} \varphi (\lambda P) \< x \>^{- \mu} \big\Vert    \\
&\lesssim \lambda^{-\frac{\mu -[ \mu ]}{2} + \varepsilon} \big\Vert \< x \>^{\mu -[ \mu ]} \varphi (\lambda P) \< x \>^{- \mu} \big\Vert .
\end{align*}
We have to distinguish two cases:

$\star$ $1^{\text{st}}$ case: $\mu \geq d/2$. Then, we have by Proposition \ref{a12}
\begin{equation*}
\big\Vert \< x \>^{\mu -[ \mu ]} \varphi (\lambda P) \< x \>^{- \mu} \big\Vert \lesssim \lambda^{- \frac{d}{4} + \frac{\mu -[ \mu ]}{2} + \varepsilon} .
\end{equation*}

$\star$ $2^{\text{nd}}$ case: $\mu< d/2$. Again by Proposition \ref{a12}, we find
\begin{equation*}
\big\Vert \< x \>^{\mu -[ \mu ]} \varphi (\lambda P) \< x \>^{- \mu} \big\Vert \lesssim \lambda^{- \frac{[ \mu ]}{2} + \varepsilon} .
\end{equation*}
Putting everything together, we find
\begin{equation} \label{a45}
\big\Vert \< A \>^{\mu -[ \mu ]} \varphi (\lambda P) \< x \>^{- \mu} \big\Vert \lesssim \lambda^{- \frac{1}{2} \min ( \mu ,d/2 ) + \varepsilon} ,
\end{equation}
and the lemma follows from \eqref{jb}, \eqref{a44} and \eqref{a45}.
\end{proof}

\Subsection{Proof of Theorems \ref{th1} and \ref{th4}} \label{a21}

We start by proving Theorem \ref{th4}. We can clearly suppose that $a_{0} = 0$ and $a_{1} > 0$. We will make a dyadic decomposition of the low frequencies. To do so, we will consider $\varphi \in C^{\infty}_{0} ( ] 0 , + \infty [ )$ such that
\begin{equation} \label{c13}
\forall x \in ]0,1] \qquad \sum_{1 \leq \lambda \text{ dyadic}} \varphi ( \lambda x) = 1.
\end{equation}
To $\varphi$ we associate $\widetilde{\varphi} \in C^{\infty}_{0} ( ] 0 , + \infty [ )$ such that $\widetilde{\varphi} \varphi = \varphi$.

Let now $I \Subset ] 0 , + \infty [$ be an open interval such that
\begin{equation*}
\supp \varphi + [ - h , h ] \subset I ,
\end{equation*}
for some $h > 0$ small enough. From the assumptions on $f$, there exists, for $\lambda$ large enough, an increasing function $\psi_{\lambda} \in C^{\infty} ( \R )$, uniformly bounded in $\lambda$ with all its derivates, such that
\begin{equation} \label{a50}
\psi_{\lambda} (x) =
\left\{ \begin{aligned}
& 0 &&\text{for } x \leq 0 , \\
& a_{1} x^{\alpha} + \lambda^{- \nu} x^{\alpha + \nu} g \Big( \frac{x}{\lambda} \Big) \quad &&\text{for } x \in I ,  \\
& C &&\text{for } x \geq B ,
\end{aligned} \right.
\end{equation}
where $B , C >0$ are positive constants. In particular, there exists $\delta > 0$ such that
\begin{equation*}
\forall x \in I \qquad \psi_{\lambda}^{\prime} (x) \geq \delta ,
\end{equation*}
for $\lambda$ sufficiently large. Then, $\psi_{\lambda}$ satisfies all the assumptions of Section \ref{SME}.

On the other hand, since $\psi_{\lambda} (x) = a_{1} x^{\alpha} + \CO ( \lambda^{- \nu} )$ for $x \in I$, there exist open intervals $\widetilde{J}$ and $J$ such that
\begin{equation*}
\psi_{\lambda} ( \supp \varphi ) \subset \widetilde{J} \Subset J \subset \psi_{\lambda} (I) ,
\end{equation*}
for $\lambda$ large enough. Let $\tau \in C^{\infty}_{0} (J)$ be such that $\tau =1$ on $\widetilde{J}$. As $\psi_{\lambda}$ is increasing, we get
\begin{equation} \label{a46}
\varphi (x) \prec \tau ( \psi_{\lambda} (x) ) \prec \one_{J} ( \psi_{\lambda} (x) ) \prec \one_{I} (x) ,
\end{equation}
where $f \prec g$ means that $g =1$ near the support of $f$.

Since $\psi_{\lambda}$ satisfies the assumptions of Section \ref{SME}, we can apply Proposition \ref{PM1} $ii)$ and find that, for $\lambda$ large enough, the Mourre estimate \eqref{M7} holds on the interval $I$. Therefore a Mourre estimate for the operator $\psi_{\lambda}(\lambda P)$ holds on $J$:
\begin{align*}
\one_{J} (\psi_{\lambda} (\lambda P)) i \big[ \psi_{\lambda} (\lambda & P) , A \big] \one_{J} (\psi_{\lambda} (\lambda P))  \\
&= \one_{J} (\psi_{\lambda} (\lambda P)) \one_{I}(\lambda P) i \big[ \psi_{\lambda} (\lambda P) , A \big] \one_{I} (\lambda P) \one_{J} (\psi_{\lambda} (\lambda P))  \\
&\geq \delta (\inf I) \one_{J} (\psi_{\lambda} (\lambda P) ).
\end{align*}
Then, from Proposition \ref{PM1} $i)$ and Theorem \ref{a16}, there exists $\lambda_{0} \geq 1$ such that for all $\mu , \varepsilon > 0$
\begin{equation} \label{a47}
\big\Vert \< A \>^{- \mu} e^{i t \psi_{\lambda} (\lambda P)} \tau ( \psi_{\lambda} (\lambda P) ) u \big\Vert \lesssim \lambda^{\varepsilon} \< t \>^{- \mu} \big\Vert \< A \>^{\mu} u \big\Vert ,
\end{equation}
uniformly for $\lambda \geq \lambda_{0}$ and $t \in \R$.

Let us now note that it is sufficient to prove Theorem \ref{th4} for $\supp \chi \subset [- \varepsilon , \varepsilon ]$ for all $\varepsilon >0$ small enough. Indeed for $0 \notin \supp \chi$ we can divide the support of $\chi$ into a finite number of intervals and use directly the result of Hunziker, Sigal and Soffer on each of these intervals since $P$ has no eigenvalues (see e.g. Donnelly \cite[Corollary 5.4]{Do97_01}). Furthermore, for $\varepsilon >0$ small enough, we have
\begin{equation} \label{a49}
\chi (x) = \sum_{\lambda_{0} \leq \lambda \text{ dyadic}} \varphi ( \lambda x) \chi (x) .
\end{equation}
Note also that \eqref{a50} and \eqref{a46} imply
\begin{equation} \label{a48}
e^{i t f(P)} \varphi ( \lambda P) = e^{i t \lambda^{-\alpha} \psi_{\lambda} (\lambda P)} \tau ( \psi_{\lambda} ( \lambda P) ) \varphi ( \lambda P) .
\end{equation}
We have to distinguish two cases:

$\star$ $1^{\text{st}}$ case: $\alpha \leq 1$. Using \eqref{a47}, \eqref{a48} and Proposition \ref{PM1} $iii)$, we get
\begin{align*}
\big\Vert \< x \>^{- \frac{d}{2\alpha}} e^{i t \lambda^{\alpha} f(P)} \varphi(\lambda P) u \big\Vert
&\lesssim \big\Vert \< x \>^{- \frac{d}{2\alpha}} \widetilde{\varphi} (\lambda P) \< A \>^{\frac{d}{2\alpha}} \< A \>^{- \frac{d}{2\alpha}} e^{i t \lambda^{\alpha} f(P)} \varphi (\lambda P) u \big\Vert   \\
&\lesssim \lambda^{- \frac{d}{4} + \frac{\widetilde{\varepsilon}}{3}} \big\Vert \< A \>^{- \frac{d}{2\alpha}} e^{i t \psi_{\lambda}(\lambda P)} \tau ( \psi_{\lambda} (\lambda P)) \varphi (\lambda P) u \big\Vert   \\
&\lesssim  \lambda^{- \frac{d}{4} + \frac{2 \widetilde{\varepsilon}}{3}} \< t \>^{- \frac{d}{2\alpha}} \big\Vert \< A \>^{\frac{d}{2\alpha}} \varphi(\lambda P) \< x \>^{- \frac{d}{2\alpha}} \big\Vert \big\Vert \< x \>^{\frac{d}{2\alpha}} u \big\Vert  \\
&\lesssim \lambda^{- \frac{d}{2} + \widetilde{\varepsilon}} \< t \>^{- \frac{d}{2\alpha}} \big\Vert \< x \>^{\frac{d}{2\alpha}} u \big\Vert ,
\end{align*}
for all $\widetilde{\varepsilon} >0$. Using \eqref{a49}, we then estimate
\begin{align*}
\big\Vert \< x \>^{- \frac{d}{2\alpha}} e^{i t f(P)} \chi (P) u \big\Vert &\lesssim \sum_{\lambda_{0} \leq \lambda \text{ dyadic}} \big\Vert \< x \>^{- \frac{d}{2\alpha}} e^{i( \lambda^{-\alpha} t) \lambda^{\alpha} f (P)} \varphi (\lambda P) \chi (P) u \big\Vert    \\
&\lesssim \sum_{\lambda_{0} \leq \lambda \text{ dyadic}} \frac{\lambda^{- \frac{d}{2} + \widetilde{\varepsilon}}}{1 + ( \lambda^{-\alpha} t)^{\frac{d}{2\alpha} - \varepsilon}} \big\Vert \< x \>^{\frac{d}{2\alpha}} \chi (P) u \big\Vert  \\
&\lesssim \sum_{\lambda_{0} \leq \lambda \text{ dyadic}} \frac{\lambda^{\widetilde{\varepsilon} - \alpha \varepsilon}}{\lambda^{\frac{d}{2} - \varepsilon \alpha} + t^{\frac{d}{2\alpha} - \varepsilon}} \big\Vert \< x \>^{\frac{d}{2\alpha}} \chi (P) u \big\Vert      \\
&\lesssim \sum_{\lambda_{0} \leq \lambda \text{ dyadic}} \frac{\lambda^{\widetilde{\varepsilon} - \alpha \varepsilon}}{\< t \>^{\frac{d}{2\alpha} - \varepsilon}} \big\Vert \< x \>^{\frac{d}{2\alpha}} \chi (P) u \big\Vert   \\
&\lesssim  \<t\>^{- \frac{d}{2 \alpha} + \varepsilon} \big\Vert \< x \>^{\frac{d}{2 \alpha}} \chi (P) u \big\Vert \lesssim \< t \>^{- \frac{d}{2 \alpha} + \varepsilon} \big\Vert \< x \>^{\frac{d}{2 \alpha}} u \big\Vert ,
\end{align*}
where we have chosen $\widetilde{\varepsilon}$ small enough with respect to $\varepsilon$.

$\star$ $2^{\text{nd}}$ case: $\alpha> 1$. We proceed in the same manner. We first get
\begin{equation} \label{ag1}
\big\Vert \< x \>^{- \frac{d}{2}} e^{i t \lambda^{\alpha} f (P)} \varphi (\lambda P) u \big\Vert
\lesssim \lambda^{- \frac{d}{2} + \widetilde{\varepsilon}} \< t \>^{- \frac{d}{2}} \big\Vert \< x \>^{\frac{d}{2}} u \big\Vert .
\end{equation}
We then estimate
\begin{align*}
\big\Vert \< x \>^{- \frac{d}{2}} e^{i t f (P)} \chi (P) u \big\Vert &\lesssim \sum_{\lambda_{0} \leq \lambda \text{ dyadic}} \big\Vert \< x \>^{- \frac{d}{2}} e^{i ( \lambda^{- \alpha} t) \lambda^{\alpha} f (P)} \varphi ( \lambda P) \chi (P) u \big\Vert  \\
&\lesssim \sum_{\lambda_{0} \leq \lambda \text{ dyadic}} \frac{\lambda^{- \frac{d}{2} + \widetilde{\varepsilon}}}{1 + ( \lambda^{- \alpha} t)^{\frac{d}{2\alpha} - \varepsilon}} \big\Vert \< x \>^{\frac{d}{2}} \chi (P) u \big\Vert  \\
&\lesssim \sum_{\lambda_{0} \leq \lambda \text{ dyadic}} \frac{\lambda^{\widetilde{\varepsilon} - \alpha \varepsilon}}{\lambda^{\frac{d}{2} - \varepsilon \alpha} + t^{\frac{d}{2 \alpha} - \varepsilon}} \big\Vert \< x \>^{\frac{d}{2}} \chi (P) u \big\Vert    \\
&\lesssim \sum_{\lambda_{0} \leq \lambda \text{ dyadic}} \frac{\lambda^{\widetilde{\varepsilon} - \alpha \varepsilon}}{\< t \>^{\frac{d}{2 \alpha} - \varepsilon}} \big\Vert \< x \>^{\frac{d}{2}} \chi (P) u \big\Vert  \\
&\lesssim  \< t \>^{- \frac{d}{2 \alpha} + \varepsilon} \big\Vert \< x \>^{\frac{d}{2}} \chi (P) u \big\Vert \lesssim \< t \>^{- \frac{d}{2 \alpha} + \varepsilon} \big\Vert \< x \>^{\frac{d}{2}} u \big\Vert ,
\end{align*}
where we have again chosen $\widetilde{\varepsilon}$ small enough with respect to $\varepsilon$. This finishes the proof of Theorem \ref{th4}.

Let us now explain how Theorem \ref{th1} follows from Theorem \ref{th4}. The estimate \eqref{wave2} follows from the case $\alpha = 1/2$. To prove the estimate \eqref{wave1}, we have to notice that by dividing by $\sqrt{P}$ we loose an additional $\lambda^{1/2}$ on $\varphi ( \lambda P )$. To compensate this, we only use the  $\< \lambda^{-1/2} t \>^{1-d+ \varepsilon}$ decay instead of the $\< \lambda^{-1/2} t\>^{-d + \varepsilon}$ decay. To prove estimate \eqref{KG} we use the Taylor development of $\sqrt{1+P}$ close to zero and find $\alpha = 1$. The estimates \eqref{Schr} and \eqref{4Schr1} correspond to the case $\alpha = 1$, the estimate \eqref{4Schr2} corresponds to $\alpha = 2$.

\section{High frequency estimates} \label{a53}

\Subsection{Abstract setting} \label{a24}

Here we obtain a semiclassical result which is used in the next section to prove the high frequency estimates of Theorem \ref{a28}. For that, we use the semiclassical microlocal analysis (see Dimassi and Sj\"{o}strand \cite{DiSj99_01} for more details). We work with the $\sigma$-temperate metric
\begin{equation*}
\gamma = \frac{d x^{2}}{\< x \>^{2}} + \frac{d \xi^{2}}{\< \xi \>^{2}} .
\end{equation*}
For $m(x, \xi )$ a weight function, let $S_{h}(m)$ be the set of functions $f(x, \xi ;h) \in C^{\infty} (\R^{d} \times \R^{d} )$ such that
\begin{equation*}
\vert \partial_{x}^{\alpha} \partial_{\xi}^{\beta} f (x, \xi ; h) \vert \lesssim m (x, \xi ) \< x \>^{- \vert \alpha \vert} \< \xi \>^{- \vert \beta \vert} ,
\end{equation*}
for all $\alpha , \beta \in \N^{d}$. In fact, $S_{h} (m)$ is the space of semiclassical symbols of weight $m$ for the metric $\gamma$. For $f \in S_{h}( m)$, the pseudodifferential operator with symbol $f$ is given by
\begin{equation*}
\big( \oph (f) u \big) (x) = \frac{1}{( 2 \pi h)^{d}} \int e^{i ( x - y , \xi ) / h} f \Big( \frac{x+ y}{2} , \xi ; h \Big) u (y) \, d y \, d \xi .
\end{equation*}
Let $\Psi_{h} (m) = \oph ( S_{h}(m) )$ denote the set of semiclassical pseudodifferential operators whose symbols are in $S_{h} (m)$.

We consider $Q_{h} = \oph (q) \in \Psi_{h} (1)$ such that
\begin{equation*}
q (x , \xi ; h) = q_{0}(x , \xi ) + h^{\delta} \Psi_{h} (1) ,
\end{equation*}
for some $0 < \delta \leq 1$. Let $I \Subset ]0 , + \infty [$ be an open interval. We assume that for $(x, \xi ) \in q_{0}^{-1} (I)$, we have $q_{0} (x, \xi ) = ( p_{0} (x, \xi ) )^{\alpha}$ where $\alpha >0$ and $p_{0} = \sum_{j,k} b^{2} (x) G_{j,k} (x) \xi_{j} \xi_{k}$ is the principal symbol of $P$.

Since $P$ is non-trapping by assumption \eqref{a51}, the positive energies are non-trapping for $p_{0}$. Then, using a result of G\'erard and Martinez \cite{GeMa88_01} and a compactness argument, one can construct a symbol $a (x, \xi ) \in S_{h} ( \< x \> )$ such that $a (x , \xi ) = x \cdot \xi$ for $(x, \xi ) \in p_{0}^{-1} (I^{1/ \alpha} )$ with $\vert x \vert$ large enough and
\begin{equation} \label{a17}
\{ p_{0} , a \} > \varepsilon_{0} ,
\end{equation}
on $p_{0}^{-1} (I^{1/ \alpha} )$ with $\varepsilon_{0} >0$. We then define
\begin{equation*}
A_{h} = \oph ( a ) \in \Psi_{h} ( \< x \> ) .
\end{equation*}

\begin{proposition}\sl \label{a14}
$i)$ The operator $A_{h}$ is essentially self-adjoint on $C^{\infty}_{0} ( \R^{d} )$ and, for all $\mu \geq 0$,
\begin{equation*}
\big\Vert \< A_{h} \>^{\mu} u \big\Vert \lesssim \big\Vert \< x \>^{\mu} u \big\Vert .
\end{equation*}

$ii)$ We have $Q_{h} \in C^{\infty} ( A_{h} )$ and, for all $j \geq 1$,
\begin{equation*}
\big\Vert \ad^{j}_{A_{h}} Q_{h} \big\Vert \lesssim h .
\end{equation*}

$iii)$ There exists $\varepsilon >0$ such that, for all $J \Subset I$ and $h$ small enough,
\begin{equation} \label{a18}
\one_{J} (Q_{h}) i [ Q_{h} , A_{h} ] \one_{J} (Q_{h}) \geq \varepsilon h \one_{J} (Q_{h}) .
\end{equation}
\end{proposition}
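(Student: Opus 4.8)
The three statements are essentially semiclassical versions of the low‑frequency facts, and the natural route is to prove each one by symbolic calculus for the class $\Psi_h(m)$, using the ellipticity hypothesis \eqref{a3} and the non‑trapping construction of $a$ with \eqref{a17}. First, for $i)$, since $a\in S_h(\<x\>)$ is real‑valued and $\<x\>$ is a $\sigma$‑temperate order function, $A_h=\oph(a)$ is essentially self‑adjoint on $C^\infty_0(\R^d)$ by the standard Beals–type / Calderón–Vaillancourt argument together with the fact that $a$ is classically elliptic in the $x$‑variable only where needed; the bound $\Vert\<A_h\>^\mu u\Vert\lesssim\Vert\<x\>^\mu u\Vert$ follows by writing $\<A_h\>^\mu=\oph(\<a\>^\mu)+\text{lower order}$ via the functional calculus of Helffer–Sjöstrand for pseudodifferential operators (or by interpolating the integer cases $\mu=0,1,2,\dots$, which reduce to $\Vert A_h^k u\Vert\lesssim\Vert\<x\>^k u\Vert$, itself immediate since $a=\CO(\<x\>)$ and $A_h^k\in\Psi_h(\<x\>^k)$), and then using the $L^2$‑boundedness of $\oph(\<a\>^\mu\<x\>^{-\mu})\in\Psi_h(1)$.

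For $ii)$, one observes that $Q_h\in\Psi_h(1)$ and $A_h\in\Psi_h(\<x\>)$, so that $[Q_h,A_h]\in h\,\Psi_h(\<x\>^{-1}\cdot\<x\>)=h\,\Psi_h(1)$ with principal symbol $\frac{h}{i}\{q,a\}$; since $\{q,a\}\in S_h(1)$ (the weight $\<x\>$ from $a$ is killed by the $\partial_x$ falling on $q\in S_h(1)$, which gains $\<x\>^{-1}$), Calderón–Vaillancourt gives $\Vert[Q_h,A_h]\Vert\lesssim h$. Iterating, each further commutator with $A_h$ again costs one power of $h$ and stays in $\Psi_h(1)$ because the structure $\{\cdot,a\}$ repeatedly trades the $\<x\>$ against a $\partial_x$, so $\Vert\ad^j_{A_h}Q_h\Vert\lesssim h$ for all $j\geq1$; the regularity $Q_h\in C^\infty(A_h)$ is then automatic from the $C^\infty$ symbolic calculus (the map $t\mapsto e^{itA_h}(Q_h-z)^{-1}e^{-itA_h}$ is smooth). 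Here a small point to be careful about is that $q-q_0\in h^\delta\Psi_h(1)$ only, but this merely contributes terms of size $h^{1+\delta}$ or better to the commutators, hence is harmless.

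For $iii)$, the Mourre estimate, the plan is the usual G\'erard–Martinez scheme: on $q_0^{-1}(I)$ we have $q_0=p_0^\alpha$, so $\{q_0,a\}=\alpha p_0^{\alpha-1}\{p_0,a\}\geq\alpha\,(\inf_{p_0^{-1}(I^{1/\alpha})}p_0^{\alpha-1})\,\varepsilon_0>0$ by \eqref{a17}; thus the principal symbol of $i[Q_h,A_h]$ is bounded below by a positive constant times $h$ on a neighborhood of $q_0^{-1}(\overline J)$. One then inserts cutoffs: pick $\theta\in C^\infty_0(I)$ equal to $1$ near $\overline J$, and write $\one_J(Q_h)=\one_J(Q_h)\theta(Q_h)$; by the functional calculus $\theta(Q_h)\in\Psi_h(1)$ is microlocalized in $q_0^{-1}(\operatorname{supp}\theta)$, so $\theta(Q_h)i[Q_h,A_h]\theta(Q_h)=\oph(b)+h^{1+\delta'}\Psi_h(1)$ with $b\geq c h$ on the relevant region and $b\geq 0$ elsewhere up to $\CO(h^\infty)$; the sharp Gårding inequality then yields $\theta(Q_h)i[Q_h,A_h]\theta(Q_h)\geq \tfrac{c}{2}h\,\theta(Q_h)^2+\CO(h^{1+\delta'})$, and sandwiching by $\one_J(Q_h)$ and absorbing the error for $h$ small gives \eqref{a18}. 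The main obstacle — and the only genuinely delicate point — is controlling the subprincipal and remainder terms in this Gårding argument uniformly in $h$ while keeping everything microlocalized in the non‑trapping energy shell; this is exactly where the compactness argument behind the existence of $a$ and the uniform lower bound $\{p_0,a\}>\varepsilon_0$ are used, and where one must check that the lower‑order piece $h^\delta\Psi_h(1)$ of $q$ does not spoil the positivity.
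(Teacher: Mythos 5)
Your proof follows essentially the same route as the paper: pseudodifferential symbolic calculus in the classes $\Psi_h(m)$ for parts $i)$ and $ii)$, and the G{\aa}rding inequality applied after sandwiching by an energy cutoff (your $\theta(Q_h)$, the paper's $\varphi(Q_h)$) for part $iii)$. The only small imprecision is in part $i)$: the paper deduces essential self-adjointness of $A_h$ from Nelson's commutator theorem with comparison operator $\< x \>$, which is the appropriate criterion for this unbounded first-order operator, whereas Beals/Calder\'on--Vaillancourt alone are boundedness results, not self-adjointness criteria; this does not affect the rest of your argument.
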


\begin{proof}
$i)$ From Nelson's theorem (see Reed and Simon \cite[Theorem X.36]{ReSi78_01}) with the operator of comparison $\< x \>$, $A_{h}$ is essentially self-adjoint on $C^{\infty}_{0} ( \R^{d} )$. Moreover, for $\mu$ even, $\< A_{h} \>^{\mu} = (1 + A_{h}^{2} )^{\mu /2} \in \Psi_{h} ( \< x \>^{\mu} )$ by the pseudodifferential calculus. Then $\< A_{h} \>^{\mu} \< x \>^{- \mu} \in \Psi_{h} (1)$ is a bounded operator. The general case, $\mu \geq 0$, follows from an interpolation argument.

$ii)$ Since $Q_{h} \in \Psi_{h} (1)$, the Beals lemma shows that $( Q_{h} + i)^{-1} \in \Psi_{h} (1)$. In particular, $( Q_{h} + i)^{-1}$ preserves the domain of $\< x \>$. On the other hand, using $A_{h} \in \Psi_{h} ( \< x \> )$, the pseudodifferential calculus gives that $\ad^{j}_{A_{h}} Q_{h} \in \Psi_{h} ( h^{j} )$. In particular, by the Calderon and Vaillancourt theorem, we have
\begin{equation*}
\big\Vert \ad^{j}_{A_{h}} Q_{h} \big\Vert \lesssim h .
\end{equation*}

$iii)$ Let $\varphi \in C^{\infty}_{0} (I)$ with $\varphi = 1$ on $J$. By the functional calculus of the pseudodifferential operators, $\varphi (Q_{h})$ satisfies
\begin{equation*}
\varphi ( Q_{h} ) = \oph ( \varphi ( p_{0}^{\alpha} ) ) + h^{\delta} \Psi_{h} (1).
\end{equation*}
Thus, using the composition rules of pseudodifferential operators,
\begin{equation*}
\varphi ( Q_{h} ) i [Q_{h} , A_{h} ] \varphi ( Q_{h} ) = h \oph \big( \varphi ( p_{0}^{\alpha} ) \alpha p_{0}^{\alpha -1} \{ p_{0} , a \} \varphi ( p_{0}^{\alpha} ) \big) + h^{1 + \delta} \Psi_{h} (1) .
\end{equation*}
On the support of $\varphi ( p_{0}^{\alpha} )$, we have $\alpha p_{0}^{\alpha -1} \{ p_{0} , a \} \geq 2 \varepsilon$ from \eqref{a17}, for some $\varepsilon >0$. Then, the G{\aa}rding inequality implies
\begin{align*}
\varphi ( Q_{h} ) i [Q_{h} , A_{h} ] \varphi ( Q_{h} ) &\geq 2 \varepsilon h \oph \big( \varphi^{2} ( p_{0}^{\alpha} ) \big) - \CO ( h^{1 + \delta} )  \\
&= 2 \varepsilon h \varphi^{2} ( Q_{h} ) - \CO ( h^{1 + \delta} ) .
\end{align*}
Since $\varphi =1$ on $J$, we eventually obtain \eqref{a18} for $h$ small enough.
\end{proof}

\begin{proposition}\sl \label{a15}
Let $\varphi \in C^{\infty}_{0} ( I )$ and $\mu \geq 0$. Then, there exists $h_{0} > 0$ such that
\begin{equation*}
\big\Vert \< x \>^{- \mu} e^{i t Q_{h}} \varphi ( Q_{h} ) \< x \>^{- \mu} \big\Vert \lesssim \< h t \>^{- \mu} ,
\end{equation*}
uniformly for $0 < h < h_{0}$ and $t \in \R$.
\end{proposition}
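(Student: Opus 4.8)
The plan is to obtain Proposition \ref{a15} from the uniform Hunziker--Sigal--Soffer estimate of Theorem \ref{a16}, applied not to $Q_h$ itself but to the rescaled pair $(H , A_h)$ with $H := h^{-1} Q_h$. The virtue of this rescaling is scale invariance of the relevant data. Indeed, $\ad^j_{A_h} H = h^{-1} \ad^j_{A_h} Q_h$, so Proposition \ref{a14} $ii)$ gives $\Vert \ad^j_{A_h} H \Vert \lesssim 1$ uniformly in $h$; dividing \eqref{a18} by $h$ gives a Mourre estimate for $H$ on the ($h$-dependent) interval $h^{-1} J$ with the \emph{fixed} constant $\varepsilon$; and $H \in C^\infty (A_h)$ again by Proposition \ref{a14} $ii)$. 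I would choose the Mourre interval $J$ in Proposition \ref{a14} $iii)$ so that $\supp \varphi \Subset J \Subset I$, and write $e^{i t Q_h} = e^{i (h t) H}$ together with $\varphi (Q_h) = g_h (H)$, where $g_h (\sigma) := \varphi (h \sigma) \in C^\infty_0 ( h^{-1} J)$ satisfies $\Vert g_h^{(k)} \Vert_\infty \leq \Vert \varphi^{(k)} \Vert_\infty$. Theorem \ref{a16} applied to $(H , A_h)$ then yields, uniformly for $h$ small and $t \in \R$,
\[
\big\Vert \< A_h \>^{- \mu} e^{i t Q_h} \varphi (Q_h) u \big\Vert \lesssim \< h t \>^{- \mu} \big\Vert \< A_h \>^{\mu} u \big\Vert .
\]

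It then remains to trade the $\< A_h \>$-weights for the $\< x \>$-weights of the statement. By Proposition \ref{a14} $i)$, $\< A_h \>^{\mu} \< x \>^{- \mu}$ is bounded uniformly in $h$; taking adjoints and writing $v = \< A_h \>^{\mu} \big( \< A_h \>^{- \mu} v \big)$ gives $\Vert \< x \>^{- \mu} v \Vert \lesssim \Vert \< A_h \>^{- \mu} v \Vert$ for all $v$. For the incoming weight, I would pick $\widetilde{\varphi} \in C^\infty_0 (I)$ with $\widetilde{\varphi} \varphi = \varphi$, so $\varphi (Q_h) = \varphi (Q_h) \widetilde{\varphi} (Q_h)$; since $\widetilde{\varphi} (Q_h) \in \Psi_h (1)$ by the semiclassical functional calculus, the operator $\< x \>^{\mu} \widetilde{\varphi} (Q_h) \< x \>^{- \mu} \in \Psi_h (1)$ is bounded uniformly in $h$, whence $\Vert \< A_h \>^{\mu} \widetilde{\varphi} (Q_h) \< x \>^{- \mu} \Vert \lesssim 1$ using Proposition \ref{a14} $i)$ once more. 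Chaining these with the displayed estimate gives
\[
\big\Vert \< x \>^{- \mu} e^{i t Q_h} \varphi (Q_h) \< x \>^{- \mu} w \big\Vert \lesssim \big\Vert \< A_h \>^{- \mu} e^{i t Q_h} \varphi (Q_h) \widetilde{\varphi} (Q_h) \< x \>^{- \mu} w \big\Vert \lesssim \< h t \>^{- \mu} \Vert w \Vert ,
\]
the value of $h_0$ being the one furnished by Proposition \ref{a14} $iii)$.

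The only genuinely delicate issue is the uniformity in $h$ of the Hunziker--Sigal--Soffer constant, and this is exactly what the rescaling $Q_h \rightsquigarrow h^{-1} Q_h$ is designed to handle: it replaces the degenerating data of $Q_h$ (Mourre gap $\varepsilon h$, commutators of size $h$) by the $h$-independent data of $H$, at the cost of turning the physical time $t$ into $h t$ --- which is precisely the variable appearing in the target rate $\< h t \>^{- \mu}$. A residual point is that the spectral window $h^{-1} J$ of $H$ drifts off to infinity as $h \to 0$; here one relies on the fact that the polynomial $P_{\mu , \chi}$ in Theorem \ref{a16} (with the constants of \cite{HuSiSo99_01} tracked as there) is insensitive to the position of that window, depending on the cutoff only through finitely many sup-norms of its derivatives, all of which remain bounded for $g_h = \varphi ( h \, \cdot )$. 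Everything else --- the two weight conversions and the $\Psi_h (1)$ mapping property of $\widetilde{\varphi}(Q_h)$ --- is routine semiclassical pseudodifferential calculus of the kind already used to prove Proposition \ref{a14}.
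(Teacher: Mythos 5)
Your argument and the paper's are essentially the same; the rescaling $H = h^{-1} Q_h$ is a clean and transparent repackaging of what the paper does implicitly. The paper also invokes Theorem \ref{a16} and then re-reads \cite{HuSiSo99_01} directly: with the Mourre gap $\varepsilon h$ and commutators of size $\CO(h)$, the authors note that \cite[Lemma 2.1]{HuSiSo99_01} holds with its right-hand side multiplied by $h$, which amounts to replacing $e^{iHt}$ by $e^{iHt/h}$ in the proof of \cite[Theorem 1.1]{HuSiSo99_01} and yields the rate $\< ht \>^{-\mu}$. Your rescaling achieves exactly this by making the data of $H$ be $\CO(1)$ and the time variable $ht$, which is the neater way to say it.

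The subtlety you flag is genuine and is shared by both proofs: after rescaling, the Mourre interval $h^{-1} J$ and the cutoff $g_h = \varphi(h\,\cdot\,)$ drift with $h$, so Theorem \ref{a16} as stated (fixed interval, fixed $\chi$) does not apply as a black box. Resolving this requires, as you indicate, checking inside \cite{HuSiSo99_01} that the constants depend on the cutoff only through finitely many sup-norms of its derivatives --- uniformly bounded since $\Vert g_h^{(k)} \Vert_\infty = h^k \Vert \varphi^{(k)} \Vert_\infty \leq \Vert \varphi^{(k)} \Vert_\infty$ --- and not on the location of the spectral window. So the rescaling relocates, rather than removes, the careful re-reading of HSS that the paper also acknowledges it performs. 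One small simplification: the detour through $\widetilde{\varphi}(Q_h) \in \Psi_h(1)$ for the incoming weight is unnecessary, since with $u = \< x \>^{-\mu} w$ Proposition \ref{a14} $i)$ alone gives $\Vert \< A_h \>^{\mu} u \Vert \lesssim \Vert w \Vert$, which is how the paper closes the argument.
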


\begin{proof}
We apply Theorem \ref{a16} of Hunziker, Sigal and Soffer. Reading carefully the paper \cite{HuSiSo99_01} and using Proposition \ref{a14}, one can see that, in this semiclassical setting,
\begin{equation} \label{a19}
\big\Vert \< A_{h} \>^{- \mu} e^{i t Q_{h}} \varphi ( Q_{h} ) \< A_{h} \>^{- \mu} \big\Vert \lesssim \< h t \>^{- \mu} ,
\end{equation}
for all $\mu >0$ and $h$ small enough. More precisely, \cite[Lemma 2.1]{HuSiSo99_01} holds with the right hand side of \cite[(2.1)]{HuSiSo99_01} multiplied by $h$. And in the proof of \cite[Theorem 1.1]{HuSiSo99_01}, $e^{i H t}$ is replaced by $e^{i H \frac{t}{h}}$. Now we can replace $\< A_{h} \>^{- \mu}$ by $\< x \>^{- \mu}$ in \eqref{a19} using Proposition \ref{a14} $i)$ and the proposition follows.
\end{proof}

\Subsection{Proof of Theorems \ref{glest} and \ref{a28}}

We start by proving Theorem \ref{a28}.

$i)$ Let $\widehat{\varphi} \in C^{\infty}_{0} ( ]0 , + \infty [)$ and $\widetilde{\varphi} \in C^{\infty} ( \R ; \R )$ be such that $\varphi (x) \prec \widehat{\varphi} (x^{\alpha}) \prec \widetilde{\varphi} ( x )$ ($f \prec g$ means that $g =1$ near the support of $f$). Then, we can write
\begin{equation} \label{a29}
e^{i t f (P)} \varphi ( h^{2} P ) = e^{i t h^{-2 \alpha} Q_{h}} \varphi ( h^{2} P ) \quad \text{ with } \quad Q_{h} = h^{2 \alpha} f (P) \widetilde{\varphi} ( h ^{2} P ) = k_{h} (h^{2} P) ,
\end{equation}
and $k_{h} (x) = ( x^{\alpha} + h^{2 \nu} x^{\alpha - \nu} g (h^{-2} x ) ) \widetilde{\varphi} ( x )$. We can take $\widetilde{\varphi}$ such that $k_{h}$ is an increasing function satisfying $k_{h} =0$ near $0$ and such that $k_{h} (x)$ is constant for $x$ large enough. In particular, using that $g \big( \frac{1}{x} \big) \in C^{\infty} ([ 0, 1 [)$, $k_{h}$ is $C^{\infty}$ and all its derivatives are bounded uniformly with respect to $h$. From the properties of $k_{h}$, the functional calculus of the pseudodifferential operators implies that $Q_{h} = k_{h} (h^{2} P)$ satisfies the assumptions of Section \ref{a24} with $I = \supp \widehat{\varphi}$. Then, Proposition \ref{a15} implies
\begin{equation} \label{a30}
\big\Vert \< x \>^{- \mu} e^{i t Q_{h}} \widehat{\varphi} ( Q_{h} ) \< x \>^{- \mu} \big\Vert \lesssim \< h t \>^{- \mu} .
\end{equation}
Moreover, the spectral theorem gives $\varphi ( h^{2} P ) = \widehat{\varphi} (Q_{h} ) \varphi ( h^{2} P )$ for $h$ small enough. Then, \eqref{a29} and \eqref{a30} yield
\begin{align}
\big\Vert \< x \>^{- \mu} e^{i t f (P)} \varphi ( h^{2} P ) \< x \>^{- \mu} \big\Vert &= \big\Vert \< x \>^{- \mu} e^{i t h^{- 2 \alpha} Q_{h}} \widehat{\varphi} ( Q_{h} ) \< x \>^{- \mu} \< x \>^{\mu} \varphi ( h^{2} P ) \< x \>^{- \mu} \big\Vert \nonumber \\
&\lesssim \big\< h^{1 -2 \alpha} t \big\>^{- \mu} ,
\end{align}
since $\< x \>^{\mu} \varphi ( h^{2} P ) \< x \>^{- \mu} = \CO (1)$ by semiclassical pseudodifferential calculus. This shows \eqref{a26}.

$ii)$ We now prove \eqref{a27} and assume first $\alpha \leq 1/2$. There exists $\varphi \in C^{\infty}_{0} ( ] 0 , + \infty [ ; [ 0 , + \infty [)$ such that
\begin{equation*}
\sum_{\fract{h^{2} \text{ dyadic}}{0 < h < 1}} \varphi^{3} (h^{2} x) = 1 ,
\end{equation*}
for $x \in [ 1 , + \infty [$. Let $h_{0} >0$ and $\chi \in C^{\infty}_{0}( \R )$ be such that \eqref{a26} holds for $0 < h < h_{0}$ and
\begin{equation} \label{a20}
( 1 - \chi ) (x) = ( 1 - \chi ) (x) \sum_{\fract{h^{2} \text{ dyadic}}{0 < h < h_{0}}} \varphi^{3} (h^{2} x) .
\end{equation}

From the functional calculus of the pseudodifferential operators, the support of the symbol of $\varphi ( h^{2} P ) \in \Psi_{h} (1)$ is inside $\supp \varphi ( p_{0} )$ modulo $\Psi_{h} ( h^{\infty} \< x , \xi \>^{- \infty} )$. Then, for $\varphi \prec \widetilde{\varphi} \in C^{\infty}_{0} ( ] 0 , + \infty [ )$,
\begin{equation*}
\< x \>^{\mu} \varphi ( h^{2} P ) = \varphi ( h^{2} P ) \< x \>^{\mu} + \CO (h) \widetilde{\varphi} ( h^{2} P ) \< x \>^{\mu} + \Psi_{h} \big( h^{\infty} \< x , \xi \>^{- \infty} \big) .
\end{equation*}
In particular,
\begin{align}
\big\Vert \< x \>^{- \mu} e^{i t f (P)} \varphi^{2} ( h^{2} P ) u \big\Vert \lesssim{}& \big\< h^{1 - 2 \alpha} t \big\>^{- \mu} \big\Vert \< x \>^{\mu} \varphi ( h^{2} P) u \big\Vert  \nonumber  \\
\lesssim{}&  h^{2 \alpha \mu - \mu} \< t \>^{- \mu} \big\Vert \< x \>^{\mu} \varphi ( h^{2} P) u \big\Vert  \nonumber  \\
\lesssim{}&  h^{2 \alpha \mu - \mu} \< t \>^{- \mu} \Big( \big\Vert \varphi ( h^{2} P) \< x \>^{\mu} u \big\Vert + h \big\Vert \widetilde{\varphi} (h^{2} P) \< x \>^{\mu} u \big\Vert  \nonumber  \\
&+ \big\Vert \Psi_{h} \big( h^{\infty} \< x , \xi \>^{- \infty} \big) u \big\Vert \Big)  \nonumber  \\
\lesssim{}& \< t \>^{- \mu} \Big( \big\Vert \widetilde{\varphi} ( h^{2} P) (h^{2} P)^{\alpha \mu - \mu /2} \varphi ( h^{2} P) \< P \>^{\mu /2 - \alpha \mu} \< x \>^{\mu} u \big\Vert  \nonumber  \\
&+ h \big\Vert \widetilde{\varphi} (h^{2} P) (h^{2} P)^{\alpha \mu - \mu /2} \< P \>^{\mu /2 - \alpha \mu} \< x \>^{\mu} u \big\Vert + h \big\Vert \< x \>^{\mu} u \big\Vert_{H^{\mu - 2 \alpha \mu} ( \R^{d} )} \Big)  \nonumber  \\
\lesssim{}& \< t \>^{- \mu} \big\Vert \varphi (h^{2} P) \< P \>^{\mu /2 - \alpha \mu} \< x \>^{\mu} u \big\Vert + h \< t \>^{- \mu} \big\Vert \< x \>^{\mu} u \big\Vert_{H^{\mu - 2 \alpha \mu} ( \R^{d} )}  \label{a31} \\
\lesssim{}& \< t \>^{- \mu} \big\Vert \< x \>^{\mu} u \big\Vert_{H^{\mu - 2 \alpha \mu} ( \R^{d} )}. \label{a32}
\end{align}
Using $\< x \>^{- \mu} \varphi ( h^{2} P ) = \varphi ( h^{2} P ) \< x \>^{- \mu} + \CO (h)  \< x \>^{- \mu}$, \eqref{a20},  \eqref{a31} and \eqref{a32}, we obtain
\begin{align*}
\big\Vert \< x & \>^{- \mu} e^{i t f(P)} ( 1 - \chi ) (P) u \big\Vert^{2} \\
&= \bigg\Vert \<x\>^{- \mu} ( 1 - \chi ) (P) \<x\>^{\mu} \sum_{\fract{h^{2} \text{ dyadic}}{0 < h < h_{0}}} \<x\>^{- \mu} e^{i t f(P)} \varphi^{3} (h^{2} P) u \bigg\Vert^{2}  \\
&\lesssim \bigg\Vert \sum_{\fract{h^{2} \text{ dyadic}}{0 < h < h_{0}}} \<x\>^{- \mu} e^{i t f(P)} \varphi^{3} (h^{2} P) u \bigg\Vert^{2}  \\
&\lesssim \bigg\Vert \sum_{\fract{h^{2} \text{ dyadic}}{0 < h < h_{0}}} \varphi (h^{2} P) \<x\>^{- \mu} e^{i t f(P)} \varphi^{2} (h^{2} P) u \bigg\Vert^{2} + \bigg( \sum_{\fract{h^{2} \text{ dyadic}}{0 < h < h_{0}}} h \bigg\Vert \<x\>^{- \mu} e^{i t f(P)} \varphi^{2} (h^{2} P) u \bigg\Vert \bigg)^{2}  \\
&\lesssim \sum_{\fract{h^{2} \text{ dyadic}}{0 < h < h_{0}}} \big\Vert \<x\>^{- \mu} e^{i t f(P)} \varphi^{2} (h^{2} P) u \big\Vert^{2} + \bigg( \sum_{\fract{h^{2} \text{ dyadic}}{0 < h < h_{0}}} h \< t \>^{- \mu} \big\Vert \< x \>^{\mu} u \big\Vert_{H^{\mu - 2 \alpha \mu} ( \R^{d} )} \bigg)^{2}  \\
&\lesssim \sum_{\fract{h^{2} \text{ dyadic}}{0 < h < h_{0}}} \< t \>^{- 2 \mu} \big\Vert \varphi (h^{2} P) \< P \>^{\mu /2 - \alpha \mu} \< x \>^{\mu} u \big\Vert^{2} + \< t \>^{- 2 \mu} \big\Vert \< x \>^{\mu} u \big\Vert_{H^{\mu - 2 \alpha \mu} ( \R^{d} )}^{2}  \\
&\lesssim \< t \>^{- 2 \mu} \bigg\Vert \sum_{\fract{h^{2} \text{ dyadic}}{0 < h < h_{0}}} \varphi (h^{2} P) \< P \>^{\mu /2 - \alpha \mu} \< x \>^{\mu} u \bigg\Vert^{2} + \< t \>^{- 2 \mu} \big\Vert \< x \>^{\mu} u \big\Vert_{H^{\mu - 2 \alpha \mu} ( \R^{d} )}^{2}  \\
&\lesssim \< t \>^{- 2 \mu} \big\Vert \< x \>^{\mu} u \big\Vert_{H^{\mu - 2 \alpha \mu} ( \R^{d} )}^{2} .
\end{align*}
Here, we have used the quasi-orthogonality of the $\varphi ( h^{2} P )$'s and
\begin{equation*}
\sum \big\Vert \varphi ( h^{2} P ) v \big\Vert^{2} \leq \Big\Vert \sum \varphi ( h^{2} P ) v \Big\Vert^{2} ,
\end{equation*}
since $\varphi \geq 0$.

In the case $\alpha > 1/2$, we use
\begin{equation*}
\big\< h^{1 - 2 \alpha} t \big\>^{- \mu} \lesssim h^{2 \alpha \mu - \mu} \vert t \vert^{- \mu} ,
\end{equation*}
and
\begin{align*}
h^{2 \alpha \mu - \mu} \big\Vert \varphi (h^{2} P )u \big\Vert &\lesssim \big\Vert h^{2 \alpha \mu - \mu} (P^{\alpha \mu - \mu/2} + i ) \< P \>^{\mu /2 - \alpha \mu} \varphi (h^{2} P ) u \big\Vert  \\
&\lesssim \big\Vert \< P \>^{\mu /2 - \alpha \mu} \varphi (h^{2} P ) u \big\Vert \lesssim \big\Vert \varphi (h^{2} P ) u \big\Vert_{H^{\mu - 2 \alpha \mu} ( \R^{d} )}^{2} .
\end{align*}
Since the rest of the proof is similar, we omit the details.

We now explain how Theorem \ref{glest} follows from Theorems \ref{th1} and \ref{a28}. We only prove \eqref{wave3} since the other estimates can be treated the same way. Let $\chi , \widetilde{\chi} , \widehat{\chi} \in C^{\infty}_{0} ( \R )$ be such that $\one_{\{ 0 \}} \prec \widehat{\chi} \prec \chi \prec \widetilde{\chi}$. Using \eqref{wave1} for the low frequencies, \eqref{a27} with $\mu = d-1$ and $\alpha = 1/2$ for the high frequencies and the pseudodifferential calculus, we obtain
\begin{align}
\Big\Vert \< x \>^{1-d} \frac{\sin t\sqrt{P}}{\sqrt{P}} u \Big\Vert_{H^1 ( \R^{d} )} \lesssim{}& \Big\Vert \< P \>^{1/2} \< x \>^{1-d} \frac{\sin t\sqrt{P}}{\sqrt{P}} u \Big\Vert   \nonumber  \\
\lesssim{}& \Big\Vert \< P \>^{1/2} \< x \>^{1-d} \frac{\sin t\sqrt{P}}{\sqrt{P}} \chi (P) u \Big\Vert  \nonumber \\
&+ \Big\Vert \< P \>^{1/2} \< x \>^{1-d} \frac{\sin t\sqrt{P}}{\sqrt{P}} (1- \chi ) (P) u \Big\Vert  \nonumber \\
\lesssim{}& \big\Vert \< P \>^{1/2} \< x \>^{1-d} \widetilde{\chi} (P) \< x \>^{d-1} \big\Vert \Big\Vert \< x \>^{1-d} \frac{\sin t\sqrt{P}}{\sqrt{P}} \chi (P) u \Big\Vert   \nonumber  \\
&+ \Big\Vert \< P \>^{1/2} \< x \>^{1-d} \frac{(1 - \widehat{\chi} ) (P)}{\sqrt{P}} \< x \>^{d-1} \Big\Vert \big\Vert \< x \>^{1-d} \sin t\sqrt{P} (1- \chi ) (P) u \big\Vert \nonumber   \\
\lesssim{}& \< t \>^{1 - d + \varepsilon} \big\Vert \< x \>^{d-1} u \big\Vert .
\end{align}

\section{Hardy type estimates} \label{a1}

In this section we prove some Hardy type estimates which are slight generalizations of those obtained in \cite{BoHa10_01}. These estimates hold in all dimensions $d \geq 1$. Note that Vasy and Wunsch \cite{VaWu09_01} have also obtained Hardy type estimates for scattering manifolds. We begin with a generalization of Lemma B.1 of \cite{BoHa10_01} to the case $\gamma + \beta /2 > d/4$.

\begin{lemma}\sl \label{a2}
Let $0 \leq \beta$, $0 \leq \gamma \leq \min ( 1 , d/4)$ and $0 \leq \delta \leq d /4$. Then, for all $\varepsilon >0$, we have
\begin{equation} \label{ha3}
\big\Vert \< x \>^{\beta} ( \lambda P_{0} + 1)^{-1} u \big\Vert \lesssim \lambda^{- \gamma + \varepsilon} \big\Vert \< x \>^{\beta + 2 \gamma} u \big\Vert + \lambda^{\beta /2 - \delta + \varepsilon} \big\Vert \< x \>^{2 \delta} u \big\Vert ,
\end{equation}
uniformly for $\lambda \geq 1$.
\end{lemma}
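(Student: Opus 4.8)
The plan is to adapt the proof of \cite[Lemma B.1]{BoHa10_01}, everything resting on the explicit kernel of the comparison resolvent. Since $P_{0} = - \Delta$, one has $( \lambda P_{0} + 1 )^{-1} = \lambda^{-1} ( - \Delta + \lambda^{-1} )^{-1}$, a Bessel (Yukawa) potential whose integral kernel is $G_{\lambda} ( x - y )$ with $G_{\lambda} \geq 0$, radial, and, for every $N \in \N$,
\begin{equation*}
0 \leq G_{\lambda} ( z ) \lesssim_{N} \lambda^{-1} \big\< \lambda^{- 1/2} z \big\>^{- N} \times
\begin{cases}
\vert z \vert^{2 - d} & \text{if } d \geq 3 , \\
1 + \log_{+} ( \sqrt{\lambda} / \vert z \vert ) & \text{if } d = 2 , \\
\sqrt{\lambda} & \text{if } d = 1 .
\end{cases}
\end{equation*}
In particular, modulo a rapidly decaying remainder, $( \lambda P_{0} + 1 )^{-1}$ is carried by the region $\vert x - y \vert \lesssim \sqrt{\lambda}$, on which $\< x \> \lesssim \< y \> + \vert x - y \vert$ and hence $\< x \>^{\beta} \lesssim \< y \>^{\beta} + \vert x - y \vert^{\beta}$.

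Plugging the last inequality into the kernel of $\< x \>^{\beta} ( \lambda P_{0} + 1 )^{-1}$, I would split it, up to the rapidly decaying tail, into a ``diagonal'' piece with kernel $\lesssim \< y \>^{\beta} G_{\lambda} ( x - y )$ and an ``off--diagonal'' piece with kernel $\lesssim \vert x - y \vert^{\beta} G_{\lambda} ( x - y ) \one_{\vert x - y \vert \lesssim \sqrt{\lambda}}$. For $d \geq 3$ the elementary identity $\vert z \vert^{2 - d} \one_{\vert z \vert \lesssim \sqrt{\lambda}} = \vert z \vert^{2 - 2 \sigma} \, \vert z \vert^{2 \sigma - d} \one_{\vert z \vert \lesssim \sqrt{\lambda}} \lesssim \lambda^{1 - \sigma} \vert z \vert^{2 \sigma - d}$, valid for $0 \leq \sigma \leq 1$, dominates $\lambda^{-1} G_{\lambda}$ pointwise by $\lambda^{- \sigma}$ times the (positive) kernel of $( - \Delta )^{- \sigma}$; combined with Herbst's inequality $\big\Vert \vert x \vert^{- 2 \sigma} ( - \Delta )^{- \sigma} \big\Vert_{L^{2} \to L^{2}} < \infty$ for $0 < 2 \sigma < d / 2$ and with $\vert x \vert \leq \< x \>$, the choice $\sigma = \gamma$ bounds the diagonal piece by $\lambda^{- \gamma} \big\Vert \< x \>^{2 \gamma} \< \cdot \>^{\beta} u \big\Vert = \lambda^{- \gamma} \big\Vert \< x \>^{\beta + 2 \gamma} u \big\Vert$, the first term of \eqref{ha3}. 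For the off--diagonal piece one treats $\vert z \vert^{\beta + 2 - d} \one_{\vert z \vert \lesssim \sqrt{\lambda}}$ likewise: when $\beta + 2 \geq 2 \delta$ one writes it as $\vert z \vert^{\beta + 2 - 2 \delta} \vert z \vert^{2 \delta - d} \one_{\vert z \vert \lesssim \sqrt{\lambda}} \lesssim \lambda^{(\beta + 2 - 2 \delta)/2} \vert z \vert^{2 \delta - d}$, so the piece is dominated by $\lambda^{\beta/2 - \delta} ( - \Delta )^{- \delta}$, and Herbst's inequality for $( - \Delta )^{- \delta} \< x \>^{- 2 \delta}$ ($2 \delta < d / 2$) yields the second term $\lambda^{\beta/2 - \delta} \Vert \< x \>^{2 \delta} u \Vert$. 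The dimensions $d = 1 , 2$ are handled in the same way, the factor $\sqrt{\lambda}$ and the logarithm (which is $\CO_{\eta} ( ( \vert z \vert / \sqrt{\lambda} )^{- \eta} )$ for any $\eta > 0$) being absorbed in the $\lambda^{\varepsilon}$.

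The hypotheses $0 \leq \gamma \leq \min ( 1 , d / 4 )$ and $0 \leq \delta \leq d / 4$ are precisely what makes this work: the restriction $\sigma \leq 1$ is needed in the truncation bound for $\vert z \vert^{2 - d} \one_{\vert z \vert \lesssim \sqrt{\lambda}}$ (the resolvent being of order $2$), and $2 \sigma < d / 2$ is the range of Herbst's inequality. I expect the main obstacle to be the bookkeeping of the several resulting terms together with the boundary cases: at $\gamma = d/4$ or $\delta = d/4$ Herbst's inequality degenerates, so one reruns the argument with the exponent replaced by $d/4 - \varepsilon$, which only strengthens the weight $\< x \>^{\bullet}$ and costs the announced $\lambda^{\varepsilon}$; and in the remaining sub-range $\beta + 2 < 2 \delta$ of the off--diagonal piece (which can occur only when $\beta$ is small and $d \geq 5$) one argues slightly differently, either by keeping $\vert x - y \vert^{\beta + 2 - d}$ as the truncated kernel of the order-$( \beta + 2 )$ Riesz potential, or, more robustly, by replacing the pointwise domination arguments above with a direct application of Schur's lemma with a well-chosen polynomial weight $\< x \>^{p}$. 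Once these endpoint issues are dealt with, the estimate is a routine combination of the kernel bound and the classical Hardy inequality, exactly as in \cite[Appendix B]{BoHa10_01}.
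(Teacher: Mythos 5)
Your proposal follows the same skeleton as the paper's proof: both start from the explicit kernel of $(\lambda P_0+1)^{-1}$, both use the same splitting of $\<x\>^\beta k_d(x-y,\lambda)$ into a ``diagonal'' piece with kernel $\ell_1(x-y)\<y\>^\beta$ and an ``off-diagonal'' piece with kernel $\ell_2(x-y)\sim\lambda^{-1}|x-y|^{\beta+2-d}e^{-r/2}$, and both reduce the lemma to $L^2$ bounds on the two resulting convolution operators. Where you diverge is in the tool used to estimate these convolutions: you pointwise-dominate the kernels by truncated Riesz kernels $\lambda^{-\gamma}|z|^{2\gamma-d}$ and $\lambda^{\beta/2-\delta}|z|^{2\delta-d}$ and invoke Herbst's inequality $\Vert |x|^{-2\sigma}(-\Delta)^{-\sigma}\Vert_{L^2\to L^2}<\infty$ ($2\sigma<d/2$), whereas the paper estimates the $L_1$ piece by citing the earlier Lemma B.1/(B.9) of \cite{BoHa10_01}, and for the $L_2$ piece combines Young's inequality $\Vert\ell_2*u\Vert_2\leq\Vert\ell_2\Vert_q\Vert u\Vert_p$ (with $p,q$ chosen so that $\lambda^{d/2q}=\lambda^{d/2-\delta+\varepsilon}$) with H\"older's inequality $\Vert u\Vert_p\lesssim\Vert\<x\>^{2\delta}u\Vert_2$. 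Both routes produce the same two terms with the same $\lambda^\varepsilon$ loss at the endpoints.

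One place where your account is less tidy than the paper's is the sub-range $\beta+2<2\delta$: your truncation step $|z|^{\beta+2-2\delta}\one_{|z|\lesssim\sqrt\lambda}\lesssim\lambda^{(\beta+2-2\delta)/2}$ genuinely fails there, and your proposed fallbacks (Riesz potential of order $\beta+2$, or a vaguely specified weighted Schur argument) do not obviously reproduce the claimed $\lambda^{\beta/2-\delta}\Vert\<x\>^{2\delta}u\Vert$ term. The paper sidesteps the issue at the outset: since the first term alone already proves the lemma when $\gamma+\beta/2\leq d/4$ (this is exactly Lemma B.1 of \cite{BoHa10_01}), one may assume $\gamma+\beta/2>d/4$, hence $\beta>d/2-2\geq 2\delta-2$, so $\beta+2>2\delta$ always. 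Adding this reduction to your argument would close the gap and make the Herbst-based route fully watertight. Apart from this, the proposal is correct and the endpoint treatment ($\gamma=d/4$ or $\delta=d/4$) by replacing the exponent with $d/4-\varepsilon$ is the right move.
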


\begin{proof}
To obtain this result, we mimic the proof of Lemma B.1 of \cite{BoHa10_01}. From that Lemma, \eqref{ha3} holds without its last term if $\gamma + \beta /2 \leq d/4$. So, we can assume that $\gamma + \beta /2 > d/4$ and then $\beta > d/2 - 2$.

Using an explicit formula for the kernel of $( \lambda P_{0} +1 )^{-1}$ and some properties of the Hankel functions, it has been shown in \cite[(B.5)]{BoHa10_01} that the kernel of $\< x \>^{\beta} ( \lambda P_{0} +1 )^{-1}$, written $\< x \>^{\beta} k_{d} (x - y , \lambda )$, satisfies
\begin{equation} \label{ha5}
\big\vert \< x \>^{\beta} k_{d} (x-y , \lambda ) \big\vert \lesssim \ell_{1} (x-y , \lambda ) \< y \>^{\beta} + \ell_{2} (x-y , \lambda ) ,
\end{equation}
where, using the notation $r = \lambda^{- 1/2}  \vert x - y \vert$,
\begin{equation*}
\ell_{1} (x-y , \lambda ) = \lambda^{-\frac{d}{2}} e^{- r/2} g_{d} (r)
\quad \text{ and } \quad
\ell_{2} (x-y , \lambda ) = \lambda^{-\frac{d}{2} + \frac{\beta}{2}} e^{- r/2} r^{\beta} g_{d} (r) ,
\end{equation*}
with
\begin{equation*}
g_{d} (r) = \left\{ \begin{aligned}
& 1 && \text{for } d =1 , \\
& \< \ln r \> && \text{for } d =2 , \\
& r^{2 - d} && \text{for } d \geq 3 .
\end{aligned} \right.
\end{equation*}
It is then enough to estimate the operators $L_{1} , L_{2}$ whose kernels are $\ell_{1} ( x - y , \lambda ) , \ell_{2} ( x - y , \lambda )$. From \cite[(B.9)]{BoHa10_01}, we have
\begin{equation} \label{ha4}
\Vert L_{1} \< x \>^{\beta} u \Vert \lesssim \lambda^{- \gamma + \varepsilon} \big\Vert \< x \>^{\beta + 2 \gamma} u \big\Vert.
\end{equation}

Let us now consider $L_{2}$. If $\delta =0$, the Young inequality and \eqref{ha5} give
\begin{equation*}
\Vert L_{2} u \Vert_{2} = \Vert \ell_{2} *  u \Vert_{2} \leq \Vert \ell_{2} \Vert_{1} \Vert u \Vert_{2} \lesssim \lambda^{\frac{\beta}{2}} \Vert u \Vert_{2} ,
\end{equation*}
where $\Vert \cdot \Vert_{p}$ designs the standard norm on $L^{p} ( \R^{n} )$. Assume now that $\delta > 0$. One more time, the Young inequality implies
\begin{equation*}
\Vert L_{2} u \Vert_{2} = \Vert \ell_{2} *  u \Vert_{2} \leq \Vert \ell_{2} \Vert_{q} \Vert u \Vert_{p} ,
\end{equation*}
for $1 \leq p \leq 2$ and $\frac{1}{q} = \frac{3}{2} - \frac{1}{p}$. We choose $p = \frac{2 d}{d + 4 ( \delta - \varepsilon)}$ and we have $q= \frac{d}{d - 2 ( \delta - \varepsilon)}$. For $\varepsilon$ small enough and $0 < \delta \leq d / 4$, the condition $1 \leq p \leq 2$ is fulfilled. Moreover, the condition $\beta > d/2 - 2$ and \eqref{ha5} imply that $\ell_{2} \in L^{r}( \R^{d} )$ for all $1 \leq r \leq 2$. In particular, $\ell_{2} \in L^{q}( \R^{d} )$ and
\begin{equation} \label{a7}
\Vert L_{2} u \Vert_{2} \lesssim \lambda^{\frac{\beta}{2}- \frac{d}{2}} \lambda^{\frac{d}{2 q}} \Vert u \Vert_{p} = \lambda^{\frac{\beta}{2}- \delta + \varepsilon} \Vert u \Vert_{p} .
\end{equation}
Using the H\"{o}lder inequality, we obtain
\begin{equation} \label{a6}
\Vert u \Vert_{p} \leq \Big( \int \vert u \vert^{p s} \< x \>^{\alpha s} d x \Big)^{1/p s} \Big( \int \< x \>^{- \alpha t} d x \Big)^{1/p t} ,
\end{equation}
with $\frac{1}{s} + \frac{1}{t} =1$. We choose $s = \frac{2}{p} \geq 1$ and $\alpha = 2 p \delta$. In particular, $\alpha t = \frac{d \delta}{\delta - \varepsilon} > d$ and the last term in the previous estimate is finite. Then, \eqref{a6} becomes
\begin{equation} \label{a8}
\Vert u \Vert_{p} \lesssim \big\Vert \< x \>^{2 \delta} u \big\Vert_{2} .
\end{equation}
Combining \eqref{a7} and \eqref{a8}, we finally obtain
\begin{equation} \label{a9}
\Vert L_{2} u \Vert_{2} \lesssim \lambda^{\frac{\beta}{2}- \delta + \varepsilon} \big\Vert \< x \>^{2 \delta} u \big\Vert_{2}.
\end{equation}
and the lemma follows from \eqref{ha4} and \eqref{a9}.
\end{proof}

Now, using the same ideas and mimicking the proofs of Lemma B.2 and Lemma B.3 of \cite{BoHa10_01}, one can show the following estimates for the free Laplacian.

\begin{lemma}\sl \label{a10}
Let $j \in \{1 , \ldots , d \}$, $0 \leq \beta$, $0 \leq \gamma \leq \min ( 1/2 , d/4)$ and $0 \leq \delta \leq d /4$. Then, for all $\varepsilon >0$, we have
\begin{equation*}
\big\Vert \< x \>^{\beta} (\lambda^{1/2} \partial_{j}) ( \lambda P_{0} +1)^{- 1} u \big\Vert \lesssim \lambda^{- \gamma + \varepsilon} \big\Vert \< x \>^{\beta + 2 \gamma} u \big\Vert + \lambda^{\beta /2 - \delta + \varepsilon} \big\Vert \< x \>^{2 \delta} u \big\Vert ,
\end{equation*}
uniformly for $\lambda \geq 1$.
\end{lemma}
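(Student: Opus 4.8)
The plan is to follow, almost verbatim, the proof of Lemma~\ref{a2} and the proofs of Lemmas~B.2 and~B.3 of \cite{BoHa10_01}. The starting point is the explicit kernel of $(\lambda P_{0}+1)^{-1}$: writing $P_{0}=-\Delta$ and $(\lambda P_{0}+1)^{-1}=\lambda^{-1}(P_{0}+\lambda^{-1})^{-1}$, this kernel equals $c_{d}\,\lambda^{-d/2}\,r^{-(d-2)/2}K_{d/2-1}(r)$ with $r=\lambda^{-1/2}|x-y|$ and $K_{\nu}$ the Macdonald function. Differentiating in $x_{j}$ and using $\frac{d}{dr}\big(r^{-\nu}K_{\nu}(r)\big)=-r^{-\nu}K_{\nu+1}(r)$ with $\nu=d/2-1$, the kernel of $(\lambda^{1/2}\partial_{j})(\lambda P_{0}+1)^{-1}$ is $-c_{d}\,\lambda^{-d/2}\,r^{-(d-2)/2}K_{d/2}(r)\,\frac{x_{j}-y_{j}}{|x-y|}$, whose last factor is bounded by $1$ and which behaves like $\lambda^{-d/2}r^{1-d}$ as $r\to 0$ and decays exponentially as $r\to\infty$. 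Thus, compared with the kernel of $(\lambda P_{0}+1)^{-1}$ (which is $\sim\lambda^{-d/2}r^{2-d}$ near the diagonal, with a $\langle\ln r\rangle$ in dimension~$2$), the first derivative costs exactly one power of $r$ near the origin, and incidentally removes the logarithm in low dimension; this is precisely the source of the restriction $\gamma\le 1/2$ rather than $\gamma\le 1$.

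First I would use $\langle x\rangle^{\beta}\lesssim\langle y\rangle^{\beta}+\langle x-y\rangle^{\beta}$ together with the exponential factor, exactly as in the derivation of the kernel estimate in \cite{BoHa10_01}, to obtain a pointwise bound
\[
\big|\langle x\rangle^{\beta}\,k_{d,j}(x-y,\lambda)\big|\lesssim \ell_{1}(x-y,\lambda)\langle y\rangle^{\beta}+\ell_{2}(x-y,\lambda),
\]
where $k_{d,j}$ is the kernel above, $\ell_{1}(x-y,\lambda)=\lambda^{-d/2}e^{-r/2}\widetilde g_{d}(r)$, $\ell_{2}(x-y,\lambda)=\lambda^{-d/2+\beta/2}e^{-r/2}r^{\beta}\widetilde g_{d}(r)$, and $\widetilde g_{d}(r)\lesssim\langle r^{1-d}\rangle$ plays the role of $g_{d}$ in Lemma~\ref{a2}. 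It then suffices to estimate the convolution operators $L_{1}$, $L_{2}$ with kernels $\ell_{1}(x-y,\lambda)$, $\ell_{2}(x-y,\lambda)$. For $L_{1}$, one notices that (for $d\ge 2$) $\ell_{1}(z,\lambda)\simeq\lambda^{-1/2}|z|^{1-d}e^{-|z|/2\sqrt{\lambda}}$, a truncated $(-\Delta)^{-1/2}$ kernel, and the bound
\[
\big\Vert L_{1}\langle x\rangle^{\beta}u\big\Vert\lesssim\lambda^{-\gamma+\varepsilon}\big\Vert\langle x\rangle^{\beta+2\gamma}u\big\Vert,\qquad 0\le\gamma\le\min(1/2,d/4),
\]
follows from the weighted Young--Hardy argument of \cite{BoHa10_01} (the analogue, for the first-order operator, of the estimate used in Lemma~\ref{a2}), which transcribes without change.

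It remains to treat $L_{2}$, which I would do exactly as in the proof of Lemma~\ref{a2}. If $\gamma+\beta/2\le d/4$, the last term of the asserted inequality is not needed by the $L_{1}$ bound, so one may assume $\gamma+\beta/2>d/4$, hence $\beta>d/2-1$; together with the near-origin behaviour $r^{\beta}\widetilde g_{d}(r)\sim r^{\beta+1-d}$ this guarantees $\ell_{2}\in L^{r}(\R^{d})$ for every $1\le r\le 2$. For $\delta=0$, Young's inequality and $\Vert\ell_{2}\Vert_{1}\lesssim\lambda^{\beta/2}$ give the claim. For $\delta>0$, Young's inequality $\Vert L_{2}u\Vert_{2}\le\Vert\ell_{2}\Vert_{q}\Vert u\Vert_{p}$ with $p=\frac{2d}{d+4(\delta-\varepsilon)}$ and $\frac{1}{q}=\frac{3}{2}-\frac{1}{p}$, the estimate $\Vert\ell_{2}\Vert_{q}\lesssim\lambda^{\beta/2-\delta+\varepsilon}$, and the H\"older bound $\Vert u\Vert_{p}\lesssim\Vert\langle x\rangle^{2\delta}u\Vert_{2}$ (obtained with $s=2/p$, $\alpha=2p\delta$, so that $\alpha t>d$ for $\varepsilon$ small) together yield $\Vert L_{2}u\Vert_{2}\lesssim\lambda^{\beta/2-\delta+\varepsilon}\Vert\langle x\rangle^{2\delta}u\Vert$. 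Adding the $L_{1}$ and $L_{2}$ contributions finishes the proof.

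The main obstacle, and the only genuinely new point, is the verification that differentiating the Bessel-function kernel reproduces the claimed pointwise bounds $\ell_{1},\ell_{2}$ with the correct near-diagonal power $r^{1-d}$ in every dimension $d\ge 1$ — including the borderline cases $d=1,2$, where no logarithm appears, unlike for $(\lambda P_{0}+1)^{-1}$ itself — and that the unit-modulus factor $\frac{x_{j}-y_{j}}{|x-y|}$ is harmless. Once this kernel estimate is in place, everything else is a mechanical repetition of the arguments already carried out for Lemma~\ref{a2} and of Lemmas~B.2--B.3 of \cite{BoHa10_01}.
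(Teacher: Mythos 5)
Your proposal is correct and follows exactly the route the paper takes: the paper's entire proof of this lemma is the one-sentence remark that one should mimic Lemma~\ref{a2} and Lemmas~B.2--B.3 of \cite{BoHa10_01}, and that is precisely what you do, with the correct Bessel-kernel computation (the $r^{1-d}$ behaviour without a logarithm, forcing $\gamma\le 1/2$) plugged into the same $L_1/L_2$ splitting and Young--H\"older scheme.
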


\begin{lemma}\sl \label{a11}
Let $j,k \in \{1 , \ldots , d \}$, $0 \leq \beta$ and $0 \leq \delta \leq d /4$. Then, for all $\varepsilon > 0$, we have
\begin{equation*}
\big\Vert \< x \>^{\beta} (\lambda^{1/2} \partial_{j}) ( \lambda P_{0} +1)^{- 1} (\lambda^{1/2} \partial_{k}) u \big\Vert \lesssim \lambda^{\varepsilon} \big\Vert \< x \>^{\beta} u \big\Vert + \lambda^{\beta /2 - \delta + \varepsilon} \big\Vert \< x \>^{2 \delta} u \big\Vert ,
\end{equation*}
uniformly for $\lambda \geq 1$.
\end{lemma}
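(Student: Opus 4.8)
The plan is to mimic the proof of Lemma \ref{a2} (and those of Lemmas B.2--B.3 of \cite{BoHa10_01}), the only genuinely new feature being that the relevant kernel is no longer locally integrable. Set $T_{\lambda} = \lambda^{1/2} \partial_{j} ( \lambda P_{0} +1)^{-1} \lambda^{1/2} \partial_{k} = \lambda \partial_{j} \partial_{k} ( \lambda P_{0} +1)^{-1}$. Its symbol is $- \lambda \xi_{j} \xi_{k} / ( \lambda \vert \xi \vert^{2} +1)$, which under $\xi \mapsto \lambda^{-1/2} \xi$ becomes the $\lambda$-independent function $- \xi_{j} \xi_{k} / ( \vert \xi \vert^{2} +1)$; hence the Schwartz kernel of $T_{\lambda}$ is $\lambda^{-d/2} m ( \lambda^{-1/2} (x-y))$, where $m$ is the kernel of $\partial_{j} \partial_{k} ( - \Delta +1)^{-1}$. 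Exactly as in \cite[(B.5)]{BoHa10_01}, the explicit expression of this kernel through Hankel functions yields $m = c_{jk} \delta_{0} + m_{\mathrm{p}}$, where $\delta_{0}$ is the Dirac mass at $0$, $c_{jk} \in \R$ (with $c_{jk} = 0$ for $j \neq k$), and $m_{\mathrm{p}}$ is smooth off $0$ with $\vert m_{\mathrm{p}} (w) \vert \lesssim \vert w \vert^{-d}$, $\vert \nabla m_{\mathrm{p}} (w) \vert \lesssim \vert w \vert^{-d-1}$ for $\vert w \vert \leq 1$ and $\vert m_{\mathrm{p}} (w) \vert \lesssim e^{- \vert w \vert /2} \< \ln \vert w \vert \> \< w \>^{N}$ for $\vert w \vert \geq 1$. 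Writing $\widetilde{K}_{\lambda} (z) = \lambda^{-d/2} m_{\mathrm{p}} ( \lambda^{-1/2} z )$, this says that $\vert \widetilde{K}_{\lambda} (z) \vert \lesssim \vert z \vert^{-d}$ for $\vert z \vert \leq \sqrt{\lambda}$ and $\vert \widetilde{K}_{\lambda} (z) \vert \lesssim \lambda^{-d/2} e^{- \vert z \vert / (2 \sqrt{\lambda})} \< \ln ( \lambda^{-1/2} \vert z \vert ) \> \< \lambda^{-1/2} z \>^{N}$ for $\vert z \vert \geq \sqrt{\lambda}$, and $T_{\lambda} = c_{jk} \mathrm{Id} + \widetilde{K}_{\lambda} \ast \, \cdot \, $.

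I would then split at $\vert x-y \vert = 1$. Let $B_{\lambda}$ be the operator with kernel $c_{jk} \delta_{0} (x-y) + \widetilde{K}_{\lambda} (x-y) \one_{\vert x-y \vert \leq 1}$. On its support $\< x \> \approx \< y \>$, so the kernel of $\< x \>^{\beta} B_{\lambda} \< x \>^{- \beta} - B_{\lambda}$ is $( \< x \>^{\beta} \< y \>^{- \beta} - 1 ) \widetilde{K}_{\lambda} (x-y) \one_{\vert x-y \vert \leq 1} = \CO ( \vert x-y \vert^{1-d} ) \one_{\vert x-y \vert \leq 1}$, the kernel of an $L^{2}$-bounded operator uniformly in $\lambda$; since $\Vert T_{\lambda} \Vert \leq 1$ (the symbol is bounded by $1$) and $B_{\lambda} = T_{\lambda} - \widetilde{K}_{\lambda} \one_{\vert \cdot \vert > 1} \ast \, \cdot$ with $\Vert \widetilde{K}_{\lambda} \one_{\vert \cdot \vert > 1} \Vert_{L^{1}} \lesssim \log \lambda$ (the region $1 \leq \vert z \vert \leq \sqrt{\lambda}$ contributing $\int_{1}^{\sqrt{\lambda}} r^{-1} \, d r$), we get $\Vert B_{\lambda} \Vert \lesssim \lambda^{\varepsilon}$ and hence $\Vert \< x \>^{\beta} B_{\lambda} u \Vert \lesssim \lambda^{\varepsilon} \Vert \< x \>^{\beta} u \Vert$. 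For the far part, with kernel $K_{\lambda}^{\mathrm{f}} (x-y) := \widetilde{K}_{\lambda} (x-y) \one_{\vert x-y \vert > 1}$, I would use $\< x \>^{\beta} \lesssim \< y \>^{\beta} + \vert x-y \vert^{\beta}$. The $\< y \>^{\beta}$-piece is handled by Young's inequality and $\Vert K_{\lambda}^{\mathrm{f}} \Vert_{L^{1}} \lesssim \log \lambda$, giving $\lesssim \lambda^{\varepsilon} \Vert \< x \>^{\beta} u \Vert$. The $\vert x-y \vert^{\beta}$-piece is the exact analogue of the operator $L_{2}$ in the proof of Lemma \ref{a2}: with $h_{\lambda} (z) = \vert z \vert^{\beta} \vert K_{\lambda}^{\mathrm{f}} (z) \vert$, one bounds $\Vert h_{\lambda} \ast u \Vert_{2} \leq \Vert h_{\lambda} \Vert_{q} \Vert u \Vert_{p}$ with $q = \frac{d}{d - 2 ( \delta - \varepsilon )}$ and $\frac{1}{p} = \frac{3}{2} - \frac{1}{q}$, the change of variables $z = \sqrt{\lambda} \, w$ turning $\Vert h_{\lambda} \Vert_{q}$ into $\lambda^{\beta/2 - \delta + \varepsilon}$ times $\big( \int_{\vert w \vert > \lambda^{-1/2}} \vert w \vert^{\beta q} \vert m_{\mathrm{p}} (w) \vert^{q} \, d w \big)^{1/q} = \CO ( \lambda^{\varepsilon} )$, the inner singularity being integrable precisely because $\beta > 2 ( \delta - \varepsilon )$ — which is the only situation in which the second term of the statement is needed; when $\beta \leq 2 \delta$ one replaces $\vert x-y \vert^{\beta}$ by $\vert x-y \vert^{2 \delta}$ on $\{ \vert x-y \vert > 1 \}$ and runs the same argument with exponent $2 \delta$, the outcome being absorbed into the first term. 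Finally the Hölder inequality, just as in \eqref{a6}--\eqref{a8}, converts $\Vert u \Vert_{p}$ into $\Vert \< x \>^{2 \delta} u \Vert_{2}$, so this piece is $\lesssim \lambda^{\beta/2 - \delta + \varepsilon} \Vert \< x \>^{2 \delta} u \Vert$; summing the three contributions proves the lemma. (For $\beta < d/2$ one may shortcut the far analysis: $T_{\lambda} = R_{j} R_{k} \big( 1 - ( \lambda P_{0} +1)^{-1} \big)$ where $R_{\bullet} = \partial_{\bullet} ( - \Delta )^{-1/2}$ is bounded on $L^{2} ( \< x \>^{2 \beta} \, d x)$ since $\< x \>^{2 \beta}$ is then an $A_{2}$ weight, and one applies Lemma \ref{a2} with $\gamma = 0$ to the resolvent factor.)

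The main obstacle is the one just isolated: in contrast with Lemmas \ref{a2} and \ref{a10}, where the kernels are locally integrable and the estimate reduces to Young's and Hölder's inequalities applied to $\vert k_{d} \vert$, here two derivatives fall on the resolvent, the diagonal bound $\vert z \vert^{-d}$ is not integrable, and one must retain the cancellation (equivalently, the Calderón--Zygmund / $L^{2}$ structure of $T_{\lambda}$) near the diagonal rather than pass to absolute values. This is what forces the cutoff at $\vert x-y \vert = 1$ and the use of $\Vert T_{\lambda} \Vert \leq 1$; once one is past the diagonal the argument is a routine transcription of the proof of Lemma \ref{a2}. The remaining work is purely bookkeeping: checking that $\Vert h_{\lambda} \Vert_{q}$ indeed behaves like $\lambda^{\beta/2 - \delta + \varepsilon}$ over the whole range $0 \leq \delta \leq d/4$ (the same computation as \eqref{a7}, with the profile $\vert w \vert^{-d} \one_{\vert w \vert \leq 1} + e^{-\vert w \vert/2} \< w \>^{N} \one_{\vert w \vert \geq 1}$ in place of $\ell_{2}$), and that the exponents satisfy $1 \leq p \leq 2$ for $\varepsilon$ small, both exactly as in Lemma \ref{a2}.
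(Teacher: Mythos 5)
Your overall strategy is the natural one and correctly isolates the genuine novelty of this lemma compared to Lemmas~\ref{a2} and~\ref{a10}: after rescaling, the kernel of $\lambda\partial_j\partial_k(\lambda P_0+1)^{-1}$ is a Calder\'on--Zygmund kernel ($\sim|w|^{-d}$ near the diagonal plus a Dirac part), not a locally integrable function, so one cannot simply pass to absolute values and apply Young/H\"older as in the proof of Lemma~\ref{a2}. Splitting at $|x-y|=1$, controlling the near-diagonal operator $B_\lambda$ by the uniform $L^2$-bound $\|T_\lambda\|\le 1$ together with the $L^2$-boundedness of the commutator kernel $(\<x\>^\beta\<y\>^{-\beta}-1)\widetilde K_\lambda\one_{|x-y|\le 1}=\CO(|x-y|^{1-d})\one_{|x-y|\le 1}$, and then transcribing the $L_2$-computation of Lemma~\ref{a2} on the far part, is a sound and self-contained way to carry this out. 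The paper itself gives no proof of Lemma~\ref{a11}, referring only to Lemmas~B.2--B.3 of \cite{BoHa10_01} (which are also kernel-based), so a line-by-line comparison is not possible; but your route appears to capture the same computations with the one new ingredient (the diagonal cutoff and the use of $\|T_\lambda\|\le 1$) clearly identified.

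There is, however, one concrete error. In the regime $\beta\le 2\delta$ you propose to bound $|x-y|^\beta\le|x-y|^{2\delta}$ on $\{|x-y|>1\}$ and run the Young/H\"older step with exponent $2\delta$; this yields $\lambda^{\epsilon}\|\<x\>^{2\delta}u\|$, which you claim is ``absorbed into the first term'' $\lambda^\epsilon\|\<x\>^\beta u\|$. That does not hold: since $2\delta>\beta$, $\|\<x\>^{2\delta}u\|$ dominates $\|\<x\>^\beta u\|$ and the resulting bound $\lambda^\epsilon\|\<x\>^{2\delta}u\|$ is controlled neither by $\lambda^\epsilon\|\<x\>^\beta u\|$ nor by $\lambda^{\beta/2-\delta+\epsilon}\|\<x\>^{2\delta}u\|$ (the latter has a strictly smaller power of $\lambda$ when $\beta<2\delta$). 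The correct fix is to observe that $\beta\le 2\delta\le d/2$, so one may run the H\"older step with $\delta''=\beta/2\le d/4$ and a sufficiently small $\epsilon''<\min(\epsilon,\beta/2)$, giving $\lambda^{\beta/2-\delta''+\epsilon''}\|\<x\>^{2\delta''}u\|=\lambda^{\epsilon''}\|\<x\>^\beta u\|\lesssim\lambda^\epsilon\|\<x\>^\beta u\|$ (the case $\beta=0$ is trivial from Young with $\|K^{\mathrm f}_\lambda\|_{L^1}\lesssim\log\lambda$). Alternatively, your own parenthetical shortcut $T_\lambda=R_jR_k(I-(\lambda P_0+1)^{-1})$ with $A_2$-weighted boundedness of the Riesz transforms disposes of the whole range $\beta<d/2\supset\{\beta<2\delta\}$ directly, and combined with the direct argument for $\beta>2(\delta-\epsilon)$ (which covers $\beta\ge 2\delta$, including $\beta=d/2$) closes the case analysis cleanly; I would recommend structuring the proof around that dichotomy rather than the $|x-y|^\beta\mapsto|x-y|^{2\delta}$ substitution.
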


Then, using resolvent equations as in Section B.2 and Section B.3 of \cite{BoHa10_01}, one can obtain the following results. Since the proofs are similar to the ones of that paper, we omit the details here.

\begin{proposition}\sl \label{a12}
Let $0 \leq \beta$, $0 \leq \gamma \leq \min ( 1 , d/4)$ and $0 \leq \delta \leq d /4$. Then, for all $\varepsilon >0$, we have
\begin{equation*}
\big\Vert \< x \>^{\beta} ( \lambda P +1)^{- 1} u \big\Vert \lesssim \lambda^{- \gamma + \varepsilon} \big\Vert \< x \>^{\beta + 2 \gamma} u \big\Vert + \lambda^{\beta /2 - \delta + \varepsilon} \big\Vert \< x \>^{2 \delta} u \big\Vert ,
\end{equation*}
uniformly for $\lambda \geq 1$.

Let $j \in \{1 , \ldots , d \}$, $0 \leq \beta$, $0 \leq \gamma \leq \min ( 1/2 , d/4)$ and $0 \leq \delta \leq d /4$. Then, for all $\varepsilon > 0$, we have
\begin{align*}
\big\Vert \< x \>^{\beta} ( \lambda P +1)^{- 1} (\lambda^{1/2} \widetilde{\partial}_{j}^{*} ) u \big\Vert & \lesssim \lambda^{- \gamma + \varepsilon} \big\Vert \< x \>^{\beta + 2 \gamma} u \big\Vert + \lambda^{\beta /2 - \delta + \varepsilon} \big\Vert \< x \>^{2 \delta} u \big\Vert ,  \\
\big\Vert \< x \>^{\beta} (\lambda^{1/2} \widetilde{\partial}_{j} ) ( \lambda P +1)^{- 1} u \big\Vert & \lesssim \lambda^{- \gamma + \varepsilon} \big\Vert \< x \>^{\beta + 2 \gamma} u \big\Vert + \lambda^{\beta /2 - \delta + \varepsilon} \big\Vert \< x \>^{2 \delta} u \big\Vert ,
\end{align*}
uniformly for $\lambda \geq 1$.

Let $j,k \in \{1 , \ldots , d \}$, $0 \leq \beta$ and $0 \leq \delta \leq d /4$. Then, for all $\varepsilon > 0$, we have
\begin{equation*}
\big\Vert \< x \>^{\beta} (\lambda^{1/2} \widetilde{\partial}_{j} ) ( \lambda P +1)^{- 1} (\lambda^{1/2} \widetilde{\partial}_{k}^{*} ) u \big\Vert \lesssim \lambda^{\varepsilon} \big\Vert \< x \>^{\beta} u \big\Vert + \lambda^{\beta /2 - \delta + \varepsilon} \big\Vert \< x \>^{2 \delta} u \big\Vert ,
\end{equation*}
uniformly for $\lambda \geq 1$.
\end{proposition}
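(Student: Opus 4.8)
The plan is to deduce the estimates for $P$ from the corresponding estimates for the flat Laplacian $P_{0} = - \Delta$, namely Lemmas \ref{a2}, \ref{a10} and \ref{a11}, by the perturbation argument based on the second resolvent identity used in Sections B.2 and B.3 of \cite{BoHa10_01}.

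First I would record the algebraic structure of the perturbation. Writing $\widetilde{\partial}_{j} = \partial_{j} b$ one has $P = \widetilde{\partial}^{*} G \widetilde{\partial}$, so that $\lambda ( P - P_{0} )$ is a finite sum of operators of the form $\widetilde{r}^{*} \, m \, \widetilde{r}$, where each $\widetilde{r}$ (resp. $\widetilde{r}^{*}$) is one of $\lambda^{1/2} \widetilde{\partial}_{j}$ or $\lambda^{1/2} \< x \>^{-1}$ (resp. $\lambda^{1/2} \widetilde{\partial}_{j}^{*}$ or $\lambda^{1/2} \< x \>^{-1}$) and $m$ is a smooth function with $\vert \partial^{\alpha}_{x} m \vert \lesssim \< x \>^{- \rho - \vert \alpha \vert}$; this decay is precisely what \eqref{a4} provides, the lower order contributions being spent to form the factors $\lambda^{1/2} \< x \>^{-1}$. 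Next I would establish the crude a priori bounds that follow only from the ellipticity \eqref{a3} and the spectral theorem: since $G \geq C I_{d}$ gives $\lambda \widetilde{\partial}^{*} \widetilde{\partial} \leq C^{-1} ( \lambda P + 1 )$ as quadratic forms, one gets $\Vert ( \lambda P + 1 )^{-1} \Vert \leq 1$, $\Vert ( \lambda^{1/2} \widetilde{\partial}_{j} ) ( \lambda P + 1 )^{-1} \Vert \lesssim 1$, $\Vert ( \lambda P + 1 )^{-1} ( \lambda^{1/2} \widetilde{\partial}_{j}^{*} ) \Vert \lesssim 1$ and, by a further quadratic form computation, $\Vert ( \lambda^{1/2} \widetilde{\partial}_{j} ) ( \lambda P + 1 )^{-1} ( \lambda^{1/2} \widetilde{\partial}_{k}^{*} ) \Vert \lesssim 1$, all uniform in $\lambda \geq 1$. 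These play the role of the base case.

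Then I would prove all the estimates of the proposition simultaneously, by induction on the weight $\beta$, advancing in steps of size $\rho$. For $\beta$ in the current range I would iterate $( \lambda P + 1 )^{-1} = ( \lambda P_{0} + 1 )^{-1} - ( \lambda P_{0} + 1 )^{-1} \lambda ( P - P_{0} ) ( \lambda P + 1 )^{-1}$ a fixed number $N$ of times, with $N$ large in terms of $\beta / \rho$ and $\gamma / \rho$. In the resulting finite expansion, the terms built only from $( \lambda P_{0} + 1 )^{-1}$ are handled with Lemmas \ref{a2}, \ref{a10}, \ref{a11} and their adjoints: the leftmost free resolvent (or a few of them) produces the gain $\lambda^{- \gamma + \varepsilon}$, the extra weight $\< x \>^{2 \gamma}$ it generates being absorbed by the $\< x \>^{- \rho}$ of the adjacent functions $m$ (using $\gamma_{i} \leq \rho / 2$ with $\sum \gamma_{i} = \gamma$), while the remaining free resolvents are used with matched weights and contribute only $\lambda^{\varepsilon}$; the term $\lambda^{\beta / 2 - \delta + \varepsilon} \Vert \< x \>^{2 \delta} u \Vert$ comes from the second term in the free Lemmas and simply reproduces itself. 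The single remainder term contains one factor $( \lambda P + 1 )^{-1}$ which, by the choice of $N$ and the weight lowering effect of the factors $m$, acts between weighted spaces controlled either by the a priori bounds or by the induction hypothesis. The derivative variants (the three displayed estimates) are included in the same induction, since under the resolvent identity they reduce to expressions of the same shape together with free operator estimates of the same shape, their base case being again the a priori bounds.

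The main obstacle is the bookkeeping: one has to follow, through the finite expansion, the polynomial weights $\< x \>^{\bullet}$, the powers $\lambda^{\bullet}$ and the accumulated $\varepsilon$-losses, checking that the $\< x \>^{- \rho}$ factors always suffice to keep every intermediate term inside the induction hypothesis, that the $\lambda^{- \gamma_{i} + \varepsilon}$ combine to the required $\lambda^{- \gamma + \varepsilon}$ with $N$ finite, and that the low order part of the perturbation, together with the constraints $\gamma \leq \min(1, d/4)$ and $\delta \leq d/4$ in low dimensions, is handled correctly. Since every step is entirely parallel to Sections B.2 and B.3 of \cite{BoHa10_01}, now with the sharper free input of Lemma \ref{a2}, the details can be omitted.
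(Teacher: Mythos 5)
Your proposal is correct and takes essentially the same route the paper indicates: it reduces Proposition~\ref{a12} to the free estimates of Lemmas~\ref{a2}, \ref{a10}, \ref{a11} by iterating the second resolvent identity, using the structure $\lambda(P-P_{0})=\sum \widetilde{r}^{*} m\, \widetilde{r}$ with $\langle x\rangle^{-\rho}$-decaying coefficients together with the a~priori quadratic-form bounds, which is precisely the argument from Sections~B.2 and~B.3 of \cite{BoHa10_01} that the paper invokes (and whose bookkeeping it likewise omits).
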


\begin{remark}\sl \label{a54}
In the previous proposition, we can replace $( \lambda P + 1)^{-1}$ by $( \lambda P - z)^{-1}$ for $z$ in a compact set of $\C$ and $\im z \neq 0$. In that case, a loss of the form $\vert \im z \vert^{- C}$, $C>0$ appears in the estimates (see Remark B.9 of \cite{BoHa10_01}).
\end{remark}

\begin{proposition}\sl \label{a13}
Let $\chi \in C^{\infty}_{0} ( \R )$, $j,k \in \{1 , \ldots , d \}$, $0 \leq \beta$ and $0 \leq \gamma , \delta \leq d/4$. Then, for all $\varepsilon > 0$, we have
\begin{gather*}
\big\Vert \< x \>^{\beta} \chi ( \lambda P ) u \big\Vert \lesssim \lambda^{- \gamma + \varepsilon} \big\Vert \< x \>^{\beta + 2 \gamma} u \big\Vert + \lambda^{\beta /2 - \delta + \varepsilon} \big\Vert \< x \>^{2 \delta} u \big\Vert ,  \\
\big\Vert \< x \>^{\beta} (\lambda^{1/2} \widetilde{\partial}_{j}) \chi ( \lambda P ) u \big\Vert \lesssim \lambda^{- \gamma + \varepsilon} \big\Vert \< x \>^{\beta + 2 \gamma} u \big\Vert + \lambda^{\beta /2 - \delta + \varepsilon} \big\Vert \< x \>^{2 \delta} u \big\Vert ,  \\
\big\Vert \< x \>^{\beta} \chi ( \lambda P ) (\lambda^{1/2} \widetilde{\partial}_{j}^{*} ) u \big\Vert \lesssim \lambda^{- \gamma + \varepsilon} \big\Vert \< x \>^{\beta + 2 \gamma} u \big\Vert + \lambda^{\beta /2 - \delta + \varepsilon} \big\Vert \< x \>^{2 \delta} u \big\Vert ,  \\
\big\Vert \< x \>^{\beta} (\lambda^{1/2} \widetilde{\partial}_{j}) \chi ( \lambda P ) ( \lambda^{1/2} \widetilde{\partial}_{k}^{*} ) u \big\Vert \lesssim \lambda^{- \gamma + \varepsilon} \big\Vert \< x \>^{\beta + 2 \gamma} u \big\Vert + \lambda^{\beta /2 - \delta + \varepsilon} \big\Vert \< x \>^{2 \delta} u \big\Vert ,
\end{gather*}
uniformly for $\lambda \geq 1$.
\end{proposition}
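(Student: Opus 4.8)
The plan is to reduce everything to the resolvent estimates of Proposition \ref{a12} via the Helffer--Sj\"ostrand formula, but with \emph{several} resolvent factors so as to cover the full range $\gamma \le d/4$ (a single resolvent only gains powers up to $\min(1,d/4)$, while $\chi(\lambda P)$, being compactly supported, tolerates arbitrarily many resolvents). Given $\chi \in C^{\infty}_{0}(\R)$, I would fix an integer $N$ (large, depending only on $d$) and factor $\chi = \psi_{1} \psi_{2} \cdots \psi_{N}$ with $\psi_{1} = \chi$ and $\psi_{2} , \dots , \psi_{N} \in C^{\infty}_{0}(\R)$ all equal to $1$ on a neighbourhood of $\supp \chi$. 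Since the $\psi_{i}(\lambda P)$ commute, applying the Helffer--Sj\"ostrand formula to each factor gives
\[
\chi(\lambda P) = \frac{(-1)^{N}}{\pi^{N}} \int_{\C^{N}} \Big( \prod_{i=1}^{N} \overline{\partial}\, \widetilde{\psi}_{i}(z_{i}) \Big) \Big( \prod_{i=1}^{N} (\lambda P - z_{i})^{-1} \Big) L(dz_{1}) \cdots L(dz_{N}),
\]
where each $\widetilde{\psi}_{i} \in C^{\infty}_{0}(\C)$ is an almost analytic extension of $\psi_{i}$, supported in a fixed compact of $\C$, chosen so that $\big| \overline{\partial}\, \widetilde{\psi}_{i}(z) \big| \lesssim | \im z |^{M}$ with $M$ as large as needed.

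Next I would insert the relevant one-sided factors at the two ends of the product of resolvents ($\langle x \rangle^{\beta}$ always, and $\lambda^{1/2} \widetilde{\partial}_{j}$ on the left and/or $\lambda^{1/2} \widetilde{\partial}_{k}^{*}$ on the right, according to which of the four estimates is considered), and interleave dummy factors $\langle x \rangle^{\mu_{i}} \langle x \rangle^{-\mu_{i}}$ between consecutive resolvents. One then applies Proposition \ref{a12}, in the form of Remark \ref{a54} valid for $(\lambda P - z)^{-1}$ with $z$ in the fixed compact carrying the supports of the $\widetilde{\psi}_{i}$, successively at each of the $N$ resolvents: the first and last resolvents carry the derivative factors, so the one-sided-derivative estimates of Proposition \ref{a12} are used there (admissible gain $\le \min(1/2,d/4)$), while the interior resolvents are treated by the plain estimate (admissible gain $\le \min(1,d/4)$). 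Choosing a splitting $\gamma = \gamma_{1} + \dots + \gamma_{N}$ with each $0 \le \gamma_{i} \le \min(1/2,d/4)$ --- possible once $N$ is large since $\gamma \le d/4$ --- and adding $2\gamma_{i}$ to the running weight at each step, the "main" term picks up the gain $\lambda^{-\gamma + \varepsilon}$ together with the weight $\langle x \rangle^{\beta + 2\gamma}$, while the alternative branch of the two-term estimate produces the term $\lambda^{\beta/2 - \delta + \varepsilon} \Vert \langle x \rangle^{2\delta} u \Vert$. Each application of Remark \ref{a54} costs a factor $| \im z_{i} |^{-C}$ with $C$ uniform in $\lambda \ge 1$ and in $z_i$ over the fixed compact; taking $M \ge C$ in the extensions makes all the $z_{i}$-integrals absolutely convergent against $L(dz_{i})$, which yields the four stated estimates, uniformly in $\lambda \ge 1$.

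The point requiring genuine care is the bookkeeping of the $2^{N}$ terms produced by iterating the two-term estimate of Proposition \ref{a12}: one must verify that each of them is dominated by $\lambda^{-\gamma + \varepsilon} \Vert \langle x \rangle^{\beta + 2\gamma} u \Vert + \lambda^{\beta/2 - \delta + \varepsilon} \Vert \langle x \rangle^{2\delta} u \Vert$. The structural fact that makes this work is that the $\delta$-branch is \emph{terminal}: once a bound of the form $\lambda^{\beta'/2 - \delta' + \varepsilon} \Vert \langle x \rangle^{2\delta'} \cdot \Vert$ has been invoked at some step, the subsequent steps only raise the $\lambda$-exponent by at most $1/2$ per unit of weight --- exactly the rate $\beta'/2$ that is allowed --- so the already accumulated gains $\lambda^{-\gamma_{1} - \dots - \gamma_{m-1}}$ are absorbed and the remaining gains serve to reach the exponent $-\delta$ rather than $-\gamma$. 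Hence the iteration should be organized so that the $\delta$-branch is taken at most once, and the full gain $\gamma = \sum \gamma_{i}$ is realized only along the all-"main"-branch path. The existence of almost analytic extensions with $\overline{\partial}\, \widetilde{\psi}_{i}$ vanishing to prescribed order on $\R$, and the uniformity of $C$ in Remark \ref{a54}, are standard; note also that no hypothesis on $\supp \chi$ (in particular whether it contains $0$) is needed, since all the resolvent bounds used are uniform down to zero energy.
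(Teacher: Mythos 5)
Your reduction of Proposition \ref{a13} to Proposition \ref{a12} (via Remark \ref{a54}) is correct, and the two points you identify as the crux of the matter are indeed the crux of the matter: (1) a single resolvent factor only gains $\lambda^{-\min(1,d/4)}$ (and only $\lambda^{-\min(1/2,d/4)}$ with a one-sided derivative, nothing at all with two derivatives), so one must iterate on several resolvents to cover $\gamma\le d/4$ and to separate $\widetilde{\partial}_j$ from $\widetilde{\partial}_k^*$; and (2) the $\delta$-branch exactly cancels the accumulated $\gamma$-gains, since after $m-1$ main steps the running weight is $\beta_{m-1}=\beta+2(\gamma_1+\cdots+\gamma_{m-1})$ and $\lambda^{-(\gamma_1+\cdots+\gamma_{m-1})}\cdot\lambda^{\beta_{m-1}/2-\delta}=\lambda^{\beta/2-\delta}$, after which one may take $\gamma_i=0$, $\delta_i=\delta$ for the remaining resolvents at a cost of only $\lambda^{\varepsilon}$ per step. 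The paper itself gives no proof of this proposition, deferring to Sections B.2--B.3 of \cite{BoHa10_01}, so a direct comparison of routes is not possible; but your scheme is consistent with the techniques the paper does display elsewhere. One small variant worth noting, closer to the style of \eqref{a25} and avoiding an $N$-fold Helffer--Sj\"ostrand integral: write $\chi(\lambda P)=(\lambda P+1)^{-(N-1)}\widehat{\chi}(\lambda P)$ with $\widehat{\chi}(x)=(x+1)^{N-1}\chi(x)\in C_0^\infty(\R)$, iterate Proposition \ref{a12} directly on the $N-1$ explicit resolvent factors, and apply a single Helffer--Sj\"ostrand expansion (with Remark \ref{a54}) only for $\widehat{\chi}(\lambda P)$ at the end. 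This does the same bookkeeping but keeps only one complex integral. One very minor wording point: the iteration does not let you ``organize'' which branch is taken --- both branches always appear --- but the parameter choices you describe ($\gamma_i=0$, $\delta_i=\delta$ once a $\delta$-branch has occurred) make all paths with a $\delta$-branch at step $m$ collapse to the same term $\lambda^{\beta/2-\delta+\varepsilon}\Vert\langle x\rangle^{2\delta}u\Vert$, so the $2^N$ terms reduce to $N+1$, which is what you need.
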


\begin{remark}\sl \label{remprop}
If in addition we have $\gamma + \beta /2 \leq d /4$, then we can take $\delta = \beta /2 + \gamma$ in the above propositions and the second term in the right hand side of the estimates disappears. For example, we have
\begin{equation*}
\big\Vert \< x \>^{\beta} (\lambda P+1)^{-1} u \big\Vert \lesssim \lambda^{- \gamma + \varepsilon} \big\Vert \< x \>^{\beta +2 \gamma} u \big\Vert .
\end{equation*}
\end{remark}

We will also need the following

\begin{lemma}\sl \label{reshalf}
Assume $d \geq 2$ and let $0 \leq \mu \leq 1$. Then, for all $\varepsilon > 0$, we have
\begin{equation*}
\big\Vert ( \lambda P+1)^{-1/2} \<x\>^{- \mu} \big\Vert \lesssim \lambda^{- \mu/2 + \varepsilon} ,
\end{equation*}
uniformly for $\lambda \geq 1$.
\end{lemma}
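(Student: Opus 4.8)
The plan is to deduce the bound on the fractional power $(\lambda P+1)^{-1/2}$ from the already proven bound on the integer power $(\lambda P+1)^{-1}$ (Proposition \ref{a12}), by means of the subordination formula
\[
(\lambda P+1)^{-1/2} = \frac{1}{\pi} \int_{0}^{+\infty} s^{-1/2} (\lambda P + 1 + s)^{-1} \, d s ,
\]
which follows from the spectral theorem and the elementary identity $\int_{0}^{+\infty} s^{-1/2} (s+a)^{-1} \, d s = \pi a^{-1/2}$ for $a>0$. The integral converges in operator norm since $\Vert s^{-1/2} (\lambda P+1+s)^{-1} \Vert \leq s^{-1/2} (1+s)^{-1}$, which is integrable on $]0, +\infty[$. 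The case $\mu = 0$ is trivial; so we assume $0 < \mu \leq 1$, and we may also assume $0 < \varepsilon < \mu /2$, the statement for larger $\varepsilon$ being weaker.

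The core step is to rescale each resolvent appearing in the integral. Setting $\lambda_{s} = \lambda / (1+s)$, one has $(\lambda P + 1 + s)^{-1} = (1+s)^{-1} (\lambda_{s} P + 1)^{-1}$. When $\lambda_{s} \geq 1$, that is $s \leq \lambda - 1$, I would apply Proposition \ref{a12} combined with Remark \ref{remprop}, taking $\beta = 0$ and $\gamma = \delta = \mu / 2$; this is admissible precisely because $d \geq 2$ forces $\mu/2 \leq 1/2 \leq \min(1, d/4)$, and it yields, uniformly in $\lambda_{s} \geq 1$,
\[
\big\Vert (\lambda_{s} P + 1)^{-1} \langle x \rangle^{-\mu} \big\Vert \lesssim \lambda_{s}^{- \mu /2 + \varepsilon}
\]
(apply the inequality to $u = \langle x \rangle^{-\mu} v$, so that the weight $\langle x \rangle^{2\gamma} u = v$). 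When $\lambda_{s} \leq 1$ the trivial bound $\Vert (\lambda_{s} P + 1)^{-1} \langle x \rangle^{-\mu} \Vert \leq 1 \leq \lambda_{s}^{-\mu/2+\varepsilon}$ holds, the exponent being nonpositive. In either regime we therefore obtain $\Vert (\lambda_{s} P + 1)^{-1} \langle x \rangle^{-\mu} \Vert \lesssim \lambda^{-\mu/2+\varepsilon} (1+s)^{\mu/2 - \varepsilon}$.

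Inserting this into the subordination formula gives
\[
\big\Vert (\lambda P + 1)^{-1/2} \langle x \rangle^{-\mu} \big\Vert \lesssim \lambda^{- \mu /2 + \varepsilon} \int_{0}^{+\infty} s^{-1/2} (1+s)^{\mu/2 - 1 - \varepsilon} \, d s ,
\]
and the final point is to verify that this integral is a finite constant: the integrand behaves like $s^{-1/2}$ as $s \to 0^{+}$ and like $s^{\mu/2 - 3/2 - \varepsilon}$ as $s \to +\infty$, with $\mu/2 - 3/2 - \varepsilon < -1$ since $\mu \leq 1$. The bound being uniform in $\lambda \geq 1$, the lemma follows. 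The only substantive ingredients are the admissibility of the exponent $\gamma = \mu/2$ in Proposition \ref{a12}, which is where the hypothesis $d \geq 2$ enters, and the convergence at infinity of the last integral, which is where $\mu \leq 1$ enters; the rest is routine bookkeeping, so I do not anticipate any real obstacle.
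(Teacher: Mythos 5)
Your proof is correct, and it takes a genuinely different route from the one in the paper. The paper proceeds by a dyadic decomposition of the spectral variable: it writes the identity as $f(\lambda P) + \sum_{\sigma \text{ dyadic}} g^{2}(\sigma P) + h(P)$, applies Proposition \ref{a13} (the weighted estimate for spectral cutoffs) to each block, and sums a geometric-type series $\sum \sigma^{-\varepsilon/2}$. You instead invoke the subordination formula $(\lambda P+1)^{-1/2} = \pi^{-1}\int_0^\infty s^{-1/2}(\lambda P + 1 + s)^{-1}\,ds$, rescale the shifted resolvent to $\lambda_s = \lambda/(1+s)$, and apply Proposition \ref{a12} (the resolvent estimate) with $\beta=0$, $\gamma=\delta=\mu/2$ — admissible exactly because $d\ge 2$ gives $\mu/2 \le 1/2 \le \min(1,d/4)$; the trivial bound covers $\lambda_s \le 1$ once one normalises $\varepsilon < \mu/2$, and convergence at $s=\infty$ of $\int s^{-1/2}(1+s)^{\mu/2-1-\varepsilon}\,ds$ is where $\mu\le 1$ enters. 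Both arguments are sound; yours has the merit of needing only the resolvent estimate (Proposition \ref{a12}) rather than the spectral-cutoff estimate (Proposition \ref{a13}), and of replacing the dyadic bookkeeping by a single well-known integral identity, while the paper's version is stylistically consistent with the Littlewood--Paley decompositions used elsewhere in the article. As a small remark, your reduction does not even require Remark \ref{remprop}: with $\beta=0$ and $\gamma=\delta=\mu/2$ the two terms on the right-hand side of Proposition \ref{a12} coincide, so the bound $\lambda^{-\mu/2+\varepsilon}\|\<x\>^{\mu}u\|$ is immediate.
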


\begin{proof}
We make a dyadic decomposition of the energies between $\lambda^{-1}$ and $1$.  There exist $f \in C^{\infty}_{0} ( \R )$, $g \in C^{\infty}_{0} ( ]0, + \infty [)$ and $h \in C^{\infty}(]0, + \infty [)$ such that
\begin{equation*}
f ( \lambda x) + \sum_{\fract{\mu \text{ dyadic}}{1 < \mu < \lambda}} g^{2} ( \mu x) + h(x) =1
\end{equation*}
for all $x \in [0, + \infty [$. Then, Proposition \ref{a13} and the spectral theorem give
\begin{align}
\big\Vert ( \lambda P +1 )^{-1/2} \< x \>^{- \mu} \big\Vert\leq{}& \big\Vert ( \lambda P +1 )^{-1/2} f ( \lambda P) \< x \>^{- \mu} \big\Vert + \sum_{\fract{\mu \text{ dyadic}}{1 < \mu < \lambda}} \big\Vert ( \lambda P +1 )^{-1/2} g^{2} ( \mu P) \< x \>^{- \mu} \big\Vert \nonumber \\
&+ \big\Vert ( \lambda P +1 )^{-1/2} h (P) \< x \>^{- \mu} \big\Vert   \nonumber \\
\leq{}& \big\Vert f ( \lambda P) \< x \>^{- \mu} \big\Vert + \sum_{\fract{\mu \text{ dyadic}}{1 < \mu < \lambda}} \big\Vert ( \lambda P +1 )^{- \mu /2 + \varepsilon} g ( \mu P) \big\Vert \big\Vert g ( \mu P) \< x \>^{- \mu} \big\Vert \nonumber \\
&+ \big\Vert ( \lambda P +1 )^{- \mu /2 + \varepsilon} h (P) \big\Vert  \nonumber \\
\lesssim{}& \lambda^{- \mu /2 + \varepsilon} + \sum_{\fract{\mu \text{ dyadic}}{1 < \mu < \lambda}} ( \lambda \mu^{-1})^{- \mu /2 + \varepsilon} \mu^{- \mu /2 + \varepsilon /2} + \lambda^{- \mu /2 + \varepsilon}  \nonumber \\
\lesssim{}& \lambda^{- \mu /2 + \varepsilon} \bigg( 1 + \sum_{\fract{\mu \text{ dyadic}}{1 < \mu < \lambda}} \mu^{- \varepsilon /2} \bigg) \lesssim \lambda^{- \mu /2 + \varepsilon},
\end{align}
which finishes the proof.
\end{proof}

{\sl Acknowledgments.} The authors were partially supported by ANR-08-BLAN-0228. The first author thanks the Bernoulli Center, EPFL, Lausanne, for a partial support during the program ``{\it Spectral and dynamical properties of quantum Hamiltonians}''.

\bibliographystyle{amsplain}
\providecommand{\bysame}{\leavevmode\hbox to3em{\hrulefill}\thinspace}
\providecommand{\MR}{\relax\ifhmode\unskip\space\fi MR }
\providecommand{\MRhref}[2]{%
  \href{http://www.ams.org/mathscinet-getitem?mr=#1}{#2}
}
\providecommand{\href}[2]{#2}

\end{document}